\newtheorem{theorem}{Theorem}[section]
\newtheorem{proposition}[theorem]{Proposition}
\newtheorem{lemma}[theorem]{Lemma}
\theoremstyle{definition}
\newtheorem{definition}[theorem]{Definition}
\newtheorem{example}[theorem]{Example}
\theoremstyle{remark}
\newtheorem{remark}[theorem]{Remark}
\newtheorem*{notation}{Notation}
\newtheorem*{acknowledgements}{Acknowledgements}
\numberwithin{equation}{section}
\def\enddoc{

\addtolength{\oddsidemargin}{-17pt}
\addtolength{\evensidemargin}{-17pt}
\addtolength{\topmargin}{30pt}

\newcommand{\ind}[1]{\mathbf{1}_{#1}}
\newcommand{\id}[1]{\mathrm{id}_{#1}}
\newcommand{\capa}{\mathop{\mathrm{cap}}\nolimits}
\newcommand{\Capa}{\mathop{\mathrm{Cap}}\nolimits}
\newcommand{\diam}{\mathop{\mathrm{diam}}\nolimits}
\newcommand{\dist}{\mathop{\mathrm{dist}}\nolimits}
\newcommand{\dom}{\mathop{\mathrm{dom}}\nolimits}
\newcommand{\interior}{\mathop{\mathrm{int}}\nolimits}
\newcommand{\tr}{\mathop{\mathrm{tr}}\nolimits}
\newcommand{\image}{\mathop{\mathrm{Im}}\nolimits}
\newcommand{\rank}{\mathop{\mathrm{rank}}\nolimits}
\newcommand{\osc}{\mathop{\mathrm{Osc}}}
\newcommand{\supp}{\mathop{\mathrm{supp}}\nolimits}

\newcommand{\sigmafield}{\mathscr}
\newcommand{\functionspace}{\mathcal}
\newcommand{\cemetery}{\Delta}

\begin{document}

\title[Upper bounds of heat kernels for diffusions via Dynkin-Hunt formula]%
{Localized upper bounds of heat kernels for diffusions via a multiple Dynkin-Hunt formula}

\author{Alexander Grigor'yan}
\address{Fakult\"{a}t f\"{u}r Mathematik, Universit\"{a}t Bielefeld, Postfach 10 01 31, 33501 Bielefeld, Germany}
\curraddr{}
\email{grigor@math.uni-bielefeld.de}
\thanks{AG was supported by SFB 701 of the German Research Council (DFG)}

\author{Naotaka Kajino}
\address{Department of Mathematics, Graduate School of Science, Kobe University, Rokkodai-cho 1-1, Nada-ku, 657-8501 Kobe, Japan}
\curraddr{}
\email{nkajino@math.kobe-u.ac.jp}
\thanks{NK was supported by SFB 701 of the German Research Council (DFG)
and by JSPS KAKENHI Grant Number 26287017}

\subjclass[2010]{Primary 35K08, 60J35, 60J60; Secondary 28A80, 31C25, 60J45.}

\keywords{Hunt process, multiple Dynkin-Hunt formula, diffusion, heat kernel,
sub-Gaussian upper bound, exit probability estimate}

\date{July 3, 2015}

\dedicatory{}

\begin{abstract}
We prove that for a general diffusion process, certain assumptions on its behavior
\emph{only within a fixed open subset} of the state space imply the existence and
sub-Gaussian type off-diagonal upper bounds of the \emph{global} heat kernel on
the fixed open set. The proof is mostly probabilistic and is based on a
seemingly new formula, which we call a \emph{multiple Dynkin-Hunt formula},
expressing the transition function of a Hunt process in terms of that of the
part process on a given open subset. This result has an application to heat kernel
analysis for the \emph{Liouville Brownian motion}, the canonical diffusion in a
certain random geometry of the plane induced by a (massive) Gaussian free field.
\end{abstract}

\maketitle

\tableofcontents

\section{Introduction}\label{sec:intro}

Let $(M,d)$ be a locally compact separable metric space equipped with
a $\sigma$-finite Borel measure $\mu$ and let
$X=\bigl(\{X_{t}\}_{t\in[0,\infty]},\{\mathbb{P}_{x}\}_{x\in M_{\cemetery}}\bigr)$
be a diffusion on $M$, where $M_{\cemetery}:=M\cup\{\cemetery\}$
denotes the one-point compactification of $M$.
The themes of this paper are existence of the heat kernel
$p_{t}(x,y)$ (the transition density of $X$ with respect to $\mu$)
and off-diagonal upper bounds of $p_{t}(x,y)$ of the form
\begin{equation}\label{eq:UHK-intro}
p_{t}(x,y)\leq F_{t}(x,y)\exp\biggl(-\gamma\Bigl(\frac{d(x,y)^{\beta}}{t}\Bigr)^{\frac{1}{\beta-1}}\biggr)
\end{equation}
for some $\gamma\in(0,\infty)$, $\beta\in(1,\infty)$ and a positive function $F_{t}(x,y)$.
In most typical cases, $F_{t}(x,y)$ is given either by the power function
$F_{t}(x,y)=c_{0}t^{-\alpha}$ for some $c_{0},\alpha\in(0,\infty)$ or by the volume function
\begin{equation}\label{eq:F-vol-intro}
F_{t}(x,y)=c_{0}\mu\bigl(B(x,t^{1/\beta})\bigr)^{-1/2}\mu\bigl(B(y,t^{1/\beta})\bigr)^{-1/2},
\end{equation}
where $\beta$ is as in \eqref{eq:UHK-intro} and
$B(x,r):=\{y\in M\mid d(x,y)<r\}$ for $(x,r)\in M\times(0,\infty)$.

For $\beta=2$, \eqref{eq:UHK-intro} is called a \emph{Gaussian} upper bound and
has been extensively studied in the classical setting where $M$ is a complete
Riemannian manifold. For example, when $M$ has non-negative Ricci curvature,
the Gaussian bound \eqref{eq:UHK-intro} under \eqref{eq:F-vol-intro},
together with a matching lower bound, has been proved
for the Brownian motion on $M$ by Li and Yau \cite{LiYau:ActaMath86} and for
uniformly elliptic diffusions on $M$ by Saloff-Coste \cite{S-C:JDG92}. It is
also known by the results of Grigor'yan \cite{Gri:MatSb91,Gri:RevMatIberoam94}
and Saloff-Coste \cite{S-C:IMRN92,S-C:JDG92} that these bounds are
characterized or implied by certain scale-invariant functional inequalities,
such as Poincar\'{e}, local Sobolev and relative Faber-Krahn inequalities,
in conjunction with the volume doubling property
\begin{equation}\label{eq:VD-intro}
0<\mu(B(x,2r))\leq c_{\mathrm{vd}}\mu(B(x,r))<\infty.
\end{equation}
Saloff-Coste's proofs have developed from Moser's iteration argument in
\cite{Moser:CPAM64,Moser:CPAM71} combined with Davies' method in \cite{Dav:AJM87}
for making the constant $\gamma$ in \eqref{eq:UHK-intro} arbitrarily close to $\frac{1}{4}$,
and have been extended by Sturm \cite{Sturm:ALDS2,Sturm:ALDS3} to the framework of
a general strongly local regular Dirichlet space whose associated intrinsic metric
is non-degenerate. This last property basically means that for each relatively
compact ball $B(x,r)$ there exists a cutoff function $\varphi=\varphi_{x,r}$
satisfying $\ind{B(x,r)}\leq\varphi\leq\ind{B(x,2r)}$ and
``$|\nabla\varphi|\leq r^{-1}$" $\mu$-a.e., which makes it possible to apply
the methods developed for Riemannian manifolds to an abstract setting.
It should also be noted that such cutoff functions allow us to deduce
\emph{localized} Gaussian bounds from \emph{localized} assumptions; for example,
a Gaussian upper bound of $p_{t}(x,y)$ for \emph{given} $x,y\in M$ is implied by
a local Sobolev inequality \emph{on two balls $B(x,r_{x})$ and $B(y,r_{y})$ alone}.
See \cite{Dav:HK,Gri:HKAnalysisManifolds,S-C:AspectsSobolev,Sturm:ALDS2,Sturm:ALDS3}
and references therein for further details of Gaussian bounds.

The values of $\beta$ \emph{greater than $2$} naturally appear in the study of
diffusions on fractals. Barlow and Perkins have proved in their seminal work
\cite{BP} that the canonical diffusion on the two-dimensional Sierpi\'{n}ski
gasket satisfies \eqref{eq:UHK-intro} with \eqref{eq:F-vol-intro} and
$\beta=\log_{2}5>2$ as well as a matching lower bound, which indicate a lower
diffusion speed of the heat and are thereby called \emph{sub}-Gaussian bounds.
Such two-sided bounds with $\beta>2$ have been established also for nested fractals
by Kumagai \cite{Kum:nested}, affine nested fractals by Fitzsimmons, Hambly and
Kumagai \cite{FHK:affinenested} and Sierpi\'{n}ski carpets by Barlow and Bass
\cite{BB92,BB99} (see also \cite{BBKT}), which in turn have motivated a number
of recent studies on characterizing sub-Gaussian bounds, like
\cite{BBK:CS,BGK:PHI,GriHu:HKGreen,GriHuLau:GenCap,GT,Kig:RFQS,Kum:RF} for two-sided
and \cite{AB,Gri:HKfractal,GriHu:Upper,GriHuLau:GenCap,Kig:localNash} for upper.
A huge technical difficulty in the sub-Gaussian case is that, even though
we can construct good cutoff functions similar to the Gaussian case
\emph{a posteriori on the basis of sub-Gaussian bounds}
as has been done in \cite{AB,BBK:CS,GriHuLau:GenCap},
it is hopeless to have such functions \emph{a priori}; indeed, the natural
distance function may well even \emph{not} belong to the domain of the
Dirichlet form as proved in \cite[Proposition A.3]{K:aghSG} for
the two-dimensional Sierpi\'{n}ski gasket. Therefore in getting
sub-Gaussian bounds, practically we cannot use analytic methods developed for
Gaussian bounds, and most of the existent researches have made indispensable
use of arguments on the diffusion process instead.

While calculations with the diffusion enable us to estimate various
analytic quantities through probabilistic considerations, it is not clear
whether they admit localized implications similar to the analytic proofs
of Gaussian bounds, and there seems to be no result in the literature stating
such implications explicitly. In fact, unless the diffusion $X$ has a certain
prescribed local regularity property as in the case of Riemannian manifolds and
that of resistance forms treated in \cite{Kig:RFQS}, localizing \emph{existence} results
for the heat kernel $p_{t}(x,y)$ is already highly non-trivial, since its existence
on a given subset could be prevented by the possibly very bad behavior of the
diffusion outside the subset. These issues of localization have been carefully
avoided in the known probabilistic derivations of sub-Gaussian heat kernel bounds,
either by assuming as in \cite{Kig:localNash} the ultracontractivity of
the heat semigroup and thereby the existence and boundedness of
the heat kernel $p_{t}(x,y)$, or by assuming good situations everywhere
in every scale as in \cite{Gri:HKfractal,GriHu:Upper,GT} and
their descendants \cite{GriHu:HKGreen,GriHuLau:GenCap}.

The purpose of this paper is to provide a new probabilistic method of obtaining
\emph{localized} existence and sub-Gaussian upper bounds of the heat kernel
$p_{t}(x,y)$ of $X$ from \emph{localized} assumptions on $X$. Now we briefly
outline the statements of our main theorems.

The main localized existence theorem for the heat kernel (Theorem \ref{thm:HK-existence})
is proved for a Radon measure $\mu$ on $M$ with full support and a $\mu$-symmetric
Hunt process
$X=\bigl(\{X_{t}\}_{t\in[0,\infty]},\{\mathbb{P}_{x}\}_{x\in M_{\cemetery}}\bigr)$
on $M$ (\emph{not} necessarily with continuous sample paths) whose Dirichlet form
$(\functionspace{E},\functionspace{F})$ is regular on $L^{2}(M,\mu)$.
Let $U$ be a non-empty open subset of $M$, set
$\tau_{U}:=\inf\{t\in[0,\infty)\mid X_{t}\in M_{\cemetery}\setminus U\}$
($\inf\emptyset:=\infty$) and let
$\{T^{U}_{t}\}_{t\in(0,\infty)}$ denote the Dirichlet heat semigroup on $U$.
Then Theorem \ref{thm:HK-existence} states that for an interval
$I\subset(0,\infty)$ and open subsets $V,W$ of $M$, a
``\emph{$\mu$-almost everywhere} upper bound for $\{T^{U}_{t}\}_{t\in(0,\infty)}$ on
$I\times V\times W$ by a locally bounded upper semi-continuous kernel $H=H_{t}(x,y)$"
yields a Borel measurable function
$p^{U}=p^{U}_{t}(x,y)$ with $0\leq p^{U}_{t}(x,y)\leq H_{t}(x,y)$ such that
for \emph{$\functionspace{E}$-quasi-every} $x\in V$, for any $t\in I$,
\begin{equation}\label{eq:HK-existence-intro}
\mathbb{P}_{x}[X_{t}\in dy,\,t<\tau_{U}]=p^{U}_{t}(x,y)\,d\mu(y)\qquad\textrm{on }W.
\end{equation}
In fact, the same sort of results along with some additional regularity
properties of $p_{t}(x,y)$ have been obtained for $I=(0,\infty)$ and $U=V=W=M$
in \cite[Sections 7 and 8]{Gri:HKfractal} and \cite[Theorem 3.1]{BBCK}, but
our Theorem \ref{thm:HK-existence} should suffice for most applications
since it already guarantees the expected bound $p^{U}_{t}(x,y)\leq H_{t}(x,y)$
without requiring any regularity of the heat kernel $p^{U}_{t}(x,y)$.

The proof of Theorem \ref{thm:HK-existence} is mostly based on potential theory
for regular symmetric Dirichlet forms developed in \cite[Chapters 2 and 4]{FOT};
it should not be very difficult to generalize Theorem \ref{thm:HK-existence} to
a wider framework where the same kind of potential theory is still available.
As an intermediate step for the proof of Theorem \ref{thm:HK-existence},
we also prove in Proposition \ref{prop:HK-existence} that
``for \emph{$\functionspace{E}$-quasi-every} $x\in V$" in the above
statement can be improved to ``for \emph{any} $x\in V$" if the inequality
$\mathbb{P}_{x}[X_{t}\in dy,\,t<\tau_{U}]\leq H_{t}(x,y)\,d\mu(y)$
holds on $W$ for \emph{any} $(t,x)\in I\times V$.

Next we turn to our second main theorem on localized sub-Gaussian upper bounds
of heat kernels (Theorem \ref{thm:HKUB-localized}). For the reader's convenience,
we give here the precise statement of a simplified version of it. For $B\subset M$,
set $\tau_{B}:=\inf\{t\in[0,\infty)\mid X_{t}\in M_{\cemetery}\setminus B\}$
($\inf\emptyset:=\infty$) and let $\sigmafield{B}(B)$ denote its Borel
$\sigma$-field under the relative topology inherited from $M$.

\begin{theorem}\label{thm:HKUB-localized-intro}
Let $(M,d)$ be a locally compact separable metric space, let $\mu$ be a
$\sigma$-finite Borel measure $\mu$ on $M$ and let
$X=\bigl(\Omega,\sigmafield{M},\{X_{t}\}_{t\in[0,\infty]},
	\{\mathbb{P}_{x}\}_{x\in M_{\cemetery}}\bigr)$
be a Hunt process on $(M,\sigmafield{B}(M))$ with life time $\zeta$.
Let $N\in\sigmafield{B}(M)$ and assume that for any $x\in M\setminus N$,
\begin{equation}\label{eq:properly-exceptional-X-continuous-intro}
\mathbb{P}_{x}\bigl[X_{t}\in M_{\cemetery}\setminus N\textrm{ for any }t\in[0,\infty),\,
	[0,\zeta)\ni t\mapsto X_{t}\in M\textrm{ is continuous}\bigr]=1
\end{equation}
\textup{(namely, $M\setminus N$ is $X$-invariant and the restriction
$X|_{M\setminus N}$ of $X$ to $M\setminus N$ is a diffusion)}.

Let $\beta\in(1,\infty)$, let $R\in(0,\infty)$, let $U$ be a non-empty open subset of $M$
with $\diam U\leq R$ and let $F=F_{t}(x,y):(0,R^{\beta}]\times U\times U\to(0,\infty)$
be Borel measurable. Let $c_{F},\alpha_{F},c,\gamma\in(0,\infty)$
and assume that the following three conditions \textup{(DB)$_{\beta}$},
\textup{(DU)$_{F}^{U,R}$} and \textup{(P)$_{\beta}^{U,R}$} hold:
\begin{itemize}[label=\textup{(DU)$_{F}^{U,R}$},align=left,leftmargin=*]
\item[\textup{(DB)$_{\beta}$}]
	For any $(t,x,y),(s,z,w)\in(0,R^{\beta}]\times U\times U$ with $s\leq t$,
	\begin{equation}\label{eq:upper-bound-function-F-DBbeta}
	\frac{F_{s}(z,w)}{F_{t}(x,y)}
		\leq c_{F}\biggl(\frac{t\vee d(x,z)^{\beta}\vee d(y,w)^{\beta}}{s}\biggr)^{\alpha_{F}}.
	\end{equation}
\item[\textup{(DU)$_{F}^{U,R}$}]
	For any $(t,x)\in(0,R^{\beta})\times(U\setminus N)$ and any $A\in\sigmafield{B}(U)$,
	\begin{equation}\label{eq:HKUB-localized-on-diag-intro}
	\mathbb{P}_{x}[X_{t}\in A,\,t<\tau_{U}]\leq\int_{A}F_{t}(x,y)\,d\mu(y).
	\end{equation}
\item[\textup{(P)$_{\beta}^{U,R}$}]
	For any $(x,r)\in(U\setminus N)\times(0,R)$ with $B(x,r)\subset U$
	and any $t\in(0,\infty)$,
	\begin{equation}\label{eq:exit-probability-intro}
	\mathbb{P}_{x}[\tau_{B(x,r)}\leq t]
		\leq c\exp\bigl(-\gamma(r^{\beta}/t)^{\frac{1}{\beta-1}}\bigr).
	\end{equation}
\end{itemize}
Let $\varepsilon\in(0,1)$ and set
$U^{\circ}_{\varepsilon R}:=\{x\in M\mid\inf_{y\in M\setminus U}d(x,y)>\varepsilon R\}$
\textup{(note that $U^{\circ}_{\varepsilon R}$ is an open subset of $U$)}.
Then there exists a Borel measurable function
$p=p_{t}(x,y):(0,\infty)\times(M\setminus N)\times U^{\circ}_{\varepsilon R}\to[0,\infty)$
such that for any $(t,x)\in(0,\infty)\times(M\setminus N)$ the following hold:
\begin{equation}\label{eq:HKUB-localized-existence-intro}
\mathbb{P}_{x}[X_{t}\in A]=\int_{A}p_{t}(x,y)\,d\mu(y)
	\qquad\textrm{for any }A\in\sigmafield{B}(U^{\circ}_{\varepsilon R}),
\end{equation}
and furthermore for any $y\in U^{\circ}_{\varepsilon R}$,
\begin{equation}\label{eq:HKUB-localized-off-diag-U-intro}
p_{t}(x,y)\leq
	\begin{cases}
	c_{\varepsilon}F_{t}(x,y)\exp\bigl(-\gamma_{\varepsilon}(d(x,y)^{\beta}/t)^{\frac{1}{\beta-1}}\bigr) &\textrm{if }t<R^{\beta}\textrm{ and }x\in U,\\
	c_{\varepsilon}(\inf_{U\times U}F_{(2t)\wedge R^{\beta}})\exp\bigl(-\gamma_{\varepsilon}(R^{\beta}/t)^{\frac{1}{\beta-1}}\bigr) &\textrm{if }t<R^{\beta}\textrm{ and }x\not\in U,\\
	c_{\varepsilon}(\inf_{U\times U}F_{R^{\beta}}) &\textrm{if }t\geq R^{\beta}
	\end{cases}
\end{equation}
for some $c_{\varepsilon}\in(0,\infty)$ explicit in
$\beta,c_{F},\alpha_{F},c,\gamma,\varepsilon$ and
$\gamma_{\varepsilon}:=(\frac{1}{5}\varepsilon)^{\frac{\beta}{\beta-1}}\gamma$.
\end{theorem}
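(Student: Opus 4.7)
The plan is to build the heat kernel on $U^\circ_{\varepsilon R}$ in three stages: (i) extract a Dirichlet (part-process) heat kernel $p^U_t(x,y)$ on $U$ from \textup{(DU)}$_F^{U,R}$; (ii) upgrade this on-diagonal bound to a sub-Gaussian off-diagonal bound for $p^U$ by chaining with \textup{(P)}$_\beta^{U,R}$; (iii) extend to the full transition function of $X$ on $U^\circ_{\varepsilon R}$ by iterating the classical Dynkin-Hunt formula---this is the paper's \emph{multiple Dynkin-Hunt formula}---and bounding each excursion contribution using \textup{(P)} together with the continuity of $X|_{M\setminus N}$. Step (i) is immediate: since \textup{(DU)}$_F^{U,R}$ holds pointwise on $U\setminus N$ and $F$ is Borel, an application of Proposition~\ref{prop:HK-existence} (the pointwise version of Theorem~\ref{thm:HK-existence}) produces a Borel measurable $p^U_t(x,y)$ on $(0,R^\beta)\times(U\setminus N)\times U$ with $0\leq p^U_t(x,y)\leq F_t(x,y)$ and $\mathbb{P}_x[X_t\in A,\,t<\tau_U]=\int_A p^U_t(x,y)\,d\mu(y)$ for $A\in\sigmafield{B}(U)$.

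For step (ii), given $x,y\in U$ with $r:=d(x,y)>0$ and $B(x,r/2),B(y,r/2)\subset U$, I would write $p^U_t(x,y)=\int_U p^U_{t/2}(x,z)p^U_{t/2}(z,y)\,d\mu(z)$ and split the integration domain into $B(y,r/2)$ and its complement in $U$. On $B(y,r/2)$ the essential disjointness of $B(x,r/2)$ and $B(y,r/2)$ gives $\int_{B(y,r/2)}p^U_{t/2}(x,z)\,d\mu(z)\leq\mathbb{P}_x[\tau_{B(x,r/2)}\leq t/2]$, bounded by \textup{(P)}$_\beta^{U,R}$, which can be paired with the on-diagonal bound $p^U_{t/2}(z,y)\leq F_{t/2}(z,y)$; the complement is handled symmetrically. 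The doubling condition \textup{(DB)}$_\beta$ absorbs $F_{t/2}$ into $F_t$ up to constants. Chaining the argument across $k$ equal segments of time $t/k$ and distance $r/k$ preserves the sub-Gaussian exponent $\gamma$ per step, and optimizing in $k$ yields
\[
p^U_t(x,y)\leq C_\varepsilon F_t(x,y)\exp\bigl(-\gamma_\varepsilon(d(x,y)^\beta/t)^{1/(\beta-1)}\bigr),
\]
valid for $t\in(0,R^\beta)$ and $x,y\in U$ with sufficient distance from $\partial U$; the factor $(\varepsilon/5)^{\beta/(\beta-1)}$ in $\gamma_\varepsilon$ accounts for the geometric room required along the chain to invoke \textup{(P)}.

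For step (iii), I would iterate the one-step Dynkin-Hunt decomposition
\[
\mathbb{P}_x[X_t\in A]=\mathbb{P}_x[X_t\in A,\,t<\tau_U]+\mathbb{E}_x\bigl[\mathbb{P}_{X_{\tau_U}}[X_{t-\tau_U}\in A];\,\tau_U\leq t\bigr]
\]
through successive exit-entry cycles of $X$ with respect to $U$, producing an infinite sum whose $k$-th term integrates the part kernel $p^U_{t-\sigma_k}(X_{\sigma_k},\cdot)$ against the joint law of $k$ such cycles. For $A\subset U^\circ_{\varepsilon R}$, the continuity of $X|_{M\setminus N}$ guaranteed by \eqref{eq:properly-exceptional-X-continuous-intro} forces every re-entry from $M\setminus U$ to traverse distance at least $\varepsilon R$, so \textup{(P)}$_\beta^{U,R}$ supplies a factor $\exp\bigl(-\gamma((\varepsilon R)^\beta/t)^{1/(\beta-1)}\bigr)$ per cycle, ensuring absolute convergence and defining the desired density $p_t(x,y)$. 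The three cases in \eqref{eq:HKUB-localized-off-diag-U-intro} then emerge: (a) $x\in U$ and $t<R^\beta$: the leading part-kernel term dominates and yields the $d(x,y)$-sub-Gaussian bound from step (ii); (b) $x\notin U$ and $t<R^\beta$: even the first excursion pays an $\exp\bigl(-\gamma_\varepsilon(R^\beta/t)^{1/(\beta-1)}\bigr)$ factor before any part kernel is invoked; (c) $t\geq R^\beta$: the sub-Gaussian factor saturates to a constant.

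The principal obstacle is the rigorous formulation and bookkeeping of the multiple Dynkin-Hunt iteration: identifying the correct stopping-time $\sigma$-algebras for successive applications of the strong Markov property, verifying Borel measurability of each iterate, controlling the combinatorial multiplicity of excursion sequences by the per-cycle geometric smallness, and ensuring via the invariance in \eqref{eq:properly-exceptional-X-continuous-intro} that $X$ remains in $M\setminus N$ so that \textup{(DU)} and \textup{(P)} stay applicable at every re-entry. Propagating constants through both the chaining in step (ii) and the infinite sum in step (iii) to recover the explicit dependence of $c_\varepsilon$ on $\beta,c_F,\alpha_F,c,\gamma,\varepsilon$ is a further bookkeeping challenge.
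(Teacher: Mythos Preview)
Your three-stage architecture matches the paper's exactly (Proposition~\ref{prop:HK-existence} for step~(i), Proposition~\ref{prop:HKUB-localized} for step~(ii), Theorem~\ref{thm:multiple-DH} and Proposition~\ref{prop:HKUB-localized-difference} for step~(iii)), but your step~(ii) has a genuine gap. When you split the Chapman--Kolmogorov integral at $B(y,r/2)$ and say ``the complement is handled symmetrically'', the symmetric handling would need a bound on $\int_{U\setminus B(y,r/2)} p^{U}_{t/2}(z,y)\,d\mu(z)$ in terms of $\mathbb{P}_{y}[\tau_{B(y,r/2)}\leq t/2]$, i.e.\ control of the heat kernel integrated in its \emph{first} variable. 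That is available only if $p^{U}_{t/2}(z,y)=p^{U}_{t/2}(y,z)$, which requires $\mu$-symmetry of $X$---and Theorem~\ref{thm:HKUB-localized-intro} does not assume it. The paper stresses this point: its proof of Proposition~\ref{prop:HKUB-localized} is designed precisely to avoid the symmetric comparison inequalities of \cite{GriHuLau:comp}. Instead of Chapman--Kolmogorov, it introduces a decreasing family of entrance times $\sigma_{n}=\dot{\sigma}_{B(y_{0},r_{n})}$ with $r_{n}=r+2^{-n/(2\beta)}r$ and decomposes $\{X_{t}\in B(y_{0},r)\}$ according to the first index $n$ for which $\sigma_{n+1}\leq\tfrac{1}{2}(\sigma_{n}+t)$; every estimate then runs forward in time from a known starting point, so only \textup{(DU)} and \textup{(P)} in their stated (non-symmetric) form are used.

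Your step~(iii) is closer to the paper, but the phrase ``\textup{(P)} supplies a factor $\exp(-\gamma((\varepsilon R)^{\beta}/t)^{1/(\beta-1)})$ per cycle, ensuring absolute convergence'' is not quite right: the constant $c$ in \textup{(P)} may exceed $1$, so a naive product $c^{n}\exp(-n\gamma(\cdots))$ need not be summable when $t$ is close to $R^{\beta}$. The paper instead uses a pigeonhole: if $\sigma_{n}\leq t$ then one of the $n-1$ inter-excursion gaps is at most $t/(n-1)$, and applying \textup{(P)} to that single gap with time budget $t/(n-1)$ yields a factor $c\exp\bigl(-\gamma'((n-1)R^{\beta}/t)^{1/(\beta-1)}\bigr)$, whose super-exponential decay in $n$ gives convergence regardless of $c$.
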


The strength of Theorem \ref{thm:HKUB-localized-intro} is that 
\emph{the conditions \textup{(DU)$_{F}^{U,R}$} and \textup{(P)$_{\beta}^{U,R}$}
are independent of the behavior of $X$ after exiting $U$ and thereby completely
localized within $U$} but assure nevertheless the existence and an upper bound
of the heat kernel $p=p_{t}(x,y)$ for the \emph{global} transition function
$\mathbb{P}_{x}[X_{t}\in dy]$.

The power function $F_{t}(x,y)=c_{0}t^{-\alpha}$ clearly satisfies \textup{(DB)$_{\beta}$},
and it is easy to see that \textup{(DB)$_{\beta}$} holds also for the volume function
\eqref{eq:F-vol-intro} provided \eqref{eq:VD-intro} is satisfied for any
$(x,r)\in U\times(0,R)$; see Example \ref{exmp:upper-bound-function} for some more details.
In view of these examples of $F=F_{t}(x,y)$, \textup{(DU)$_{F}^{U,R}$} amounts
to an \emph{on-diagonal} upper bound of the heat kernel $p^{U}=p^{U}_{t}(x,y)$ for
$\{T^{U}_{t}\}_{t\in(0,\infty)}$, which is known to be implied in the setting of
a regular symmetric Dirichlet form by the \emph{local Nash inequality} as shown
in \cite[Lemma 4.3]{Kig:localNash} and by the \emph{Faber-Krahn inequality}
as treated in \cite[Subsection 5.2 and (5.48)]{GriHu:Upper}.

The proof of Theorem \ref{thm:HKUB-localized-intro} relies essentially only on
two probabilistic iteration arguments based on the strong Markov property of $X$,
where the series in the resulting upper estimates are shown to converge to the
desired bounds by making heavy use of the condition \textup{(P)$_{\beta}^{U,R}$}.
In this sense, \emph{\textup{(P)$_{\beta}^{U,R}$} could be considered as the
probabilistic replacement for cutoff functions with well-controlled energy}.
One iteration argument involves the behavior of $X$ within $U$ alone and is
used in the first step of the proof of Theorem \ref{thm:HKUB-localized-intro} to
obtain an off-diagonal sub-Gaussian type upper bound of the Dirichlet heat kernel
$p^{U}=p^{U}_{t}(x,y)$ on $U$ \emph{without assuming the symmetry of $X$}
(Proposition \ref{prop:HKUB-localized}). The other iteration is
formulated as an equality, which we call a \emph{multiple Dynkin-Hunt formula},
expressing the global transition function $\mathbb{P}_{x}[X_{t}\in A]$
in terms of $\mathbb{P}_{y}[X_{s}\in A,\,s<\tau_{U}]$, $(s,y)\in[0,t]\times U$,
for each Borel subset $A$ of $M$ with $\overline{A}\subset U$
(Theorem \ref{thm:multiple-DH}) and thus enabling us to deduce upper bounds for
the former from those for the latter together with \textup{(P)$_{\beta}^{U,R}$}
(Proposition \ref{prop:HKUB-localized-difference}).

Note that the case of bounded $(M,d)$ has been excluded from the main results
of \cite{AB,Gri:HKfractal,GriHu:HKGreen,GriHu:Upper,GriHuLau:GenCap,GT},
mainly due to their construction of the global heat kernel $p_{t}(x,y)$ as
the limit as $U\uparrow M$ of the Dirichlet heat kernel $p^{U}_{t}(x,y)$ on $U$;
indeed, taking the limit as $U\uparrow M$ is not allowed for bounded $(M,d)$
since part of their conditions \textup{(FK)$_{\Psi}$} (Faber-Krahn inequality)
and \textup{(E)$_{\Psi}$} (mean exit time estimate, see
\eqref{eq:exit-time-upper} and \eqref{eq:exit-time-lower}
in Theorem \ref{thm:exit-time-probability} below)
must fail when the ball $B(x,r)$ coincides with $M$.
We expect that this difficulty can be overcome by applying the main results of this
paper, so that their results should be easily extended to the case of bounded $(M,d)$.
In fact, Barlow, Bass, Kumagai and Teplyaev \cite{BBKT:supplement} have used
an argument very similar to our proof of Theorem \ref{thm:multiple-DH}
and Proposition \ref{prop:HKUB-localized-difference}
in \cite[Proof of Proposition 2.12]{BBKT:supplement}
for the resolvent of the diffusion to extend part of
the main results of \cite{GriHu:HKGreen,GT} to the case of bounded $(M,d)$.
Our proof of Theorem \ref{thm:HKUB-localized-intro} has successfully localized
their idea by working directly with the transition function (semigroup)
rather than the resolvent.

Finally, we remark that Theorem \ref{thm:HKUB-localized-intro} has been recently
applied in \cite{AK} to prove the continuity and sub-Gaussian off-diagonal
upper bounds of the heat kernel of the \emph{Liouville Brownian motion}, the
canonical diffusion in a certain random geometry of $\mathbb{R}^{2}$ induced by
a (massive) Gaussian free field. These results in \cite{AK} have had to rely strongly
on Theorem \ref{thm:HKUB-localized-intro} due to the fact that the unboundedness
of $\mathbb{R}^{2}$ precludes any uniform estimates of volumes and exit times
over the whole $\mathbb{R}^{2}$ valid for almost every environment, as opposed
to the case of the two-dimensional torus, where the same kind of results have
been obtained independently and simultaneously in \cite{MRVZ}.

The rest of this paper is organized as follows.
In Section \ref{sec:Hunt-processes}, we collect basic definitions and facts
concerning Hunt processes. Section \ref{sec:multiple-DH} formulates one of our
two iteration arguments as a multiple Dynkin-Hunt formula and proves it for an
\emph{arbitrary} Hunt process (Theorem \ref{thm:multiple-DH}).
In Section \ref{sec:Dirichlet-form}, we recall the notions of the symmetry of
a Hunt process, the associated symmetric Dirichlet form and its regularity,
together with some basic potential theory that is needed in
Section \ref{sec:HK-existence} to state and prove our main localized
existence theorem for the heat kernel (Theorem \ref{thm:HK-existence}).
In Section \ref{sec:HKUB} we state our main theorem on localized sub-Gaussian
upper bounds of heat kernels (Theorem \ref{thm:HKUB-localized}) and a global version
of it (Theorem \ref{thm:HKUB-localized-global}) and prove them on the basis of
our other probabilistic iteration (Proposition \ref{prop:HKUB-localized}) and
the multiple Dynkin-Hunt formula combined with the condition
\textup{(P)$_{\beta}^{U,R}$} (Proposition \ref{prop:HKUB-localized-difference}).
Lastly, Section \ref{sec:exit-probability} is devoted to providing sufficient
conditions for \textup{(P)$_{\beta}^{U,R}$}
(Theorems \ref{thm:exit-probability} and \ref{thm:exit-time-probability}) as a
localized version of the (well-)known results in \cite{Bar,Gri:HKfractal,GriHu:Upper}.

\begin{notation}
In this paper, we adopt the following notation and conventions.
\begin{itemize}[label=\textup{(0)},align=left,leftmargin=*]
\item[\textup{(0)}]The symbols $\subset$ and $\supset$ for set inclusion
\emph{allow} the case of the equality.
\item[\textup{(1)}]$\mathbb{N}=\{n\in\mathbb{Z}\mid n>0\}$, i.e., $0\not\in\mathbb{N}$.
\item[\textup{(2)}]We set $\sup\emptyset:=0$ and $\inf\emptyset:=\infty$. We write
$a\vee b:=\max\{a,b\}$, $a\wedge b:=\min\{a,b\}$, $a^{+}:=a\vee 0$ and
$a^{-}:=-(a\wedge 0)$ for $a,b\in[-\infty,\infty]$, and
we use the same notation also for $[-\infty,\infty]$-valued functions
and equivalence classes of them. All numerical functions treated in this paper
are assumed to be $[-\infty,\infty]$-valued.
\item[\textup{(3)}]Let $E$ be a topological space.
The Borel $\sigma$-field of $E$ is denoted by $\sigmafield{B}(E)$.
We set
\begin{align*}
C(E)&:=\{u\mid u:E\to\mathbb{R},\,u\textrm{ is continuous}\},\\
C_{\mathrm{c}}(E)&:=\{u\in C(E)
	\mid\textrm{the closure of }u^{-1}(\mathbb{R}\setminus\{0\})\textrm{ in }E\textrm{ is compact}\},\\
\functionspace{B}(E)&:=\{u\mid u:E\to[-\infty,\infty],\,u\textrm{ is Borel measurable (i.e.,\ }\sigmafield{B}(E)\textrm{-measurable)}\},\\
\functionspace{B}^{+}(E)&:=\{u\in\functionspace{B}(E)\mid u\textrm{ is }[0,\infty]\textrm{-valued}\},\\
\functionspace{B}_{\mathrm{b}}(E)&:=\{u\in\functionspace{B}(E)\mid\|u\|_{\mathrm{sup}}<\infty\},
\end{align*}
where $\|u\|_{\mathrm{sup}}:=\|u\|_{\mathrm{sup},E}:=\sup_{x\in E}|u(x)|$ for $u:E\to[-\infty,\infty]$.
\end{itemize}
\end{notation}

\section{Basics on Hunt processes}\label{sec:Hunt-processes}

In this section, we introduce our framework of a Hunt process. To keep the main
results of this paper accessible to those who are not familiar with the theory
of Markov processes, we explain basic definitions and facts in some detail. See
\cite[Section A.2]{FOT} and \cite[Section A.1]{CF} for further details on Hunt processes.

Let $M$ be a locally compact separable metrizable topological space.
The interior, closure and boundary of $A\subset M$ in $M$ are denoted by
$\interior A$, $\overline{A}$ and $\partial A$, respectively.
Each $A\subset M$ is equipped with the relative topology inherited from $M$,
so that its Borel $\sigma$-field $\sigmafield{B}(A)$ can be expressed as
$\sigmafield{B}(A)=\{B\cap A\mid B\in\sigmafield{B}(M)\}$. Let
$M_{\cemetery}:=M\cup\{\cemetery\}$ denote the one-point compactification of $M$,
which satisfies 
$\sigmafield{B}(M_{\cemetery})=\sigmafield{B}(M)\cup\{A\cup\{\cemetery\}\mid A\in\sigmafield{B}(M)\}$.
In what follows, $[-\infty,\infty]$-valued functions on $M$ are always set
to be $0$ at $\cemetery$ unless their values at $\Delta$ are already defined:
$u(\cemetery):=0$ for $u:M\to[-\infty,\infty]$.

Let
$X=\bigl(\Omega,\sigmafield{M},\{X_{t}\}_{t\in[0,\infty]},
	\{\mathbb{P}_{x}\}_{x\in M_{\cemetery}}\bigr)$
be a Hunt process on $(M,\sigmafield{B}(M))$ with life time $\zeta$ and
shift operators $\{\theta_{t}\}_{t\in[0,\infty]}$.
By definition, $(\Omega,\sigmafield{M})$ is a measurable space,
$\{X_{t}\}_{t\in[0,\infty]}$ is a family of
$\sigmafield{M}/\sigmafield{B}(M_{\cemetery})$-measurable maps
$X_{t}:\Omega\to M_{\cemetery}$ such that
$X_{t}(\omega)=\cemetery$ for any $t\in[\zeta(\omega),\infty]$
for each $\omega\in\Omega$,
where $\zeta(\omega):=\inf\{t\in[0,\infty)\mid X_{t}(\omega)=\cemetery\}$,
and $\{\theta_{t}\}_{t\in[0,\infty]}$ is a family of maps
$\theta_{t}:\Omega\to\Omega$ satisfying
$X_{s}\circ\theta_{t}=X_{s+t}$ for any $s,t\in[0,\infty]$.
It is further assumed that for each $\omega\in\Omega$,
$[0,\infty)\ni t\mapsto X_{t}(\omega)\in M_{\cemetery}$ is right-continuous and
the limit $X_{t-}(\omega):=\lim_{s\to t,\,s<t}X_{s}(\omega)$ exists in $M_{\cemetery}$
for any $t\in(0,\infty)$; see \cite[Section A.2, (M.6)]{FOT}. The pair $X$
of such a stochastic process $\bigl(\Omega,\sigmafield{M},\{X_{t}\}_{t\in[0,\infty]}\bigr)$
and a family $\{\mathbb{P}_{x}\}_{x\in M_{\cemetery}}$ of probability measures
on $(\Omega,\sigmafield{M})$ is then called a
\emph{Hunt process on $(M,\sigmafield{B}(M))$}
if and only if it is a normal Markov process on
$(M,\sigmafield{B}(M))$ whose minimum completed admissible filtration
$\sigmafield{F}_{*}=\{\sigmafield{F}_{t}\}_{t\in[0,\infty]}$ is \emph{right-continuous}
and it is \emph{strong Markov} and \emph{quasi-left-continuous} with respect to
$\sigmafield{F}_{*}$; see
\cite[Section A.2, (M.2)--(M.5), the paragraph before Lemma A.2.2, (A.2.3) and (A.2.4)]{FOT}
for the precise definitions of these notions.

For $x\in M_{\cemetery}$, the expectation (that is, the integration on $\Omega$)
under the measure $\mathbb{P}_{x}$ is denoted by $\mathbb{E}_{x}[(\cdot)]$.
We remark that by \cite[Exercise A.1.20-(i)]{CF}, for each
$\sigmafield{F}_{\infty}$-measurable random variable $Y:\Omega\to[0,\infty]$
the function $M_{\cemetery}\ni x\mapsto\mathbb{E}_{x}[Y]\in[0,\infty]$ is
\emph{universally measurable}, i.e., measurable with respect to the
\emph{universal $\sigma$-field $\sigmafield{B}^{*}(M_{\cemetery})$ of $M_{\cemetery}$} defined as
$\sigmafield{B}^{*}(M_{\cemetery}):=\bigcap_{\nu}\sigmafield{B}^{\nu}(M_{\cemetery})$;
here $\nu$ runs through the set of probability (or equivalently, $\sigma$-finite)
measures on $(M_{\cemetery},\sigmafield{B}(M_{\cemetery}))$ and
$\sigmafield{B}^{\nu}(M_{\cemetery})$ denotes the $\nu$-completion
of $\sigmafield{B}(M_{\cemetery})$.

The Hunt process $X$ gives rise to a family $\{\functionspace{P}_{t}\}_{t\in[0,\infty)}$
of Markovian kernels on $(M,\sigmafield{B}(M))$ called the
\emph{transition function of $X$}, which is defined by
\begin{equation}\label{eq:transition-function}
\functionspace{P}_{t}(x,A):=\mathbb{P}_{x}[X_{t}\in A],
	\qquad t\in[0,\infty),\ x\in M,\ A\in\sigmafield{B}(M).
\end{equation}
Then for $t\in[0,\infty)$ and $u\in\functionspace{B}(M)$, we define
\begin{equation}\label{eq:PtEx}
\functionspace{P}_{t}u(x):=\int_{M}u(y)\functionspace{P}_{t}(x,dy)=\mathbb{E}_{x}[u(X_{t})]
\end{equation}%
for $x\in M$ satisfying $\mathbb{E}_{x}[u^{+}(X_{t})]\wedge\mathbb{E}_{x}[u^{-}(X_{t})]<\infty$,
so that $\functionspace{P}_{t}(\functionspace{B}^{+}(M))\subset\functionspace{B}^{+}(M)$ and
$\functionspace{P}_{t}(\functionspace{B}_{\mathrm{b}}(M))\subset\functionspace{B}_{\mathrm{b}}(M)$.
Note that our convention of setting $\functionspace{P}_{t}u(\cemetery):=0$ is
consistent with \eqref{eq:PtEx} for $x=\cemetery$ since
$\mathbb{E}_{\cemetery}[u(X_{t})]=\mathbb{E}_{\cemetery}[u(\cemetery)]=0$
by $\mathbb{P}_{\cemetery}[X_{t}=\cemetery]=1$.
Obviously, if $u\in\functionspace{B}(M)$ is $[0,1]$-valued then so is
$\functionspace{P}_{t}u$, and the Markov property of $X$
(see \cite[(A.2.2)]{FOT} or \cite[(A.1.3)]{CF}) easily implies the
semigroup property
\begin{equation}\label{eq:Pt-semigroup}
\functionspace{P}_{t}\functionspace{P}_{s}u=\functionspace{P}_{t+s}u,
	\qquad t,s\in[0,\infty),\ u\in\functionspace{B}^{+}(M)\cup\functionspace{B}_{\mathrm{b}}(M).
\end{equation}
Moreover,
it easily follows from the sample path right-continuity of $X$ and
the Dynkin class theorem \cite[Chapter 0, Theorem 2.2]{BG} that
\begin{equation}\label{eq:Pt-measurable}
[0,\infty)\times M\ni(t,x)\mapsto\functionspace{P}_{t}u(x)
\textrm{ is Borel measurable for any }u\in\functionspace{B}^{+}(M)\cup\functionspace{B}_{\mathrm{b}}(M).
\end{equation}

Recall that $\sigma:\Omega\to[0,\infty]$ is called an
\emph{$\sigmafield{F}_{*}$-stopping time} if and only if
$\{\sigma\leq t\}\in\sigmafield{F}_{t}$ for any $t\in[0,\infty)$.
For $B\subset M_{\cemetery}$, we define its \emph{entrance time $\dot{\sigma}_{B}$}
and \emph{exit time $\tau_{B}$ for $X$} by
\begin{equation}\label{eq:entrance-exit}
\dot{\sigma}_{B}(\omega):=\inf\{t\in[0,\infty)\mid X_{t}(\omega)\in B\},
	\quad\omega\in\Omega\qquad\textrm{and}\qquad
	\tau_{B}:=\dot{\sigma}_{M_{\cemetery}\setminus B},
\end{equation}
and we also set $\hat{\sigma}_{B}(\omega):=\inf\{t\in(0,\infty)\mid X_{t-}(\omega)\in B\}$
for $\omega\in\Omega$. If $B\in\sigmafield{B}(M_{\cemetery})$, then
$\dot{\sigma}_{B},\tau_{B},\hat{\sigma}_{B}$ are
$\sigmafield{F}_{*}$-stopping times and
\begin{equation}\label{eq:entrance-leq-entrance-left-limit}
\mathbb{P}_{x}[\dot{\sigma}_{B}\leq\hat{\sigma}_{B}]=1
	\qquad\textrm{for any }x\in M_{\cemetery}
\end{equation}
by \cite[Theorem A.2.3]{FOT}, where the case of $\cemetery\in B$ is easily
deduced from that of $B\in\sigmafield{B}(M)$ by using the equalities
$\dot{\sigma}_{B\cup\{\cemetery\}}=\dot{\sigma}_{B}\wedge\zeta$ and
$\hat{\sigma}_{B\cup\{\cemetery\}}=\hat{\sigma}_{B}\wedge\hat{\sigma}_{\{\cemetery\}}$
for $B\subset M$ and the quasi-left-continuity \cite[(A.2.4)]{FOT} of $X$
(see also \cite[Theorem A.1.19 and Exercise A.1.26-(ii)]{CF}).
Note that if $B\subset M_{\cemetery}$, $t\in[0,\infty]$ and
$\omega\in\{\dot{\sigma}_{B}\geq t\}$ then
$\dot{\sigma}_{B}(\omega)=t+\dot{\sigma}_{B}(\theta_{t}(\omega))$.

Next we introduce the part of $X$ on open sets. Let $U$ be a non-empty open
subset of $M$, let $U_{\cemetery}:=U\cup\{\cemetery_{U}\}$ denote its one-point
compactification and define
\begin{equation}\label{eq:part-process}
X^{U}_{t}(\omega):=
	\begin{cases}
	X_{t}(\omega)&\textrm{if }t<\tau_{U}(\omega),\\
	\cemetery_{U}&\textrm{if }t\geq\tau_{U}(\omega),\\
	\end{cases}
\qquad(t,\omega)\in[0,\infty]\times\Omega
\end{equation}
and $\mathbb{P}_{\cemetery_{U}}:=\mathbb{P}_{\cemetery}$. Then
$X^{U}:=\bigl(\Omega,\sigmafield{M},\{X^{U}_{t}\}_{t\in[0,\infty]},\{\mathbb{P}_{x}\}_{x\in U_{\cemetery}}\bigr)$,
called the \emph{part of $X$ on $U$}, is a Hunt process on $(U,\sigmafield{B}(U))$
by \cite[Theorem A.2.10]{FOT}. Its transition function is naturally extended to
$(M,\sigmafield{B}(M))$ as a family $\{\functionspace{P}^{U}_{t}\}_{t\in[0,\infty)}$
of Markovian kernels on $(M,\sigmafield{B}(M))$ given by
(with the obvious convention that $\cemetery_{U}\not\in M$)
\begin{equation}\label{eq:transition-function-part}
\functionspace{P}^{U}_{t}(x,A):=\mathbb{P}_{x}[X^{U}_{t}\in A]
	=\mathbb{P}_{x}[X_{t}\in A,\,t<\tau_{U}],
	\quad t\in[0,\infty),\ x\in M,\ A\in\sigmafield{B}(M).
\end{equation}
Also for $t\in[0,\infty)$ and $u\in\functionspace{B}(M)$,
similarly to \eqref{eq:PtEx} we further define
\begin{equation}\label{eq:PtEx-part}
\functionspace{P}^{U}_{t}u(x):=\int_{M}u(y)\functionspace{P}^{U}_{t}(x,dy)
	=\int_{U}u(y)\functionspace{P}^{U}_{t}(x,dy)
	=\mathbb{E}_{x}[u(X_{t})\ind{\{t<\tau_{U}\}}]
\end{equation}
for $x\in M$ satisfying
$\mathbb{E}_{x}[u^{+}(X_{t})\ind{\{t<\tau_{U}\}}]\wedge\mathbb{E}_{x}[u^{-}(X_{t})\ind{\{t<\tau_{U}\}}]<\infty$,
where $\ind{A}:\Omega\to\{0,1\}$ denotes the indicator function of $A\subset\Omega$
given by $\ind{A}|_{A}:=1$ and $\ind{A}|_{\Omega\setminus A}:=0$. Then
$\functionspace{P}^{U}_{t}u(x)=0$ for $x\in M\setminus U$,
$\functionspace{P}^{U}_{t}(\functionspace{B}^{+}(M))\subset\functionspace{B}^{+}(M)$,
$\functionspace{P}^{U}_{t}(\functionspace{B}_{\mathrm{b}}(M))\subset\functionspace{B}_{\mathrm{b}}(M)$,
and \eqref{eq:Pt-semigroup} and \eqref{eq:Pt-measurable} hold
with $\{\functionspace{P}^{U}_{t}\}_{t\in[0,\infty)}$
in place of $\{\functionspace{P}_{t}\}_{t\in[0,\infty)}$.

\section{A multiple Dynkin-Hunt formula for Hunt processes}\label{sec:multiple-DH}

As in Section \ref{sec:Hunt-processes}, let $M$ be a locally compact separable
metrizable topological space and let $X$ be a Hunt process on $(M,\sigmafield{B}(M))$
with life time $\zeta$ and shift operators $\{\theta_{t}\}_{t\in[0,\infty]}$.
\emph{Throughout the rest of this paper, we fix this setting and follow
the notation introduced in Section \textup{\ref{sec:Hunt-processes}}}.

In this section, we state and prove a \emph{multiple Dynkin-Hunt formula}
(Theorem \ref{thm:multiple-DH} below) which gives an expression of
$\functionspace{P}_{t}u$ in terms of $\functionspace{P}^{U}_{s}u$, $s\in[0,t]$,
for a non-empty open subset $U$ of $M$ and functions
$u\in\functionspace{B}^{+}(M)\cup\functionspace{B}_{\mathrm{b}}(M)$
supported in $U$. It will be used later in Section \ref{sec:HKUB}
to deduce upper bounds for $\{\functionspace{P}_{t}\}_{t\in(0,\infty)}$
from those for $\{\functionspace{P}^{U}_{t}\}_{t\in(0,\infty)}$.

The statement of Theorem \ref{thm:multiple-DH} requires the following definition and proposition.

\begin{definition}\label{dfn:entrance-exit-after-sigma}
For $\sigma:\Omega\to[0,\infty]$ and $B\subset M_{\cemetery}$,
the \emph{entrance time $\dot{\sigma}_{B,\sigma}$} and
\emph{exit time $\tau_{B,\sigma}$ of $B$ after $\sigma$ for $X$}
are defined by (with the convention that $[\infty,\infty):=\emptyset$)
\begin{equation}\label{eq:entrance-exit-after-sigma}
\dot{\sigma}_{B,\sigma}(\omega):=\inf\{t\in[\sigma(\omega),\infty)\mid X_{t}(\omega)\in B\},
	\quad\omega\in\Omega\quad\textrm{and}\quad
	\tau_{B,\sigma}:=\dot{\sigma}_{M_{\cemetery}\setminus B,\sigma},
\end{equation}
so that
$\dot{\sigma}_{B,\sigma}(\omega)=\sigma(\omega)+\dot{\sigma}_{B}(\theta_{\sigma(\omega)}(\omega))$
and
$\tau_{B,\sigma}(\omega)=\sigma(\omega)+\tau_{B}(\theta_{\sigma(\omega)}(\omega))$
for any $\omega\in\Omega$.
\end{definition}

\begin{proposition}\label{prop:entrance-exit-after-sigma}
For any $\sigmafield{F}_{*}$-stopping time $\sigma$ and any
$B\in\sigmafield{B}(M_{\cemetery})$,
the entrance time $\dot{\sigma}_{B,\sigma}$ and exit time $\tau_{B,\sigma}$ of
$B$ after $\sigma$ for $X$ are $\sigmafield{F}_{*}$-stopping times.
\end{proposition}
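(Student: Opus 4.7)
Since $\tau_{B,\sigma} = \dot{\sigma}_{M_{\cemetery}\setminus B,\sigma}$ by definition and $M_{\cemetery}\setminus B \in \sigmafield{B}(M_{\cemetery})$, it suffices to prove the claim for the entrance time $\dot{\sigma}_{B,\sigma}$. The plan is to represent $\dot{\sigma}_{B,\sigma}$ as the debut of an explicit $\sigmafield{F}_{*}$-progressive subset of $[0,\infty)\times\Omega$ and then invoke the classical debut theorem.

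Concretely, I would set $\Lambda := \{(t,\omega) \in [0,\infty)\times\Omega \mid t \geq \sigma(\omega)\textrm{ and }X_{t}(\omega) \in B\}$, so that $\dot{\sigma}_{B,\sigma}(\omega) = \inf\{t \in [0,\infty) \mid (t,\omega) \in \Lambda\}$ for every $\omega\in\Omega$. The factor $\{(t,\omega)\mid t \geq \sigma(\omega)\}$ is $\sigmafield{F}_{*}$-progressive because $\sigma$ is an $\sigmafield{F}_{*}$-stopping time, while the factor $\{(t,\omega)\mid X_{t}(\omega)\in B\}$ is progressive because the sample paths of $X$ are right-continuous and $B$ is Borel. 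The intersection $\Lambda$ is therefore $\sigmafield{F}_{*}$-progressive, and since $\sigmafield{F}_{*}$ is right-continuous (and complete) as part of the definition of the Hunt process recalled in Section \ref{sec:Hunt-processes}, the debut theorem for progressive sets yields that $\dot{\sigma}_{B,\sigma}$ is an $\sigmafield{F}_{*}$-stopping time.

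As an alternative I would keep in mind the shift-composition formula stated in Definition \ref{dfn:entrance-exit-after-sigma}, namely $\dot{\sigma}_{B,\sigma} = \sigma + \dot{\sigma}_{B}\circ\theta_{\sigma}$, combined with the fact that $\dot{\sigma}_{B}$ is itself an $\sigmafield{F}_{*}$-stopping time for Borel $B$ by \cite[Theorem A.2.3]{FOT}. The required general fact that $\sigma + T\circ\theta_{\sigma}$ is a stopping time whenever $\sigma$ and $T$ are is proved by approximating $\sigma$ from above by the dyadic stopping times $\sigma_{n}:=2^{-n}\lceil 2^{n}\sigma\rceil$, decomposing $\{\sigma_{n}+T\circ\theta_{\sigma_{n}}<t\}$ according to the countably many values of $\sigma_{n}$, and using the shift compatibility $\theta_{r}^{-1}(\sigmafield{F}_{u})\subset\sigmafield{F}_{r+u}$ together with the right-continuity of $\sigmafield{F}_{*}$ to pass to the limit. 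The main technical obstacle in either route is merely to verify the applicability of these standard tools to the minimum completed admissible filtration; the Hunt-process hypotheses guarantee right-continuity, completeness, and shift admissibility, so no new idea beyond established potential-theoretic technology is needed.
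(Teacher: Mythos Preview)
Your proposal is correct and takes essentially the same approach as the paper: the paper likewise writes $\{\dot{\sigma}_{B,\sigma}<t\}$ as the projection onto $\Omega$ of the set $\{(s,\omega)\in[0,t)\times\Omega\mid \sigma(\omega)\leq s,\ X_{s}(\omega)\in B\}$, observes that this set lies in $\sigmafield{B}([0,t])\otimes\sigmafield{F}_{t}$ by right-continuity of the paths and the stopping-time property of $\sigma$, and then invokes the projection/debut theorem from \cite[Chapter III, 13 and 33]{DM} together with the right-continuity of $\sigmafield{F}_{*}$. Your phrasing via progressive measurability and the debut theorem is just the packaged version of exactly this argument.
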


\begin{proof}
This proposition should be well-known, but we give an explicit proof for completeness.
We follow \cite[Proof of Theorem A.1.19]{CF}. For each $t\in(0,\infty)$, the set
$\{\dot{\sigma}_{B,\sigma}<t\}=\{\omega\in\Omega\mid\dot{\sigma}_{B,\sigma}(\omega)<t\}$
is equal to the projection on $\Omega$ of
\begin{equation*}
\{(s,\omega)\in[0,t)\times\Omega\mid\sigma(\omega)\leq s,\,X_{s}(\omega)\in B\},
\end{equation*}
which is easily shown to belong to the product $\sigma$-field
$\sigmafield{B}([0,t])\otimes\sigmafield{F}_{t}$ by the sample path
right-continuity of $X$ and the assumption that
$\sigma$ is an $\sigmafield{F}_{*}$-stopping time.
Therefore \cite[Chapter III, 13 and 33]{DM} imply that
$\{\dot{\sigma}_{B,\sigma}<t\}\in\sigmafield{F}_{t}$,
which means that $\dot{\sigma}_{B,\sigma}$, and hence also
$\tau_{B,\sigma}$, are $\sigmafield{F}_{*}$-stopping times
since $\sigmafield{F}_{*}$ is right-continuous.
\end{proof}

Now we state the main theorem of this section.
Recall for $\sigma:\Omega\to[0,\infty]$ that the map
$X_{\sigma}:\Omega\to M_{\cemetery}$ is defined as
$X_{\sigma}(\omega):=X_{\sigma(\omega)}(\omega)$ and that $X_{\sigma}$ is
$\sigmafield{F}_{\infty}/\sigmafield{B}(M_{\cemetery})$-measurable
if $\sigma$ is $\sigmafield{F}_{\infty}$-measurable
by the sample path right-continuity of $X$.

\begin{theorem}[A multiple Dynkin-Hunt formula]\label{thm:multiple-DH}
Let $U$ be a non-empty open subset of $M$, let $B\in\sigmafield{B}(M)$ satisfy
$\overline{B}\subset U$ and define $\sigmafield{F}_{*}$-stopping times
$\tau_{n}$ and $\sigma_{n}$, $n\in\mathbb{N}$, by
\begin{equation}\label{eq:multiple-DH-stopping-times}
\tau_{1}:=\tau_{U}\qquad\textrm{and inductively}\qquad
\sigma_{n}:=\dot{\sigma}_{B,\tau_{n}}\quad\textrm{and}\quad
\tau_{n+1}:=\tau_{U,\sigma_{n}},\quad n\in\mathbb{N}.
\end{equation}
Then for any $u\in\functionspace{B}^{+}(M)\cup\functionspace{B}_{\mathrm{b}}(M)$
with $u|_{M\setminus B}=0$ and any $(t,x)\in[0,\infty)\times M$,
\begin{equation}\label{eq:multiple-DH}
\functionspace{P}_{t}u(x)=\functionspace{P}^{U}_{t}u(x)
	+\sum_{n\in\mathbb{N}}\mathbb{E}_{x}\bigl[\ind{\{\sigma_{n}\leq t\}}
		\functionspace{P}^{U}_{t-\sigma_{n}}u(X_{\sigma_{n}})\bigr].
\end{equation}
\end{theorem}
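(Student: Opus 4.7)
The plan is to partition the trajectory at time $t$ into ``phases inside $U$'' and to compute each contribution via a strong Markov argument at the appropriate stopping time. Since $u=u\ind{B}$ and $B\subset U$, the integrand $u(X_t)$ in $\functionspace{P}_t u(x)=\mathbb{E}_x[u(X_t)]$ is supported on the event $\{X_t\in B\}$. First I would establish the pathwise identity
\begin{equation*}
\ind{\{X_t\in B\}}=\ind{\{X_t\in B,\,t<\tau_1\}}+\sum_{n\in\mathbb{N}}\ind{\{X_t\in B,\,\sigma_n\leq t<\tau_{n+1}\}}.
\end{equation*}
Mutual disjointness follows from the monotone chain $\tau_n\leq\sigma_n\leq\tau_{n+1}$ and the observation that on $[\tau_n,\sigma_n)$ one has $X_\cdot\notin B$ by definition of $\sigma_n$. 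For exhaustiveness, given $X_t\in B$ with $t\geq\tau_1$ I take $n:=\max\{k\geq 1\mid\sigma_k\leq t\}$; such an $n$ exists because $X_t\in B$ together with $t\geq\tau_1$ forces $\sigma_1\leq t$, and one must have $t<\tau_{n+1}$ since otherwise $X$ would have exited $U$ after $\sigma_n$ and would need to re-enter $B$ to be there at time $t$, contradicting the maximality of $n$.

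Second, I would identify each term of the decomposition with the desired expression. The first term equals $\functionspace{P}^U_t u(x)$ by \eqref{eq:PtEx-part}, using $u=u\ind{B}$ and $B\subset U$. For each $n\geq 1$, the shift relations $\tau_{n+1}=\sigma_n+\tau_U\circ\theta_{\sigma_n}$ and $X_t=X_{t-\sigma_n}\circ\theta_{\sigma_n}$ on $\{\sigma_n\leq t\}$ yield the pathwise identity
\begin{equation*}
u(X_t)\ind{\{\sigma_n\leq t<\tau_{n+1}\}}=\ind{\{\sigma_n\leq t\}}\bigl(u(X_{t-\sigma_n})\ind{\{t-\sigma_n<\tau_U\}}\bigr)\circ\theta_{\sigma_n}.
\end{equation*}
I would then apply the strong Markov property at the $\sigmafield{F}_*$-stopping time $\sigma_n$ (Proposition \ref{prop:entrance-exit-after-sigma}) to the jointly Borel measurable kernel $(s,y)\mapsto\functionspace{P}^U_{t-s}u(y)\ind{\{s\leq t\}}$ on $[0,\infty)\times M$, whose joint measurability is the analogue of \eqref{eq:Pt-measurable} for the part process $X^U$. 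The fact that the post-$\sigma_n$ functional depends on $\sigma_n$ through the random time $t-\sigma_n$ is accommodated by exploiting the $\sigmafield{F}_{\sigma_n}$-measurability of $(\sigma_n,X_{\sigma_n})$ and conditioning on its value, giving
\begin{equation*}
\mathbb{E}_x\bigl[u(X_t)\ind{\{\sigma_n\leq t<\tau_{n+1}\}}\bigr]=\mathbb{E}_x\bigl[\ind{\{\sigma_n\leq t\}}\functionspace{P}^U_{t-\sigma_n}u(X_{\sigma_n})\bigr].
\end{equation*}

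Finally, summing over $n\in\mathbb{N}$ yields \eqref{eq:multiple-DH}. The interchange of sum and expectation is automatic by monotone convergence for $u\in\functionspace{B}^+(M)$, and for $u\in\functionspace{B}_{\mathrm{b}}(M)$ one decomposes $u=u^+-u^-$ and notes absolute convergence, since the indicator events $\{\sigma_n\leq t<\tau_{n+1}\}$ are pairwise disjoint and $|u|\leq\|u\|_{\mathrm{sup}}$. The main technical hurdle I expect is the strong Markov step with the $\sigma_n$-dependent time $t-\sigma_n$ appearing inside the kernel, which requires the joint Borel measurability mentioned above and care with the $\sigmafield{F}_{\sigma_n}$-measurability of $\sigma_n$ itself; the pathwise phase decomposition is essentially combinatorial once one uses the openness of $U$ together with $\overline{B}\subset U$ to force $\tau_{n+1}>\sigma_n$ strictly (and hence $\sigma_{n+1}>\sigma_n$).
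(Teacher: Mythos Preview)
Your overall strategy matches the paper's: decompose $\{X_t\in B\}$ into the disjoint events $\{t<\tau_1\}$ and $\{\sigma_n\leq t<\tau_{n+1}\}$, $n\in\mathbb{N}$, then apply the strong Markov property at each $\sigma_n$. The strong Markov step with the $\sigma_n$-dependent residual time is exactly what the paper packages as Proposition~\ref{prop:strong-Markov}, and your treatment of it is correct in spirit.

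There is, however, a genuine gap in your exhaustiveness argument. You take $n:=\max\{k\geq 1\mid\sigma_k\leq t\}$ and justify its existence only by showing the set is nonempty ($\sigma_1\leq t$). But the strict monotonicity $\sigma_{n+1}>\sigma_n$ that you note at the end does not by itself guarantee that this set is \emph{finite}: the $\sigma_n$ could accumulate at a finite limit $\tau\leq t$, in which case no maximum exists and your claimed pathwise identity fails on such paths. This is not ``essentially combinatorial''; one must rule out infinitely many alternations between $M\setminus U$ and $\overline{B}$ before time $t$. The paper does this via the \emph{quasi-left-continuity} of the Hunt process: setting $\tau:=\lim_n\tau_n=\lim_n\sigma_n$, on $\{\tau<\zeta\}$ one has $X_{\tau_n}\to X_\tau$ and $X_{\sigma_n}\to X_\tau$ $\mathbb{P}_x$-a.s., yet $X_{\tau_n}\in M\setminus U$ and $X_{\sigma_n}\in\overline{B}$ with $\overline{B}\cap(M\setminus U)=\emptyset$, forcing $\mathbb{P}_x[\tau<\zeta]=0$. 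Since $X_t\in B$ implies $t<\zeta$, this gives $t<\tau$ a.s.\ on $\{X_t\in B\}$, so only finitely many $\sigma_n$ lie below $t$. Without this step your decomposition only yields the inequality $\functionspace{P}_t u(x)\geq\functionspace{P}^U_t u(x)+\sum_n\mathbb{E}_x[\ind{\{\sigma_n\leq t\}}\functionspace{P}^U_{t-\sigma_n}u(X_{\sigma_n})]$ for $u\geq 0$, not the claimed equality.
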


Note that by \eqref{eq:Pt-measurable} for
$\{\functionspace{P}^{U}_{t}\}_{t\in[0,\infty)}$, the random variable
$\ind{\{\sigma_{n}\leq t\}}\functionspace{P}^{U}_{t-\sigma_{n}}u(X_{\sigma_{n}})$
in \eqref{eq:multiple-DH} is $\sigmafield{F}_{\infty}$-measurable
for any $u\in\functionspace{B}^{+}(M)\cup\functionspace{B}_{\mathrm{b}}(M)$,
any $t\in[0,\infty)$ and any $n\in\mathbb{N}$.

Recall that the \emph{Dynkin-Hunt formula} refers to
(the heat kernel version of) the following equality,
which is an easy consequence of Proposition \ref{prop:strong-Markov} below:
for any non-empty open subset $U$ of $M$,
any $u\in\functionspace{B}^{+}(M)\cup\functionspace{B}_{\mathrm{b}}(M)$
and any $(t,x)\in[0,\infty)\times M$,
\begin{equation}\label{eq:DH}
\functionspace{P}_{t}u(x)=\functionspace{P}^{U}_{t}u(x)
	+\mathbb{E}_{x}\bigl[\ind{\{\tau_{U}\leq t\}}
		\functionspace{P}_{t-\tau_{U}}u(X_{\tau_{U}})\bigr].
\end{equation}
\eqref{eq:multiple-DH} can be regarded as an indefinite iteration of
\eqref{eq:DH} through restarting $X$ at the entrance time
$\dot{\sigma}_{B,\tau_{U}}$ of $B$ after $\tau_{U}$, which is why we call
\eqref{eq:multiple-DH} a \emph{multiple Dynkin-Hunt formula}.

For the proof of Theorem \ref{thm:multiple-DH} we need a variation of
the strong Markov property of $X$ as in the following proposition.
Recall for each $\sigmafield{F}_{*}$-stopping time $\sigma$ that the collection
\begin{equation}\label{eq:Fsigma}
\sigmafield{F}_{\sigma}:=\{A\in\sigmafield{F}_{\infty}\mid
	A\cap\{\sigma\leq t\}\in\sigmafield{F}_{t}\textrm{ for any }t\in[0,\infty)\}
\end{equation}
is a $\sigma$-field in $\Omega$ with respect to which $\sigma$ is measurable,
that $X_{\sigma}$ is $\sigmafield{F}_{\sigma}/\sigmafield{B}^{*}(M_{\cemetery})$-measurable
by \cite[Exercise A.1.20-(ii)]{CF}, and that the map $\theta_{\sigma}:\Omega\to\Omega$,
$\theta_{\sigma}(\omega):=\theta_{\sigma(\omega)}(\omega)$, is
$\sigmafield{F}_{\infty}/\sigmafield{F}_{\infty}$-measurable by \cite[Theorem A.1.21]{CF}.

\begin{proposition}\label{prop:strong-Markov}
Let $\sigma$ be an $\sigmafield{F}_{*}$-stopping time, let $\tau:\Omega\to[0,\infty]$
be $\sigmafield{F}_{\infty}$-measurable and let $T:\Omega\to[0,\infty]$ be
$\sigmafield{F}_{\sigma}$-measurable and satisfy $\sigma(\omega)\leq T(\omega)$
for any $\omega\in\Omega$. Then for any $x\in M_{\cemetery}$ and any
$u\in\functionspace{B}_{\mathrm{b}}(M_{\cemetery})$,
it holds that for $\mathbb{P}_{x}$-a.e.\ $\omega\in\{\sigma<\infty\}$,
\begin{equation}\label{eq:strong-Markov}
\mathbb{E}_{x}\bigl[u(X_{T})\ind{\{T<\sigma+\tau\circ\theta_{\sigma}\}}\bigm|\sigmafield{F}_{\sigma}\bigr](\omega)
	=\mathbb{E}_{X_{\sigma}(\omega)}\bigl[u(X_{T(\omega)-\sigma(\omega)})\ind{\{T(\omega)-\sigma(\omega)<\tau\}}\bigr].
\end{equation}
\end{proposition}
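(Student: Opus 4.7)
The plan is to reduce to the standard strong Markov property by bootstrapping through three layers of approximation: on the structure of $T-\sigma$ (constant, then simple, then general), through right-continuity of sample paths, and on $u$ (continuous, then bounded Borel) via the functional monotone class theorem. As a preliminary reduction, \eqref{eq:strong-Markov} is equivalent (up to checking $\sigmafield{F}_{\sigma}$-measurability of its right-hand side) to the integrated identity asserting that both sides, when multiplied by $\ind{A}$, have the same $\mathbb{P}_{x}$-expectation for every $A\in\sigmafield{F}_{\sigma}$ with $A\subset\{\sigma<\infty\}$. The required $\sigmafield{F}_{\sigma}$-measurability of the right-hand side follows from joint universal measurability of the kernel $(z,s)\mapsto\mathbb{E}_{z}[u(X_{s})\ind{\{s<\tau\}}]$, which in turn comes from Fubini applied to the jointly $\sigmafield{B}([0,\infty])\otimes\sigmafield{F}_{\infty}$-measurable integrand $(s,\omega')\mapsto u(X_{s}(\omega'))\ind{\{s<\tau(\omega')\}}$ (whose joint measurability rests on the right-continuity of $X$).

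Assume first that $T-\sigma\equiv c$ on $\{\sigma<\infty\}$. Then on this event $X_{T}=X_{c}\circ\theta_{\sigma}$ and $\ind{\{T<\sigma+\tau\circ\theta_{\sigma}\}}=\ind{\{c<\tau\}}\circ\theta_{\sigma}$, so the left-hand integrand of \eqref{eq:strong-Markov} equals $(u(X_{c})\ind{\{c<\tau\}})\circ\theta_{\sigma}$; the standard strong Markov property applied to the bounded $\sigmafield{F}_{\infty}$-measurable variable $Y:=u(X_{c})\ind{\{c<\tau\}}$ then yields the claim. For a simple $T-\sigma=\sum_{i}c_{i}\ind{A_{i}}$ with pairwise disjoint $A_{i}\in\sigmafield{F}_{\sigma}$ exhausting $\{\sigma<\infty\}$, I would split any test set $A\in\sigmafield{F}_{\sigma}$ as $A=\bigsqcup_{i}(A\cap A_{i})$ and apply the constant case to each piece, since $A\cap A_{i}\in\sigmafield{F}_{\sigma}$. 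For general $\sigmafield{F}_{\sigma}$-measurable $S:=T-\sigma\geq 0$, approximate from above by $S_{n}:=2^{-n}\lceil 2^{n}S\rceil$ (with $S_{n}=\infty$ where $S=\infty$), put $T_{n}:=\sigma+S_{n}\geq\sigma$, and apply the simple case to $T_{n}$. The key observation for $n\to\infty$ is that for any $\omega'\in\Omega$ and any $s_{n}\downarrow s$ in $[0,\infty]$, right-continuity of $X$ gives $X_{s_{n}}(\omega')\to X_{s}(\omega')$, while $s_{n}\geq s$ combined with case analysis on whether $s<$, $=$, or $>\tau(\omega')$ yields $\ind{\{s_{n}<\tau(\omega')\}}\to\ind{\{s<\tau(\omega')\}}$.

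For $u\in C(M_{\cemetery})$, continuity of $u$ then gives $u(X_{s_{n}})\to u(X_{s})$, and bounded convergence in both sides of the simple-case identity for $T_{n}$ (including inside the inner expectation $\mathbb{E}_{X_{\sigma}(\omega)}[\cdot]$ for each fixed $\omega$) produces the identity for $T$. Finally, to extend from $u\in C(M_{\cemetery})$ to $u\in\functionspace{B}_{\mathrm{b}}(M_{\cemetery})$, apply the functional monotone class theorem: the vector space of $u\in\functionspace{B}_{\mathrm{b}}(M_{\cemetery})$ for which the integrated identity holds contains the multiplicative class $C(M_{\cemetery})$ (which generates $\sigmafield{B}(M_{\cemetery})$, since $M_{\cemetery}$ is a compact metric space) and is closed under bounded monotone limits by dominated convergence applied inside the integrals.

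The main obstacle is precisely this last extension from continuous to Borel $u$: because $u$ is only measurable, $u(X_{T_{n}})$ need not converge to $u(X_{T})$ despite the right-continuity of $X$, which is why the dyadic-approximation argument must be carried out first for continuous $u$ and then boosted via a monotone class argument. A secondary but genuinely delicate technical point, though it admits a standard resolution, is the measurability bookkeeping in the right-hand side of \eqref{eq:strong-Markov}, which relies on the interplay between joint measurability of the integrand and universal measurability of $z\mapsto\mathbb{E}_{z}[\cdot]$.
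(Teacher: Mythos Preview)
Your proof is correct and follows essentially the same route as the paper: dyadic approximation of $T$ from above, strong Markov at $\sigma$ on each constant piece, passage to the limit via right-continuity for $u\in C(M_{\cemetery})$, then extension to $u\in\functionspace{B}_{\mathrm{b}}(M_{\cemetery})$ by a monotone/Dynkin class argument. The one minor organizational difference is that the paper obtains $\sigmafield{F}_{\sigma}$-measurability of the right-hand side as a byproduct of the same dyadic approximation (each $Y_{u,n}$ being $\sigmafield{F}_{\sigma}$-measurable on its pieces by \cite[Exercise A.1.20]{CF}, hence so is the limit), rather than by your direct joint-universal-measurability argument---which neatly sidesteps the bookkeeping you yourself flag as delicate.
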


\begin{proof}
We follow \cite[Proofs of Proposition 2.6.17 and Corollary 2.6.18]{KS}.
For $u\in\sigmafield{B}_{\mathrm{b}}(M_{\cemetery})$, let
$Y_{u}(\omega)$ denote the right-hand side of \eqref{eq:strong-Markov} for
$\omega\in\{\sigma<\infty\}$ and set $Y_{u}(\omega):=0$ for $\omega\in\{\sigma=\infty\}$.
Let $x\in M_{\cemetery}$. For the proof of \eqref{eq:strong-Markov} it suffices
to show that $Y_{u}:\Omega\to\mathbb{R}$ possesses the following properties:
\begin{equation}\label{eq:strong-Markov-proof}
Y_{u}\textrm{ is }\sigmafield{F}_{\sigma}\textrm{-measurable}\mspace{15mu}\textrm{and}\mspace{15mu}
\mathbb{E}_{x}\bigl[u(X_{T})\ind{\{T<\sigma+\tau\circ\theta_{\sigma}\}}\ind{A}\bigr]
	=\mathbb{E}_{x}[Y_{u}\ind{A}]
	\textrm{ for any }A\in\sigmafield{F}_{\sigma}.
\end{equation}

We first prove \eqref{eq:strong-Markov-proof} for $u\in C(M_{\cemetery})$.
Let $n\in\mathbb{N}$ and define $T_{n}:\Omega\to[0,\infty]$ by
\begin{equation}\label{eq:strong-Markov-Tn}
T_{n}|_{\{\sigma+(k-1)2^{-n}\leq T<\sigma+k2^{-n}\}}:=\sigma+k2^{-n},
	\quad k\in\mathbb{N}\qquad\textrm{and}\qquad T_{n}|_{\{T=\infty\}}:=\infty,
\end{equation}
so that $T_{n}$ is $\sigmafield{F}_{\sigma}$-measurable and
$T_{n}-2^{-n}\leq T\leq T_{n}$. Also define $Y_{u,n}$
in the same way as $Y_{u}$ with $T_{n}$ in place of $T$.
Then $Y_{u,n}|_{\{T=\infty\}}=0=Y_{u}|_{\{T=\infty\}}$, and
$\lim_{n\to\infty}Y_{u,n}=Y_{u}$ on $\{T<\infty\}$ by
$T_{n}-2^{-n}\leq T\leq T_{n}$, the sample path right-continuity of $X$
and dominated convergence. Also for $k\in\mathbb{N}$,
on $\{\sigma+(k-1)2^{-n}\leq T<\sigma+k2^{-n}\}\in\sigmafield{F}_{\sigma}$ we have
$Y_{u,n}=\mathbb{E}_{X_{\sigma}}[u(X_{k2^{-n}})\ind{\{k2^{-n}<\tau\}}]$,
and since the latter is $\sigmafield{F}_{\sigma}$-measurable by
\cite[Exercise A.1.20]{CF} so are $Y_{u,n}$ and $Y_{u}=\lim_{n\to\infty}Y_{u,n}$.
Now for $A\in\sigmafield{F}_{\sigma}$, thanks to dominated convergence,
\begin{align*}
\mathbb{E}_{x}\bigl[u(X_{T_{n}})\ind{\{T_{n}<\sigma+\tau\circ\theta_{\sigma}\}}\ind{A}\bigr]
	&=\sum_{k\in\mathbb{N}}\mathbb{E}_{x}\bigl[\ind{A\cap\{T_{n}=\sigma+k2^{-n}<\infty\}}\bigl((u(X_{k2^{-n}})\ind{\{k2^{-n}<\tau\}})\circ\theta_{\sigma}\bigr)\bigr]\\
&=\sum_{k\in\mathbb{N}}\mathbb{E}_{x}\bigl[\ind{A\cap\{T_{n}=\sigma+k2^{-n}<\infty\}}\mathbb{E}_{X_{\sigma}}[u(X_{k2^{-n}})\ind{\{k2^{-n}<\tau\}}]\bigr]\\
	&=\mathbb{E}_{x}[Y_{u,n}\ind{A}]
\end{align*}
by the strong Markov property \cite[Theorem A.1.21]{CF} of $X$ at time $\sigma$,
and we conclude \eqref{eq:strong-Markov-proof} by using $T_{n}-2^{-n}\leq T\leq T_{n}$
and the sample path right-continuity of $X$ to let $n\to\infty$.

Note that for $u\in\functionspace{B}_{\mathrm{b}}(M_{\cemetery})$ and
$\{u_{n}\}_{n\in\mathbb{N}}\subset\functionspace{B}_{\mathrm{b}}(M_{\cemetery})$
such that $\sup_{n\in\mathbb{N}}\|u_{n}\|_{\mathrm{sup}}<\infty$ and
$\lim_{n\to\infty}u_{n}(y)=u(y)$ for any $y\in M_{\cemetery}$,
if $u_{n}$ satisfies \eqref{eq:strong-Markov-proof} for any $n\in\mathbb{N}$
then so does $u$ by dominated convergence. Therefore it follows from
the previous paragraph that \eqref{eq:strong-Markov-proof} holds for
$u=\ind{B}$ with $B\subset M_{\cemetery}$ closed in $M_{\cemetery}$, hence
also with $B\in\sigmafield{B}(M_{\cemetery})$ by the Dynkin class theorem
\cite[Chapter 0, Theorem 2.2]{BG}, and thus for any
$u\in\functionspace{B}_{\mathrm{b}}(M_{\cemetery})$.
\end{proof}

\begin{proof}[Proof of Theorem \textup{\ref{thm:multiple-DH}}]
For $n\in\mathbb{N}$, $\tau_{n}\leq\sigma_{n}\leq\tau_{n+1}$ by
\eqref{eq:entrance-exit-after-sigma} and \eqref{eq:multiple-DH-stopping-times},
and the sample path right-continuity of $X$ implies that
$X_{\tau_{n}}\in M\setminus U$ and $\tau_{n}<\sigma_{n}$ on $\{\tau_{n}<\zeta\}$ and that
$X_{\sigma_{n}}\in\overline{B}$ and $\sigma_{n}<\tau_{n+1}\wedge\zeta$ on $\{\sigma_{n}<\infty\}$.
Moreover, setting $\tau:=\lim_{n\to\infty}\tau_{n}=\lim_{n\to\infty}\sigma_{n}$,
we see from the quasi-left-continuity \cite[(A.2.4)]{FOT} of $X$ that
for any $x\in M$,
\begin{equation}\label{eq:tau-sigma-lim}
\mathbb{P}_{x}[\tau<\zeta]
	=\mathbb{P}_{x}[\tau<\zeta,\,\lim\nolimits_{n\to\infty}X_{\tau_{n}}=X_{\tau}=\lim\nolimits_{n\to\infty}X_{\sigma_{n}}]
	=\mathbb{P}_{x}[\emptyset]=0.
\end{equation}

Let $(t,x)\in[0,\infty)\times M$.
Then for each $\omega\in\{\textrm{$X_{t}\in B$, $\zeta\leq\tau$}\}$,
$t<\zeta(\omega)\leq\tau(\omega)$ and hence either $t<\tau_{1}(\omega)$,
or $\tau_{n}(\omega)\leq t<\tau_{n+1}(\omega)$ for some $n\in\mathbb{N}$, whence
$\sigma_{n}(\omega)\leq t<\tau_{n+1}(\omega)$ by $X_{t}(\omega)\in B$; namely
$\{\textrm{$X_{t}\in B$, $\zeta\leq\tau$}\}\subset
	\{t<\tau_{1}\}\cup\bigcup_{n\in\mathbb{N}}\{\sigma_{n}\leq t<\tau_{n+1}\}$,
and this union is disjoint. Therefore for any
$u\in\functionspace{B}_{\mathrm{b}}(M)$ with $u|_{M\setminus B}=0$, noting that
$\tau_{n+1}=\sigma_{n}+\tau_{U}\circ\theta_{\sigma_{n}}$ for any $n\in\mathbb{N}$
and using \eqref{eq:tau-sigma-lim}, dominated convergence and
Proposition \ref{prop:strong-Markov}, we obtain
\begin{align*}
\functionspace{P}_{t}u(x)&=\mathbb{E}_{x}[u(X_{t})]
	=\mathbb{E}_{x}[u(X_{t})\ind{\{X_{t}\in B,\,\zeta\leq\tau\}}]\\
&=\mathbb{E}_{x}\biggl[u(X_{t})\ind{\{X_{t}\in B,\,\zeta\leq\tau\}}\biggl(\ind{\{t<\tau_{1}\}}
	+\sum_{n\in\mathbb{N}}\ind{\{\sigma_{n}\leq t<\tau_{n+1}\}}\biggr)\biggr]\\
&=\mathbb{E}_{x}[u(X_{t})\ind{\{t<\tau_{U}\}}]
	+\sum_{n\in\mathbb{N}}\mathbb{E}_{x}[u(X_{t})\ind{\{\sigma_{n}\leq t<\sigma_{n}+\tau_{U}\circ\theta_{\sigma_{n}}\}}]\\
&=\functionspace{P}^{U}_{t}u(x)
	+\sum_{n\in\mathbb{N}}\mathbb{E}_{x}\bigl[\ind{\{\sigma_{n}\leq t\}}\mathbb{E}_{x}\bigl[u(X_{t})\ind{\{t<\sigma_{n}\wedge t+\tau_{U}\circ\theta_{\sigma_{n}\wedge t}\}}\bigm|\sigmafield{F}_{\sigma_{n}\wedge t}\bigr]\bigr]\\
&=\functionspace{P}^{U}_{t}u(x)
	+\sum_{n\in\mathbb{N}}\int_{\{\sigma_{n}\leq t\}}\mathbb{E}_{X_{\sigma_{n}\wedge t}(\omega)}\bigl[u(X_{t-\sigma_{n}(\omega)\wedge t})\ind{\{t-\sigma_{n}(\omega)\wedge t<\tau_{U}\}}\bigr]\,d\mathbb{P}_{x}(\omega)\\
&=\functionspace{P}^{U}_{t}u(x)
	+\sum_{n\in\mathbb{N}}\mathbb{E}_{x}\bigl[\ind{\{\sigma_{n}\leq t\}}\functionspace{P}^{U}_{t-\sigma_{n}}u(X_{\sigma_{n}})\bigr],
\end{align*}
where the equality in the fourth line holds since $\{\sigma_{n}\leq t\}\in\sigmafield{F}_{\sigma_{n}\wedge t}$
by \cite[Lemma 1.2.16]{KS}. Thus we have proved \eqref{eq:multiple-DH} for
$u\in\functionspace{B}_{\mathrm{b}}(M)$ with $u|_{M\setminus B}=0$,
which easily implies \eqref{eq:multiple-DH} for $u\in\functionspace{B}^{+}(M)$
with $u|_{M\setminus B}=0$ by monotone convergence.
\end{proof}

\section{Symmetry of a Hunt process and the associated Dirichlet form}\label{sec:Dirichlet-form}

In this section, assuming the symmetry of our Hunt process $X$, we first recall
that such $X$ naturally gives rise to a symmetric Dirichlet form, and then
introduce related potential theoretic notions.
We refer the reader to \cite{FOT,CF} for further details.

\subsection{The Dirichlet form of a symmetric Hunt process}\label{ssec:Dirichlet-form}

In the rest of this paper, we fix a metric $d$ on $M$ compatible with
the topology of $M$, and a Radon measure $\mu$ on $M$ with full support,
i.e., a Borel measure on $M$ such that $\mu(K)<\infty$ for any
$K\subset M$ compact and $\mu(U)>0$ for any $U\subset M$ non-empty open.
We set $B(x,r):=\{y\in M\mid d(x,y)<r\}$ for $(x,r)\in M\times(0,\infty)$
and $\diam A:=\sup_{x,y\in A}d(x,y)$ for $A\subset M$.
For $q\in[1,\infty)$, we set
$\|u\|_{q}:=(\int_{M}|u|^{q}d\mu)^{1/q}$ for $u\in\functionspace{B}(M)$ and
$\functionspace{B}L^{q}(M,\mu):=\{u\in\functionspace{B}(M)\mid\|u\|_{q}<\infty\}$,
and we also set $\langle u,v\rangle:=\int_{M}uv\,d\mu$ for
$u,v\in\functionspace{B}^{+}(M)$ and for $u,v\in\functionspace{B}(M)$
with $\|uv\|_{1}<\infty$. For $\|\cdot\|_{q}$ and $\langle\cdot,\cdot\rangle$,
we use the same notation for $\mu$-equivalence classes of functions as well.

Now we assume that $X$ is \emph{$\mu$-symmetric},
i.e., $\langle\functionspace{P}_{t}u,v\rangle=\langle u,\functionspace{P}_{t}v\rangle$
for any $t\in(0,\infty)$ and any $u,v\in\functionspace{B}^{+}(M)$.
Then for each $t\in(0,\infty)$, as in \cite[(1.4.13)]{FOT} we can easily verify that
$\|\functionspace{P}_{t}u\|_{2}\leq\|u\|_{2}$ for any $u\in\functionspace{B}^{+}(M)$,
so that $\functionspace{P}_{t}u$ is defined $\mu$-a.e.\ and determines an
element $T_{t}u$ of $L^{2}(M,\mu)$ for each $u\in L^{2}(M,\mu)$ independently of a
particular choice of a $\mu$-version of $u$. Thus the transition function
$\{\functionspace{P}_{t}\}_{t\in[0,\infty)}$ of $X$ canonically induces a
symmetric contraction semigroup $\{T_{t}\}_{t\in(0,\infty)}$ on $L^{2}(M,\mu)$
which is also \emph{Markovian}, i.e., $0\leq T_{t}u\leq 1$ $\mu$-a.e.\ for any
$t\in(0,\infty)$ and any $u\in L^{2}(M,\mu)$ with $0\leq u\leq 1$ $\mu$-a.e.
This semigroup $\{T_{t}\}_{t\in(0,\infty)}$ is
in fact strongly continuous thanks to the sample path right-continuity of $X$
as shown in \cite[Lemma 1.4.3-(i)]{FOT} and hence determines a symmetric
Dirichlet form $(\functionspace{E},\functionspace{F})$ on $L^{2}(M,\mu)$
by \cite[Lemma 1.3.4-(i) and Theorem 1.4.1]{FOT}. Namely, we have a dense linear
subspace $\functionspace{F}$ of $L^{2}(M,\mu)$ and a non-negative definite symmetric
bilinear form $\functionspace{E}:\functionspace{F}\times\functionspace{F}\to\mathbb{R}$
given by
\begin{equation}\label{eq:Dirichlet-form}
\begin{split}
\functionspace{F}&:=\Bigl\{u\in L^{2}(M,\mu)\Bigm|\lim_{t\downarrow 0}t^{-1}\langle u-T_{t}u,u\rangle<\infty\Bigr\},\\
&\functionspace{E}(u,v):=\lim_{t\downarrow 0}t^{-1}\langle u-T_{t}u,v\rangle,\qquad u,v\in\functionspace{F},
\end{split}
\end{equation}
respectively, and $(\functionspace{E},\functionspace{F})$ is \emph{closed}
(i.e., $\functionspace{F}$ forms a Hilbert space with inner product
$\functionspace{E}_{1}:=\functionspace{E}+\langle\cdot,\cdot\rangle$) and
\emph{Markovian} (i.e., $u^{+}\wedge 1\in\functionspace{F}$ and
$\functionspace{E}(u^{+}\wedge 1,u^{+}\wedge 1)\leq\functionspace{E}(u,u)$
for any $u\in\functionspace{F}$). $(\functionspace{E},\functionspace{F})$
is called the \emph{Dirichlet form of the $\mu$-symmetric Hunt process $X$}.
Note that by \cite[Lemma 1.3.3-(i)]{FOT},
\begin{equation}\label{eq:TtL2-F}
T_{t}(L^{2}(M,\mu))\subset\functionspace{F}\qquad\textrm{for any }t\in(0,\infty).
\end{equation}

In what follows we further assume that the Dirichlet form
$(\functionspace{E},\functionspace{F})$ of $X$ is \emph{regular on $L^{2}(M,\mu)$},
i.e., that $\functionspace{F}\cap C_{\mathrm{c}}(M)$ is dense
both in $(\functionspace{F},\functionspace{E}_{1})$
and in $(C_{\mathrm{c}}(M),\|\cdot\|_{\mathrm{sup}})$.
Note that this framework actually contains any regular symmetric Dirichlet form
on any locally compact separable metric space $(M,d)$ equipped with a Radon measure
$\mu$ with full support, since any such form can be realized as the Dirichlet
form of some $\mu$-symmetric Hunt process on $(M,\sigmafield{B}(M))$
by the fundamental result \cite[Theorem 7.2.1]{FOT} from Dirichlet form theory.

\subsection{Capacity, quasi-continuity and exceptional sets}\label{ssec:capacity}

The following potential theoretic notions are adopted from
\cite[Section 2.1]{FOT} and \cite[Sections 1.2 and 1.3]{CF}.

\begin{definition}\label{dfn:capacity}
\begin{itemize}[label=\textup{(1)},align=left,leftmargin=*]
\item[\textup{(1)}]We define the \emph{$1$-capacity $\Capa_{1}$ associated with $(M,\mu,\functionspace{E},\functionspace{F})$} by
\begin{align}
\capa_{1}(U)
	&:=\inf\{\functionspace{E}_{1}(u,u)\mid u\in\functionspace{F},\ u\geq 1\ \mu\textrm{-a.e.\ on }U\}
	&&\textrm{for }U\subset M\textrm{ open in }M,\notag\\
\Capa_{1}(A)
	&:=\inf\{\capa_{1}(U)\mid U\subset M\textrm{ open in }M,\ A\subset U\}
	&&\textrm{for }A\subset M
\label{eq:capacity}
\end{align}
(recall $\functionspace{E}_{1}:=\functionspace{E}+\langle\cdot,\cdot\rangle$).
Clearly, $\Capa_{1}$ extends $\capa_{1}$ and
$\mu(A)\leq \Capa_{1}(A)$ for $A\in\sigmafield{B}(M)$.
\item[\textup{(2)}]A subset $N$ of $M$ is called \emph{$\functionspace{E}$-polar} if and only if
$\Capa_{1}(N)=0$. Moreover, if $A\subset M$ and $\functionspace{S}(x)$ is
a statement in $x\in A$, then we say that
\emph{$\functionspace{S}$ holds $\functionspace{E}$-q.e.\ on $A$} if and only if
$\{x\in A\mid\textrm{$\functionspace{S}(x)$ fails}\}$ is $\functionspace{E}$-polar.
When $A=M$ we simply say \emph{``$\functionspace{S}$ holds $\functionspace{E}$-q.e."}\ instead.
\item[\textup{(3)}]Let $U\subset M$ be open in $M$. A function $u:U\setminus N\to[-\infty,\infty]$,
with $N\subset M$ $\functionspace{E}$-polar, is called
\emph{$\functionspace{E}$-quasi-continuous on $U$} if and only if
for any $\varepsilon\in(0,\infty)$ there exists an open subset
$V$ of $M$ with $U\cap N\subset V$ and $\Capa_{1}(V)<\varepsilon$ such that
$u|_{U\setminus V}$ is $\mathbb{R}$-valued and continuous.
When $U=M$, such $u$ is simply called
\emph{$\functionspace{E}$-quasi-continuous} instead.
\end{itemize}
\end{definition}

\begin{remark}\label{rmk:capacity}
There are several equivalent ways of defining the notions of
$\functionspace{E}$-polar sets and $\functionspace{E}$-quasi-continuous functions.
See \cite[Section 1.2 and Theorem 1.3.14]{CF} in this connection.
\end{remark}

Note that $\Capa_{1}$ is countably subadditive by \cite[Lemma 2.1.2 and Theorem A.1.2]{FOT}.

Let $U\subset M$ be open in $M$. By \cite[Lemma 2.1.4]{FOT}, if $u,v$ are
$\functionspace{E}$-quasi-continuous functions
on $U$ and $u\leq v$ $\mu$-a.e.\ on $U$,
then $u\leq v$ $\functionspace{E}$-q.e.\ on $U$.
In particular, for each $u\in L^{2}(M,\mu)$, an $\functionspace{E}$-quasi-continuous
$\mu$-version of $u$, if it exists, is unique up to $\functionspace{E}$-q.e.
By \cite[Theorem 2.1.3]{FOT}, each $u\in\functionspace{F}$
admits an $\functionspace{E}$-quasi-continuous $\mu$-version,
which is denoted as $\widetilde{u}$.

For each $t\in(0,\infty)$, while $T_{t}u=\functionspace{P}_{t}u$ $\mu$-a.e.\ for
any $u\in L^{2}(M,\mu)$ by the definition of $T_{t}$, more strongly it actually
holds by \cite[Theorem 4.2.3-(i)]{FOT} that for any $u\in\functionspace{B}L^{2}(M,\mu)$,
\begin{equation}\label{eq:Ptu-quasi-continuous}
\functionspace{P}_{t}u\textrm{ is an }\functionspace{E}\textrm{-quasi-continuous }\mu\textrm{-version of }T_{t}u.
\end{equation}

The following definition gives a probabilistic counterpart of the notion of
$\functionspace{E}$-polar sets.

\begin{definition}\label{dfn:properly-exceptional}
A Borel set $N\in\sigmafield{B}(M)$ is called
\emph{properly exceptional for $X$} if and only if $\mu(N)=0$ and
for any $x\in M\setminus N$,
$\mathbb{P}_{x}[\dot{\sigma}_{N}\wedge\hat{\sigma}_{N}=\infty]=1$
or, by \eqref{eq:entrance-leq-entrance-left-limit}, equivalently 
\begin{equation}\label{eq:properly-exceptional}
\mathbb{P}_{x}[\dot{\sigma}_{N}=\infty]=1.
\end{equation}
\end{definition}

Note that
$\{\dot{\sigma}_{N}\wedge\hat{\sigma}_{N}=\infty\}
	=\{\textrm{$X_{0},X_{t},X_{t-}\in M_{\cemetery}\setminus N$ for any $t\in(0,\infty)$}\}
	\in\sigmafield{F}_{\infty}$
and that $\{\dot{\sigma}_{N}=\infty\}
	=\{\textrm{$X_{t}\in M_{\cemetery}\setminus N$ for any $t\in[0,\infty)$}\}
	\in\sigmafield{F}_{\infty}$.
Every properly exceptional set for $X$ is $\functionspace{E}$-polar by
\cite[Theorem 4.2.1-(ii)]{FOT}, and conversely any $\functionspace{E}$-polar
set is included in a Borel properly exceptional set for $X$ by \cite[Theorem 4.1.1]{FOT}.

\subsection{The Dirichlet form of the part process on open sets}\label{ssec:part-form}

Let $U$ be a non-empty open subset of $M$ and set $\mu|_{U}:=\mu|_{\sigmafield{B}(U)}$.
Recall that the part $X^{U}$ of $X$ on $U$ is a Hunt process on
$(U,\sigmafield{B}(U))$ defined in \eqref{eq:part-process} and that
its transition function naturally extends to $(M,\sigmafield{B}(M))$ as
a family $\{\functionspace{P}^{U}_{t}\}_{t\in[0,\infty)}$ of Markovian kernels
on $(M,\sigmafield{B}(M))$ given by \eqref{eq:transition-function-part}.
In the present situation, the assumed $\mu$-symmetry of $X$ implies that
$X^{U}$ is $\mu|_{U}$-symmetric. More precisely,
for any $t\in(0,\infty)$ and any $u,v\in\functionspace{B}^{+}(M)$, we have
$\langle\functionspace{P}^{U}_{t}u,v\rangle=\langle u,\functionspace{P}^{U}_{t}v\rangle$
by \cite[Lemma 4.1.3]{FOT} and hence also
$\|\functionspace{P}^{U}_{t}u\|_{2}\leq\|u\|_{2}$ as in \cite[(1.4.13)]{FOT}.
Thus we obtain a Markovian symmetric contraction semigroup $\{T^{U}_{t}\}_{t\in(0,\infty)}$
on $L^{2}(M,\mu)$ canonically induced by $\{\functionspace{P}^{U}_{t}\}_{t\in(0,\infty)}$
in the same way as for $\{\functionspace{P}_{t}\}_{t\in(0,\infty)}$. Moreover,
under the natural identification of $L^{2}(U,\mu|_{U})$ with the closed linear
subspace $\{u\in L^{2}(M,\mu)\mid\textrm{$u=0$ $\mu$-a.e.\ on $M\setminus U$}\}$
of $L^{2}(M,\mu)$, the strongly continuous Markovian semigroup on $L^{2}(U,\mu|_{U})$
induced by the transition function of $X^{U}$ is easily shown to be given by
$\{T^{U}_{t}|_{L^{2}(U,\mu|_{U})}\}_{t\in(0,\infty)}$, and hence
\eqref{eq:Dirichlet-form} with $T^{U}_{t}$ in place of $T_{t}$ gives the Dirichlet
form $(\functionspace{E}^{U},\functionspace{F}_{U})$ of $X^{U}$. In fact,
\begin{equation}\label{eq:part-form}
\functionspace{F}_{U}
	=\{u\in\functionspace{F}\mid\widetilde{u}=0\ \functionspace{E}\textrm{-q.e.\ on }M\setminus U\}
	\qquad\textrm{and}\qquad
	\functionspace{E}^{U}=\functionspace{E}|_{\functionspace{F}_{U}\times\functionspace{F}_{U}}
\end{equation}
by \cite[Theorem 4.4.2]{FOT} and $(\functionspace{E}^{U},\functionspace{F}_{U})$
is regular on $L^{2}(U,\mu|_{U})$ by \cite[Lemma 1.4.2-(ii) and Corollary 2.3.1]{FOT}.
$(\functionspace{E}^{U},\functionspace{F}_{U})$ is called the
\emph{part of the Dirichlet form $(\functionspace{E},\functionspace{F})$ on $U$}.

For $t\in(0,\infty)$ and $u\in\functionspace{B}L^{2}(M,\mu)$, while
$T^{U}_{t}u=\functionspace{P}^{U}_{t}u$ $\mu$-a.e.\ by definition, more strongly
\begin{equation}\label{eq:Ptu-quasi-continuous-part}
\functionspace{P}^{U}_{t}u\textrm{ is an }\functionspace{E}\textrm{-quasi-continuous }
	\mu\textrm{-version of }T^{U}_{t}u,
\end{equation}
similarly to \eqref{eq:Ptu-quasi-continuous}.
Indeed, since $v:=T^{U}_{t}u\in\functionspace{F}_{U}\subset\functionspace{F}$
by \eqref{eq:TtL2-F} and \eqref{eq:part-form},
$v$ admits an $\functionspace{E}$-quasi-continuous $\mu$-version
$\widetilde{v}$ and then $\widetilde{v}=0=\functionspace{P}^{U}_{t}u$
$\functionspace{E}$-q.e.\ on $M\setminus U$ by \eqref{eq:part-form}.
On the other hand, $(\functionspace{P}^{U}_{t}u)|_{U}$ is a $\mu$-version
of $v|_{U}$ which is $\functionspace{E}$-quasi-continuous on $U$ by
\cite[Theorem 4.4.3]{FOT} and therefore $(\functionspace{P}^{U}_{t}u)|_{U}=\widetilde{v}|_{U}$
$\functionspace{E}$-q.e.\ on $U$ by \cite[Lemma 2.1.4]{FOT}.
Thus $\functionspace{P}^{U}_{t}u=\widetilde{v}$ $\functionspace{E}$-q.e.,
which together with the $\functionspace{E}$-quasi-continuity of $\widetilde{v}$
yields \eqref{eq:Ptu-quasi-continuous-part}.

\section{Localized quasi-everywhere existence of the heat kernel}\label{sec:HK-existence}

As in Section \textup{\ref{sec:Dirichlet-form}}, let $(M,d)$ be a locally compact
separable metric space equipped with a Radon measure $\mu$ with full support, and
let $X$ be a $\mu$-symmetric Hunt process on $(M,\sigmafield{B}(M))$ whose
Dirichlet form $(\functionspace{E},\functionspace{F})$ is regular on $L^{2}(M,\mu)$.
\emph{Throughout the rest of this paper, we fix this setting and follow the notation
introduced in Section \textup{\ref{sec:Dirichlet-form}} in addition to that from
Sections \textup{\ref{sec:Hunt-processes}} and \textup{\ref{sec:multiple-DH}}.}

The purpose of this section is to prove Theorem \ref{thm:HK-existence}
below on the existence of the heat kernel $p^{U}=p^{U}_{t}(x,y)$
for $\{\functionspace{P}^{U}_{t}\}_{t\in(0,\infty)}$
on a given subset of $(0,\infty)\times M\times M$ under a suitable
upper bound on the Markovian semigroup $\{T^{U}_{t}\}_{t\in(0,\infty)}$
\emph{which is assumed only on the given subset}.
In the case where the subset is the whole $(0,\infty)\times M\times M$,
similar results have been obtained, e.g., in
\cite[Sections 7 and 8]{Gri:HKfractal} and \cite[Theorem 3.1]{BBCK}.

\begin{remark}\label{rmk:Dirichlet-form}
The $\mu$-symmetry of $X$ and the regularity of its Dirichlet form are assumed mostly
for the sake of simplicity of the framework. In fact, we need these assumptions
\emph{only in order to use potential theoretic results from
\cite[\emph{Chapters \textup{2} and \textup{4}}]{FOT}
in the proof of Theorem \textup{\ref{thm:HK-existence}}};
it should be possible to extend Theorem \ref{thm:HK-existence} to a more
general framework where the same kind of potential theory remains available,
and the reader is referred to
Remarks \ref{rmk:HK-existence}, \ref{rmk:HKUB} and \ref{rmk:exit-probability}
for the precise settings actually required for the (other) results in
Sections \ref{sec:HK-existence}, \ref{sec:HKUB} and \ref{sec:exit-probability}, respectively.
\end{remark}

For $A\subset M$, let $\ind{A}:M\to\{0,1\}$ denote its indicator function
given by $\ind{A}|_{A}:=1$ and $\ind{A}|_{M\setminus A}:=0$.
In what follows we allow an interval $I\subset\mathbb{R}$ to be a one-point set.

\begin{definition}\label{dfn:upper-bound-function}
Let $I\subset(0,\infty)$ be an interval, $V$ an open subset of $M$ and
$W\in\sigmafield{B}(M)$. A Borel measurable function
$H=H_{t}(x,y):I\times V\times W\to[0,\infty]$ is called a
\emph{$\mu$-upper bound function on $I\times V\times W$} if and only if
the following three conditions are satisfied:
\begin{itemize}[label=\textup{(UB1)},align=left,leftmargin=*]
\item[\textup{(UB1)}]$\limsup_{s\downarrow t}H_{s}(x,y)\leq H_{t}(x,y)$
	for any $(t,x,y)\in I\times V\times W$ with $t<\sup I$.
\item[\textup{(UB2)}]$H_{t}(\cdot,y):V\to[0,\infty]$ is upper semi-continuous
	for any $(t,y)\in I\times W$.
\item[\textup{(UB3)}]There exist $\{h_{n}\}_{n\in\mathbb{N}}\subset\functionspace{B}^{+}(M)$
	and non-decreasing sequences $\{I_{n}\}_{n\in\mathbb{N}}$ of open subsets of $I$,
	$\{V_{n}\}_{n\in\mathbb{N}}$ of open subsets of $V$ and
	$\{W_{n}\}_{n\in\mathbb{N}}$ of Borel subsets of $W$
	with $I=\bigcup_{n\in\mathbb{N}}I_{n}$, $V=\bigcup_{n\in\mathbb{N}}V_{n}$
	and $W=\bigcup_{n\in\mathbb{N}}W_{n}$ such that for any $n\in\mathbb{N}$,
	\begin{equation}\label{eq:upper-bound-function}
		\int_{W_{n}}h_{n}\,d\mu<\infty\quad\textrm{and}\quad H_{t}(x,y)\leq h_{n}(y)
		\textrm{ for any }(t,x,y)\in I_{n}\times V_{n}\times W_{n}.
	\end{equation}
\end{itemize}
\end{definition}

\begin{remark}\label{rmk:upper-bound-function}
\begin{itemize}[label=\textup{(1)},align=left,leftmargin=*]
\item[\textup{(1)}]In \textup{(UB3)} we may assume that
	$\mu(V_{n}\cup W_{n})<\infty$ for any $n\in\mathbb{N}$, by taking a
	non-decreasing sequence $\{M_{n}\}_{n\in\mathbb{N}}$ of open subsets of $M$
	with $\overline{M_{n}}$ compact and $M=\bigcup_{n\in\mathbb{N}}M_{n}$ and replacing
	$V_{n}$ and $W_{n}$ with $V_{n}\cap M_{n}$ and $W_{n}\cap M_{n}$, respectively.
\item[\textup{(2)}]It is easy to see that the condition \textup{(UB3)} in
	Definition \ref{dfn:upper-bound-function} is satisfied if $W$ is open in $M$ and
	$\|H\|_{\mathrm{sup},K}=\sup_{(t,x,y)\in K}H_{t}(x,y)<\infty$ for any compact subset $K$ of $I\times V\times W$.
\end{itemize}
\end{remark}

\begin{theorem}\label{thm:HK-existence}
Let $I\subset(0,\infty)$ be an interval, $V$ an open subset of $M$,
$W\in\sigmafield{B}(M)$ and let $H=H_{t}(x,y)$ be a $\mu$-upper bound function
on $I\times V\times W$. Let $U$ be a non-empty open subset of $M$.
Then for each countable dense subset $J$ of $I$ satisfying $\max I\in J$
if $\max I$ exists, the following three conditions are equivalent:
\begin{itemize}[label=\textup{(1)},align=left,leftmargin=*]
\item[\textup{(1)}]For any $t\in J$ and any $v,w\in L^{2}(M,\mu)$
	with $(v\ind{V})\wedge(w\ind{W})\geq 0$ $\mu$-a.e.,
	\begin{equation}\label{eq:HK-existence1}
	\langle v\ind{V},T^{U}_{t}(w\ind{W})\rangle\leq\int_{V\times W}v(x)H_{t}(x,y)w(y)\,d(\mu\times\mu)(x,y).
	\end{equation}
\item[\textup{(2)}]For each $t\in J$ and each $w\in L^{2}(M,\mu)$
	with $w\ind{W}\geq 0$ $\mu$-a.e.,
	\begin{equation}\label{eq:HK-existence2}
	T^{U}_{t}(w\ind{W})(x)\leq\int_{W}H_{t}(x,y)w(y)\,d\mu(y)
		\quad\textrm{for }\mu\textrm{-a.e.\ }x\in V.
	\end{equation}
\item[\textup{(3)}]There exist a properly exceptional set $N\in\sigmafield{B}(M)$
	for $X$ and a Borel measurable function
	$p^{U}=p^{U}_{t}(x,y):I\times(V\setminus N)\times W\to[0,\infty]$
	such that for any $(t,x)\in I\times(V\setminus N)$,
	\begin{gather}\label{eq:HK-existence3-1}
	\functionspace{P}^{U}_{t}(x,A)=\int_{A}p^{U}_{t}(x,y)\,d\mu(y)
	\qquad\textrm{for any }A\in\sigmafield{B}(W),\\
	p^{U}_{t}(x,y)\leq H_{t}(x,y)\qquad\textrm{for any }y\in W.
	\label{eq:HK-existence3-2}
	\end{gather}
\end{itemize}
\end{theorem}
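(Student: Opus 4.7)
The implications $(3)\Rightarrow(2)\Rightarrow(1)$ are routine. For $(3)\Rightarrow(2)$, I integrate the pointwise bound $p^{U}_{t}\leq H_{t}$ against a nonnegative $w\in L^{2}$ to get $\mathcal{P}^{U}_{t}(w\ind{W})(x)\leq\int_{W}H_{t}(x,y)w(y)\,d\mu(y)$ for all $x\in V\setminus N$; since $\mu(N)=0$ and $\mathcal{P}^{U}_{t}(w\ind{W})=T^{U}_{t}(w\ind{W})$ $\mu$-a.e., this gives (2). For $(2)\Rightarrow(1)$, I multiply by $v\ind{V}\geq 0$ and apply Fubini.

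For the substantive direction $(1)\Rightarrow(3)$, I first extend (1) from $J$ to all of $I$: for $t\in I\setminus J$, pick $t_{n}\in J$ with $t_{n}\downarrow t$ (possible since $J$ is dense and contains $\max I$ when the latter exists); strong continuity of $\{T^{U}_{s}\}_{s>0}$ on $L^{2}(M,\mu)$ handles the LHS, while UB1 together with UB3 (which provides, after localizing $v,w$ to $V_{n},W_{n}$, the $L^{1}$ dominating function $h_{n}$ and is subsequently removed by monotone convergence in $n$) lets me apply reverse Fatou to the RHS. Next, for each $t\in I$, I apply Radon--Nikodym to the $\sigma$-finite measure $\lambda_{t}(A\times B):=\int_{A}\mathcal{P}^{U}_{t}(x,B)\,d\mu(x)$ on $V\times W$, which (1) bounds by $H_{t}\,d(\mu\times\mu)$, yielding a Borel density $q_{t}\leq H_{t}$ whose Fubini disintegration gives $\mathcal{P}^{U}_{t}(x,B)=\int_{B}q_{t}(x,y)\,d\mu(y)$ for $\mu$-a.e.\ $x\in V$ and all $B\in\mathcal{B}(W)$. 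To secure joint Borel measurability in $(t,x,y)$, I fix a refining sequence of countable Borel partitions $\{\mathcal{Q}_{n}\}$ of $W$ by sets of finite $\mu$-measure with $\sigma(\bigcup_{n}\mathcal{Q}_{n})=\mathcal{B}(W)$ and set
\[
p^{U}_{t}(x,y):=H_{t}(x,y)\wedge\limsup_{n\to\infty}\sum_{E\in\mathcal{Q}_{n}}\frac{\mathcal{P}^{U}_{t}(x,E)}{\mu(E)}\ind{E}(y),
\]
which is Borel in $(t,x,y)$ by \eqref{eq:Pt-measurable}; Doob's martingale convergence identifies this with $q_{t}$ at $(\mu\times\mu)$-a.e.\ $(x,y)\in V\times W$.

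The main obstacle is upgrading the kernel identity from $\mu$-a.e.\ $x$ to $\mathcal{E}$-quasi-every $x$, because $\int_{B}p^{U}_{t}(x,y)\,d\mu(y)$ is not a priori quasi-continuous in $x$. My key tool is the $\mathcal{E}$-quasi-continuity of $\mathcal{P}^{U}_{t}(\cdot,A)$ from \eqref{eq:Ptu-quasi-continuous-part} combined with the $\mu$-symmetry of $X$: for any $\mu$-null $A\in\mathcal{B}(W)$, $\langle\ind{V},\mathcal{P}^{U}_{t}\ind{A}\rangle\leq\mu(A)=0$ forces the quasi-continuous function $\mathcal{P}^{U}_{t}(\cdot,A)$ to vanish $\mu$-a.e., hence $\mathcal{E}$-q.e.\ by the q.c.\ uniqueness lemma \cite[Lemma 2.1.4]{FOT}. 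Applying this to a countable exhausting family of $\mu$-null sets (via outer regularity with open sets from a countable base of each $W_{n}$) and to each $t\in J$, I obtain an $\mathcal{E}$-polar $N_{0}\subset V$ off of which $\mathcal{P}^{U}_{t}(x,\cdot)|_{W}\ll\mu$ for every $t\in J$; right-continuity of $t\mapsto\mathcal{P}^{U}_{t}(x,A)$ (from sample-path right-continuity of $X$) propagates this absolute continuity to all $t\in I$. On $V\setminus N_{0}$, martingale convergence identifies $p^{U}_{t}(x,\cdot)$ with the Radon--Nikodym density of $\mathcal{P}^{U}_{t}(x,\cdot)|_{W}$ w.r.t.\ $\mu$; that the truncation by $H_{t}$ does not decrease it $\mu$-a.e.\ is verified by a parallel quasi-continuity upgrade of the inequality $\mathcal{P}^{U}_{t}(x,B)\leq\int_{B}H_{t}(x,y)\,d\mu(y)$ from $\mu$-a.e.\ to q.e., exploiting that the LHS is q.c.\ and the RHS is upper semicontinuous in $x$ (by UB2 and reverse Fatou via UB3), so that $\{\text{LHS}>\text{RHS}\}$ is a $\mu$-null quasi-open subset of $V$ and therefore $\mathcal{E}$-polar. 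Enlarging $N_{0}$ to a Borel properly exceptional set $N$ via \cite[Theorem 4.1.1]{FOT} completes the proof.
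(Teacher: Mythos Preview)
Your proof has the right architecture and the martingale construction of $p^{U}_{t}$ coincides with the paper's Proposition~5.3. Your device ``$\{\mathcal{P}^{U}_{t}(\cdot,B)>\int_{B}H_{t}(\cdot,y)\,d\mu(y)\}$ is a $\mu$-null quasi-open set, hence $\mathcal{E}$-polar'' is a clean variant of the paper's explicit approximation of $x$ along a nest $\{F_{k}\}$; both give the same conclusion for a \emph{fixed} pair $(t,B)$.

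There are, however, two genuine gaps in how you glue the pieces together.

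\medskip
\textbf{Gap 1: uncountably many exceptional sets.} Your absolute-continuity step (``a countable exhausting family of $\mu$-null sets via outer regularity'') does not make sense: there is no countable family of null sets that exhausts all null sets in any useful way. Likewise, the parallel upgrade in (f) produces, for each Borel $B$ and each $t$, its own polar exceptional set; a union of uncountably many polar sets need not be polar. The paper sidesteps this by fixing a countable algebra $\mathscr{A}$ generating $\mathscr{B}(M)$, constructing a \emph{single} properly exceptional $N$ from the countably many conditions indexed by $(t,A,n)\in J\times\mathscr{A}\times\mathbb{N}$, and only \emph{then}, for each fixed $x\in V\setminus N$, extending the inequality $\mathcal{P}^{U}_{t}(x,A\cap W_{n})\leq\int_{A\cap W_{n}}H_{t}(x,y)\,d\mu(y)$ from $A\in\mathscr{A}$ to all $A\in\mathscr{B}(M)$ via the monotone class theorem (both sides being finite measures on $W_{n}$ by (UB3)). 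This monotone-class step for fixed $x$ is precisely what makes the exceptional set independent of $A$; you need it and do not mention it.

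\medskip
\textbf{Gap 2: right-continuity in $t$.} Sample-path right-continuity of $X$ gives $t\mapsto\mathcal{P}^{U}_{t}u(x)$ right-continuous for \emph{continuous} $u$, not for $u=\mathbf{1}_{A}$. So your propagation from $t\in J$ to $t\in I$ is unjustified as stated. The paper obtains q.e.\ right-continuity of $t\mapsto\mathcal{P}^{U}_{t}\mathbf{1}_{A\cap W_{n}}(x)$ by writing $\mathcal{P}^{U}_{t}\mathbf{1}_{A\cap W_{n}}=\mathcal{P}^{U}_{t-\ell^{-1}}(\mathcal{P}^{U}_{\ell^{-1}}\mathbf{1}_{A\cap W_{n}})$ and applying \cite[Theorem~4.2.2]{FOT} to the quasi-continuous function $\mathcal{P}^{U}_{\ell^{-1}}\mathbf{1}_{A\cap W_{n}}$; this introduces another polar set $N_{0}$ to be absorbed into $N$. (Your alternative route through strong $L^{2}$-continuity of $\{T^{U}_{s}\}$ only extends the weak inequality (1), not the pointwise inequality needed for (3).)

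Both gaps are reparable along the lines of the paper, but as written the argument does not close.
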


We first show the following proposition, which is of independent interest and
will be used in the proof of the implication \textup{(2)}$\Rightarrow$\textup{(3)}
of Theorem \ref{thm:HK-existence} and also in the proof of
Theorems \ref{thm:HKUB-localized} and \ref{thm:HKUB-localized-global}
in the next section.

\begin{remark}\label{rmk:HK-existence}
In fact, \emph{Proposition \textup{\ref{prop:HK-existence}} below applies,
without any changes in the proof, to any locally compact separable metrizable
topological space $M$, any $\sigma$-finite Borel measure $\mu$ on $M$
and any Hunt process $X$ on $(M,\sigmafield{B}(M))$}.
\end{remark}

\begin{proposition}\label{prop:HK-existence}
Let $I\in\sigmafield{B}([0,\infty))$, let $V,W\in\sigmafield{B}(M)$ and let
$H=H_{t}(x,y):I\times V\times W\to[0,\infty]$ be Borel measurable. Let $U$ be a
non-empty open subset of $M$. Then the following two conditions are equivalent:
\begin{itemize}[label=\textup{(1)},align=left,leftmargin=*]
\item[\textup{(1)}]For any $(t,x)\in I\times V$ and any $A\in\sigmafield{B}(W)$,
	\begin{equation}\label{eq:HK-existence4}
	\functionspace{P}^{U}_{t}(x,A)\leq\int_{A}H_{t}(x,y)\,d\mu(y).
	\end{equation}
\item[\textup{(2)}]There exists a Borel measurable function
	$p^{U}=p^{U}_{t}(x,y):I\times V\times W\to[0,\infty]$ such that
	\eqref{eq:HK-existence3-1} and \eqref{eq:HK-existence3-2} hold for any $(t,x)\in I\times V$.
\end{itemize}
\end{proposition}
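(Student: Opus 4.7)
The implication $\textup{(2)}\Rightarrow\textup{(1)}$ is immediate: integrating the pointwise bound $p^{U}_{t}(x,y)\leq H_{t}(x,y)$ over $A\in\sigmafield{B}(W)$ yields \eqref{eq:HK-existence4} from \eqref{eq:HK-existence3-1}. The content of the proposition is therefore the direction $\textup{(1)}\Rightarrow\textup{(2)}$, whose substance is joint Borel measurability of a Radon-Nikodym density; the plan is a standard martingale-density construction, carefully adjusted at the end to secure the pointwise upper bound by $H$.

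First I would observe that \textup{(1)} immediately implies, for every $(t,x)\in I\times V$, that the sub-probability measure $\functionspace{P}^{U}_{t}(x,\cdot)|_{W}$ is absolutely continuous with respect to $\mu|_{W}$, since $\mu(A)=0$ forces $\int_{A}H_{t}(x,y)\,d\mu(y)=0$ and hence $\functionspace{P}^{U}_{t}(x,A)=0$. Thus a Radon-Nikodym density $f_{t,x}\in\functionspace{B}^{+}(W)$ exists pointwise in $(t,x)$, and the task is to obtain a jointly Borel version bounded pointwise by $H$. Using that $\mu$ is $\sigma$-finite on $W$ (as a Radon measure on a $\sigma$-compact space, via a countable open cover of $M$ by relatively compact sets), write $W=\bigsqcup_{k\in\mathbb{N}}W_{k}$ with $\mu(W_{k})<\infty$. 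Since $\sigmafield{B}(W)$ is countably generated, pick a refining sequence $\{\mathcal{P}_{n}\}_{n\in\mathbb{N}}$ of countable Borel partitions of $W$, each refining $\{W_{k}\}_{k\in\mathbb{N}}$, with $\sigma(\bigcup_{n}\mathcal{P}_{n})=\sigmafield{B}(W)$, and define
\begin{equation*}
q_{n}(t,x,y):=\sum_{P\in\mathcal{P}_{n},\,\mu(P)>0}\frac{\functionspace{P}^{U}_{t}(x,P)}{\mu(P)}\ind{P}(y),\qquad (t,x,y)\in I\times V\times W.
\end{equation*}
By \eqref{eq:Pt-measurable} applied to $u=\ind{P}$, each map $(t,x)\mapsto\functionspace{P}^{U}_{t}(x,P)$ is Borel, so $q_{n}$ is Borel measurable on $I\times V\times W$.

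The core step is Doob's martingale convergence theorem: for each fixed $(t,x)\in I\times V$, the restrictions $q_{n}(t,x,\cdot)|_{W_{k}}$ are conditional expectations of $f_{t,x}\ind{W_{k}}\in L^{1}(\mu|_{W_{k}})$ with respect to the finite $\sigma$-fields generated by $\mathcal{P}_{n}\cap W_{k}$, hence converge to $f_{t,x}$ $\mu|_{W_{k}}$-almost everywhere. Summing over $k$ shows $p_{0}(t,x,y):=\liminf_{n\to\infty}q_{n}(t,x,y)$ is a jointly Borel function agreeing with $f_{t,x}$ for $\mu$-a.e.\ $y\in W$, so $\int_{A}p_{0}(t,x,y)\,d\mu(y)=\functionspace{P}^{U}_{t}(x,A)$ for every $A\in\sigmafield{B}(W)$.

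Next I would upgrade the $\mu$-a.e.\ density inequality to a pointwise one. The set $E:=\{y\in W\mid p_{0}(t,x,y)>H_{t}(x,y)\}$ lies in $\sigmafield{B}(W)$; exhausting it by the Borel subsets $E\cap\{H_{t}(x,\cdot)\leq n\}\cap\{p_{0}(t,x,\cdot)\leq n\}\cap W_{k}$ of finite $\mu$-measure and comparing $\int p_{0}\,d\mu=\functionspace{P}^{U}_{t}(x,\cdot)\leq\int H_{t}(x,\cdot)\,d\mu$ on each, together with the observation that $\{p_{0}=\infty\}$ has $\mu$-measure zero since $\functionspace{P}^{U}_{t}(x,\cdot)$ is bounded, yields $\mu(E)=0$. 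Finally, setting $p^{U}_{t}(x,y):=p_{0}(t,x,y)\wedge H_{t}(x,y)$ produces a Borel measurable function which by construction satisfies \eqref{eq:HK-existence3-2} \emph{everywhere}, coincides with $p_{0}(t,x,\cdot)$ for $\mu$-a.e.\ $y$, and therefore satisfies \eqref{eq:HK-existence3-1} for every $(t,x)\in I\times V$. The main technical obstacle is joint Borel measurability of the density across all $(t,x,y)$, which is why a pointwise Radon-Nikodym selection is insufficient and the martingale approximation through countable partitions is used; everything else is bookkeeping with the $\sigma$-finite decomposition of $\mu$ on $W$.
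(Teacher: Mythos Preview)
Your proof is correct and follows essentially the same approach as the paper's: both reduce to a $\sigma$-finite decomposition of $W$, construct the density as a $\liminf$ of martingale approximations (conditional expectations along a refining sequence of finite $\sigma$-fields generating $\sigmafield{B}(W)$), invoke the martingale convergence theorem, and then take the minimum with $H$ to secure the pointwise bound \eqref{eq:HK-existence3-2}. The only cosmetic difference is that the paper builds its filtration explicitly from a countable open base of $M$, whereas you appeal abstractly to the countable generation of $\sigmafield{B}(W)$; your separate paragraph verifying $\mu(\{p_{0}>H_{t}(x,\cdot)\})=0$ is correct but unnecessary, since $p_{0}(t,x,\cdot)=f_{t,x}\leq H_{t}(x,\cdot)$ already holds $\mu$-a.e.\ directly from \textup{(1)}.
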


\begin{proof}
Since the implication \textup{(2)}$\Rightarrow$\textup{(1)} is immediate,
it suffices to show the converse \textup{(1)}$\Rightarrow$\textup{(2)}.
By the $\sigma$-finiteness of $\mu$, we can choose
$\{W_{n}\}_{n\in\mathbb{N}}\subset\sigmafield{B}(W)$ with
$W=\bigcup_{n\in\mathbb{N}}W_{n}$ so that
$W_{n}\subset W_{n+1}$ and $\mu(W_{n})<\infty$ for any $n\in\mathbb{N}$.
We will construct for each $n\in\mathbb{N}$ a function
$p^{U,n}=p^{U,n}_{t}(x,y):I\times V\times W_{n}\to[0,\infty]$
possessing the required properties with $W_{n}$ in place of $W$.
If $\mu(W_{n})=0$ then it suffices to set $p^{U,n}:=0$ in view of
\eqref{eq:HK-existence4}, and therefore we may assume $\mu(W_{n})>0$.
Let $\sigmafield{U}=\{A_{k}\}_{k\in\mathbb{N}}$ be a countable open base for
the topology of $M$, set $A_{k}^{0}:=M\setminus A_{k}$ and $A_{k}^{1}:=A_{k}$
for $k\in\mathbb{N}$, and define
\begin{equation}\label{eq:HK-existence-filtration}
\sigmafield{A}_{k}:=\{{\textstyle\bigcup_{\alpha\in\functionspace{I}}}A_{k}^{\alpha}\mid
	\functionspace{I}\subset\{0,1\}^{k}\},\quad k\in\mathbb{N},
\end{equation}
where $A_{k}^{\alpha}:={\textstyle\bigcap_{i=1}^{k}}A_{i}^{\alpha_{i}}$
for $\alpha=(\alpha_{i})_{i=1}^{k}\in\{0,1\}^{k}$, so that
$\{\sigmafield{A}_{k}\}_{k\in\mathbb{N}}$ is a non-decreasing sequence of
$\sigma$-fields in $M$ with $\bigcup_{k\in\mathbb{N}}\sigmafield{A}_{k}$
generating $\sigmafield{B}(M)$. For $k\in\mathbb{N}$, noting that
$M=\bigcup_{\alpha\in\{0,1\}^{k}}A_{k}^{\alpha}$ and that
$A_{k}^{\alpha}\cap A_{k}^{\beta}=\emptyset$ for $\alpha,\beta\in\{0,1\}^{k}$
with $\alpha\not=\beta$, define $p^{U,n,k}=p^{U,n,k}_{t}(x,y):I\times V\times M\to[0,\infty)$
by, for $\alpha\in\{0,1\}^{k}$ and $(t,x,y)\in I\times V\times A_{k}^{\alpha}$,
\begin{equation}\label{eq:HK-existence-conditional-expectation}
p^{U,n,k}_{t}(x,y):=
	\begin{cases}
		\mu(A_{k}^{\alpha}\cap W_{n})^{-1}\functionspace{P}^{U}_{t}\ind{A_{k}^{\alpha}\cap W_{n}}(x)
		&\textrm{if }\mu(A_{k}^{\alpha}\cap W_{n})>0,\\
		0&\textrm{if }\mu(A_{k}^{\alpha}\cap W_{n})=0.
	\end{cases}
\end{equation}
Then $p^{U,n,k}$ is Borel measurable by \eqref{eq:Pt-measurable}
for $\{\functionspace{P}^{U}_{t}\}_{t\in[0,\infty)}$.
Furthermore for each $(t,x)\in I\times V$, since
$\functionspace{P}^{U}_{t}(x,(\cdot)\cap W_{n})$
is absolutely continuous with respect to $\mu((\cdot)\cap W_{n})$ and
$f^{t,x}_{n}:=\frac{d\functionspace{P}^{U}_{t}(x,(\cdot)\cap W_{n})}{d\mu((\cdot)\cap W_{n})}
	\leq H_{t}(x,\cdot)$ $\mu$-a.e.\ on $W_{n}$ by \eqref{eq:HK-existence4},
$p^{U,n,k}_{t}(x,\cdot)$ is a version of the $\sigmafield{A}_{k}$-conditional
$\frac{\mu((\cdot)\cap W_{n})}{\mu(W_{n})}$-expectation of $f^{t,x}_{n}$
and hence $\lim_{k\to\infty}p^{U,n,k}_{t}(x,y)=f^{t,x}_{n}(y)\leq H_{t}(x,y)$
for $\mu$-a.e.\ $y\in W_{n}$ by the martingale convergence theorem
\cite[Theorem 10.5.1]{Dud:RAP}. Therefore the function
$p^{U,n}_{t}(x,y):=H_{t}(x,y)\wedge\liminf_{k\to\infty}p^{U,n,k}_{t}(x,y)$,
$(t,x,y)\in I\times V\times W_{n}$, has the desired properties.
Now the proof of \textup{(2)} is completed by setting $p^{U}_{t}(x,y):=p^{U,n}_{t}(x,y)$
for $n\in\mathbb{N}$ and $(t,x,y)\in I\times V\times(W_{n}\setminus W_{n-1})$
($W_{0}:=\emptyset$) and using monotone convergence.
\end{proof}

\begin{proof}[Proof of Theorem \textup{\ref{thm:HK-existence}}]
The implication \textup{(2)}$\Rightarrow$\textup{(1)} is immediate, and it is
easy to see from \textup{(UB3)} of Definition \ref{dfn:upper-bound-function}
and Remark \ref{rmk:upper-bound-function}-(1) that \textup{(1)} implies
\textup{(2)}. The implication \textup{(3)}$\Rightarrow$\textup{(2)} also
follows easily since $T^{U}_{t}u=\functionspace{P}^{U}_{t}u$ $\mu$-a.e.\ for
any $t\in(0,\infty)$ and any $u\in\functionspace{B}L^{2}(M,\mu)$.

Therefore it remains to prove \textup{(2)}$\Rightarrow$\textup{(3)}. Let
$\{h_{n}\}_{n\in\mathbb{N}},\{I_{n}\}_{n\in\mathbb{N}},
	\{V_{n}\}_{n\in\mathbb{N}},\{W_{n}\}_{n\in\mathbb{N}}$
be as in (UB3) with $\mu(W_{n})<\infty$ for any
$n\in\mathbb{N}$ as noted in Remark \ref{rmk:upper-bound-function}-(1).
Let $\sigmafield{A}_{k}$ be as in \eqref{eq:HK-existence-filtration} for each
$k\in\mathbb{N}$ and set $\sigmafield{A}:=\bigcup_{k\in\mathbb{N}}\sigmafield{A}_{k}$,
so that $\sigmafield{A}$ is countable, generates $\sigmafield{B}(M)$ and satisfies
$\emptyset\in\sigmafield{A}$, $M\setminus A\in\sigmafield{A}$ for any $A\in\sigmafield{A}$
and $A\cup B\in\sigmafield{A}$ for any $A,B\in\sigmafield{A}$.
By \eqref{eq:Ptu-quasi-continuous-part} and \cite[Theorem 2.1.2-(i)]{FOT},
there exists a non-decreasing sequence $\{F_{k}\}_{k\in\mathbb{N}}$
of closed subsets of $M$ such that $\lim_{k\to\infty}\Capa_{1}(M\setminus F_{k})=0$
and for each $k\in\mathbb{N}$,
$\mu(G\cap F_{k})>0$ for any open subset $G$ of $M$ with $G\cap F_{k}\not=\emptyset$ and
$\{\functionspace{P}^{U}_{t}\ind{A\cap W_{n}}|_{F_{k}}\mid\textrm{$n\in\mathbb{N}$, $t\in J$, $A\in\sigmafield{A}$}\}
	\subset C(F_{k})$.
Moreover, since
\begin{equation*}
\functionspace{P}^{U}_{t}\ind{A\cap W_{n}}(x)
	=\functionspace{P}^{U}_{t-l^{-1}}(\functionspace{P}^{U}_{l^{-1}}\ind{A\cap W_{n}})(x)
	=\mathbb{E}_{x}\bigl[\functionspace{P}^{U}_{l^{-1}}\ind{A\cap W_{n}}(X_{t-l^{-1}})\ind{\{t-l^{-1}<\tau_{U}\}}\bigr]
\end{equation*}
for $l\in\mathbb{N}$ and $t\in[l^{-1},\infty)$ by \eqref{eq:Pt-semigroup} for
$\{\functionspace{P}^{U}_{t}\}_{t\in[0,\infty)}$, an application of
\eqref{eq:Ptu-quasi-continuous-part} and \cite[Theorem 4.2.2]{FOT}
to $\functionspace{P}^{U}_{l^{-1}}\ind{A\cap W_{n}}$
with $l,n\in\mathbb{N}$ and $A\in\sigmafield{A}$ yields an
$\functionspace{E}$-polar set $N_{0}\in\sigmafield{B}(M)$ such that
$(0,\infty)\ni t\mapsto\functionspace{P}^{U}_{t}\ind{A\cap W_{n}}(x)\in\mathbb{R}$
is right-continuous for any $x\in M\setminus N_{0}$, any $n\in\mathbb{N}$ and
any $A\in\sigmafield{A}$. Then $(M\setminus\bigcup_{k\in\mathbb{N}}F_{k})\cup N_{0}$
is $\functionspace{E}$-polar and therefore by \cite[Theorem 4.1.1]{FOT} we can
take a properly exceptional set $N\in\sigmafield{B}(M)$ for $X$ satisfying
$(M\setminus\bigcup_{k\in\mathbb{N}}F_{k})\cup N_{0}\subset N$.

Let $n\in\mathbb{N}$ and $(t,x)\in I\times(V\setminus N)$.
We claim that for any $A\in\sigmafield{B}(M)$,
\begin{equation}\label{eq:HK-existence2-qe}
\functionspace{P}^{U}_{t}\ind{A\cap W_{n}}(x)\leq\int_{A\cap W_{n}}H_{t}(x,y)\,d\mu(y),
\end{equation}
whose limit as $n\to\infty$ results in \eqref{eq:HK-existence4} with $V\setminus N$
in place of $V$ by monotone convergence, thereby proving
\textup{(2)}$\Rightarrow$\textup{(3)} by virtue of Proposition \ref{prop:HK-existence}.
Thus it remains to show \eqref{eq:HK-existence2-qe}. To this end, let
$A\in\sigmafield{A}$ and choose $k\in\mathbb{N}$ with $k\geq n$ so that
$t\in I_{k}$ and $x\in V_{k}\cap F_{k}$.

First we assume $t\in J$. Then
$\functionspace{P}^{U}_{t}\ind{A\cap W_{n}}\leq\int_{A\cap W_{n}}H_{t}(\cdot,y)\,d\mu(y)$
$\mu$-a.e.\ on $V$ by (2), and since $\mu(G\cap V_{k}\cap F_{k})>0$
for any open subset $G$ of $M$ with $x\in G$ we can take
$\{x_{l}\}_{l\in\mathbb{N}}\subset V_{k}\cap F_{k}$ such that
$\lim_{l\to\infty}x_{l}=x$ in $M$ and
$\functionspace{P}^{U}_{t}\ind{A\cap W_{n}}(x_{l})\leq\int_{A\cap W_{n}}H_{t}(x_{l},y)\,d\mu(y)$
for any $l\in\mathbb{N}$. Now \eqref{eq:HK-existence2-qe} follows by utilizing
$\functionspace{P}^{U}_{t}\ind{A\cap W_{n}}|_{F_{k}}\in C(F_{k})$,
Fatou's lemma and \textup{(UB2)} to let $l\to\infty$, where the use of Fatou's
lemma is justified by \eqref{eq:upper-bound-function} with $k$ in place of $n$.

Next for $t\in I\setminus J$, with $k\in\mathbb{N}$ as above, we can take
a strictly decreasing sequence $\{t_{l}\}_{l\in\mathbb{N}}\subset I_{k}\cap J$
satisfying $\lim_{l\to\infty}t_{l}=t$, and then
$\functionspace{P}^{U}_{t_{l}}\ind{A\cap W_{n}}(x)\leq\int_{A\cap W_{n}}H_{t_{l}}(x,y)\,d\mu(y)$
for any $l\in\mathbb{N}$ by the previous paragraph. Now letting $l\to\infty$
yields \eqref{eq:HK-existence2-qe} for this case by the right-continuity of
$\functionspace{P}^{U}_{{\scriptscriptstyle(\cdot)}}\ind{A\cap W_{n}}(x)$,
Fatou's lemma and \textup{(UB1)}, where \eqref{eq:upper-bound-function} with
$k$ in place of $n$ is used again to verify the applicability of Fatou's lemma
to the right-hand side.

Thus \eqref{eq:HK-existence2-qe} has been proved for any $A\in\sigmafield{A}$.
Further, we easily see from \eqref{eq:upper-bound-function} with $k$
in place of $n$ and the dominated convergence theorem that
$\{A\in\sigmafield{B}(M)\mid\textrm{$A$ satisfies \eqref{eq:HK-existence2-qe}}\}$
is closed under monotone countable unions and intersections, and hence
the monotone class theorem \cite[Theorem 4.4.2]{Dud:RAP} implies that
\eqref{eq:HK-existence2-qe} holds for any $A\in\sigmafield{B}(M)$.
\end{proof}

The rest of this section is devoted to presenting examples of $\mu$-upper bound
functions. We start with a lemma which is mostly due to \cite[Subsection 3.4]{GT}.

\begin{lemma}\label{lem:upper-bound-function-Psi}
Let $\Psi:[0,\infty)\to[0,\infty)$ be a homeomorphism satisfying
\begin{equation}\label{eq:upper-bound-function-Psi}
c_{\Psi}^{-1}\Bigl(\frac{R}{r}\Bigr)^{\beta_{1}}\leq\frac{\Psi(R)}{\Psi(r)}
	\leq c_{\Psi}\Bigl(\frac{R}{r}\Bigr)^{\beta_{2}}
	\qquad\textrm{for any }r,R\in(0,\infty)\textrm{ with }r\leq R
\end{equation}
for some $c_{\Psi},\beta_{1},\beta_{2}\in(0,\infty)$ with
$1<\beta_{1}\leq\beta_{2}$, and for $(R,t)\in[0,\infty)\times(0,\infty)$ define
\begin{equation}\label{eq:upper-bound-function-Phi}
\Phi(R,t):=\Phi_{\Psi}(R,t)
	:=\sup_{r\in(0,\infty)}\Bigl\{\frac{R}{r}-\frac{t}{\Psi(r)}\Bigr\}
	=\sup_{\lambda\in(0,\infty)}\Bigl\{\frac{R}{\Psi^{-1}(\lambda^{-1})}-\lambda t\Bigr\}.
\end{equation}
Then $\Phi=\Phi_{\Psi}$ is a $[0,\infty)$-valued lower semi-continuous function
such that for any $R,t\in(0,\infty)$, $\Phi(\cdot,t)$ is non-decreasing,
$\Phi(R,\cdot)$ is non-increasing, $\Phi(0,t)=0<\Phi(R,t)$,
\begin{align}\label{eq:upper-bound-function-Phi-constant}
a\Phi(R,t)&\leq\Phi(aR,t)\quad\textrm{for any }a\in[1,\infty),\\
(c_{\Psi}2^{\beta_{1}})^{-\frac{1}{\beta_{1}-1}}\min_{k\in\{1,2\}}\Bigl(\frac{\Psi(R)}{t}\Bigr)^{\frac{1}{\beta_{k}-1}}
	&\leq\Phi(R,t)\leq c_{\Psi}^{\frac{1}{\beta_{1}-1}}\max_{k\in\{1,2\}}\Bigl(\frac{\Psi(R)}{t}\Bigr)^{\frac{1}{\beta_{k}-1}}.
\label{eq:upper-bound-function-Phi-ULE}
\end{align}
\end{lemma}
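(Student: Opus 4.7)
The plan is to treat the qualitative properties first and then establish the two-sided estimate \eqref{eq:upper-bound-function-Phi-ULE}. For each fixed $r > 0$ the map $f_r : (R,t) \mapsto R/r - t/\Psi(r)$ is continuous on $[0,\infty)\times(0,\infty)$, non-decreasing in $R$, and non-increasing in $t$; since $\Phi = \sup_{r > 0} f_r$, this immediately yields lower semi-continuity and the two monotonicities. The assumption $\beta_1 > 1$ combined with \eqref{eq:upper-bound-function-Psi} forces $\Psi(r) \to \infty$ as $r \to \infty$, so $f_r(R,t) \to 0$ and hence $\Phi(R,t) \ge 0$ everywhere, with $\Phi(0,t) = 0$ because then every $f_r \le 0$. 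The inequality $a f_r(R,t) = aR/r - a t/\Psi(r) \le aR/r - t/\Psi(r) = f_r(aR,t)$ for $a \ge 1$ gives \eqref{eq:upper-bound-function-Phi-constant} upon taking the supremum over $r$.

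The upper half of \eqref{eq:upper-bound-function-Phi-ULE} rests on the unified consequence $\Psi(r) \le c_\Psi \Psi(R) \max\bigl((r/R)^{\beta_1}, (r/R)^{\beta_2}\bigr)$ of \eqref{eq:upper-bound-function-Psi}, valid in both regimes $r \le R$ (by swapping the roles of $r$ and $R$ in the first inequality of \eqref{eq:upper-bound-function-Psi}) and $r \ge R$ (directly from the second inequality). Setting $s := R/r$ and $\alpha := t/(c_\Psi \Psi(R))$ converts this into $f_r(R,t) \le s - \alpha \min(s^{\beta_1}, s^{\beta_2})$, where the minimum equals $s^{\beta_2}$ on $(0,1]$ and $s^{\beta_1}$ on $[1,\infty)$. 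A routine one-variable calculus gives $\sup_{s \ge 0}(s - \alpha s^{\beta_k}) = ((\beta_k{-}1)/\beta_k)\,(\alpha\beta_k)^{-1/(\beta_k-1)} \le \alpha^{-1/(\beta_k-1)}$, so taking the maximum over $k \in \{1,2\}$ bounds $\Phi(R,t)$ by $\max_k c_\Psi^{1/(\beta_k-1)}(\Psi(R)/t)^{1/(\beta_k-1)}$. Because $c_\Psi \ge 1$ (set $r = R$ in \eqref{eq:upper-bound-function-Psi}) and $1/(\beta_k-1) \le 1/(\beta_1-1)$, one can enlarge the constant to the uniform $c_\Psi^{1/(\beta_1-1)}$ claimed in \eqref{eq:upper-bound-function-Phi-ULE}.

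For the lower half of \eqref{eq:upper-bound-function-Phi-ULE} and the strict positivity $\Phi(R,t) > 0$ when $R, t > 0$, the plan is to choose a near-optimizer $r^* > 0$ satisfying $\Psi(r^*)/r^* = 2t/R$; such $r^*$ exists because $\Psi(\cdot)/(\cdot)$ is continuous on $(0,\infty)$ with limits $0$ at $0^+$ and $\infty$ at $\infty$, both direct consequences of \eqref{eq:upper-bound-function-Psi} and $\beta_1 > 1$. At this $r^*$ one has $t/\Psi(r^*) = R/(2r^*)$, so $f_{r^*}(R,t) = R/(2r^*)$. The companion lower estimate $\Psi(r) \ge c_\Psi^{-1}\Psi(R)\min\bigl((r/R)^{\beta_1},(r/R)^{\beta_2}\bigr)$ applied at $r = r^*$, combined with the defining equation for $r^*$, yields $(r^*/R)^{\beta_k - 1} \le 2c_\Psi t/\Psi(R)$, where $k = 2$ in the case $r^* \le R$ and $k = 1$ in the case $r^* \ge R$. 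Inverting gives $R/(2r^*) \ge 2^{-\beta_k/(\beta_k-1)} c_\Psi^{-1/(\beta_k-1)} (\Psi(R)/t)^{1/(\beta_k-1)}$; the prefactor is minimized over $k \in \{1,2\}$ at $k = 1$ since both $\beta/(\beta-1)$ and $1/(\beta-1)$ are decreasing in $\beta$ and $c_\Psi \ge 1$, producing the claimed explicit constant $(c_\Psi 2^{\beta_1})^{-1/(\beta_1-1)}$, while $(\Psi(R)/t)^{1/(\beta_k-1)} \ge \min_j (\Psi(R)/t)^{1/(\beta_j-1)}$ trivially. The main delicate point is precisely this case split: the asymmetry of \eqref{eq:upper-bound-function-Psi} in $\beta_1,\beta_2$ forces careful bookkeeping so that the exponent emerging in each regime agrees with the $\min_k$ on the right-hand side and the constant $(c_\Psi 2^{\beta_1})^{-1/(\beta_1-1)}$ is simultaneously a valid lower bound for both prefactors.
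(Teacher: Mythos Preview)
Your proof is correct. The paper's own proof is brief: it cites \cite[Remark 3.16 and Lemma 3.19]{GT} for all assertions except the upper inequality in \eqref{eq:upper-bound-function-Phi-ULE}, and only supplies an argument for that one. Your proof is instead fully self-contained, giving explicit arguments for the qualitative properties, \eqref{eq:upper-bound-function-Phi-constant}, and both halves of \eqref{eq:upper-bound-function-Phi-ULE}.

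For the upper bound the two arguments are cousins but organized differently. The paper introduces $a:=R\Psi(r)/(rt)$, discards the harmless regime $a\le 1$, and in the regime $a>1$ bounds $R/r-t/\Psi(r)\le R/r$ and then controls $R/r$ via \eqref{eq:upper-bound-function-Psi} with $\beta=\beta_{1}$ or $\beta_{2}$ according as $r\le R$ or $r>R$. You instead pass to the variable $s=R/r$ and the parameter $\alpha=t/(c_{\Psi}\Psi(R))$, reduce to maximizing $s-\alpha s^{\beta_{k}}$ on the two half-lines, and invoke elementary calculus. Your approach is a bit more systematic and makes the source of the $\max_{k}$ transparent; the paper's approach avoids optimizing over $s$ by simply dropping the negative term $-t/\Psi(r)$ once $a>1$. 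For the lower bound, your choice of a near-optimizer $r^{*}$ with $\Psi(r^{*})/r^{*}=2t/R$ is exactly the device used in \cite{GT}, so here you have essentially reproduced the cited argument.
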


\begin{proof}
The lower semi-continuity of $\Phi=\Phi_{\Psi}$ is clear from
\eqref{eq:upper-bound-function-Phi}, and the other assertions
except the upper inequality in \eqref{eq:upper-bound-function-Phi-ULE}
have been verified in \cite[Remark 3.16 and Lemma 3.19]{GT}.
To see the upper inequality in \eqref{eq:upper-bound-function-Phi-ULE},
let $R,t,r\in(0,\infty)$ and set $a:=R\Psi(r)/(rt)$. Noting that
$R/r-t/\Psi(r)=(a-1)t/\Psi(r)\leq 0$ if $a\leq 1$, we assume $a>1$,
and set $\beta:=\beta_{1}$ if $r\leq R$ and $\beta:=\beta_{2}$ if $r>R$.
Then $at/\Psi(r)=R/r\leq(c_{\Psi}\Psi(R)/\Psi(r))^{1/\beta}$
by \eqref{eq:upper-bound-function-Psi}, hence
$\Psi(r)\geq at(c_{\Psi}^{-1}at/\Psi(R))^{\frac{1}{\beta-1}}$, and therefore
\begin{equation*}
\frac{R}{r}-\frac{t}{\Psi(r)}\leq\frac{R}{r}=\frac{at}{\Psi(r)}
	\leq\Bigl(\frac{c_{\Psi}\Psi(R)}{at}\Bigr)^{\frac{1}{\beta-1}}
	\leq c_{\Psi}^{\frac{1}{\beta_{1}-1}}\max_{k\in\{1,2\}}\Bigl(\frac{\Psi(R)}{t}\Bigr)^{\frac{1}{\beta_{k}-1}},
\end{equation*}
where the last inequality follows by $a\geq 1$, $1<\beta_{1}\leq\beta_{2}$
and the fact that $c_{\Psi}\geq 1$ by \eqref{eq:upper-bound-function-Psi}.
Now taking the supremum in $r\in(0,\infty)$ yields the desired inequality.
\end{proof}

\begin{example}\label{exmp:upper-bound-function-Psi}
An important special case of Lemma \ref{lem:upper-bound-function-Psi}
is that of $\Psi(r)=r^{\beta}$ for some $\beta\in(1,\infty)$ treated in
\cite[Example 3.17]{GT}, where $\Phi=\Phi_{\Psi}$ is easily evaluated as
\begin{equation}\label{eq:upper-bound-function-beta}
\Phi(R,t)=\beta^{-\frac{\beta}{\beta-1}}(\beta-1)\Bigl(\frac{R^{\beta}}{t}\Bigr)^{\frac{1}{\beta-1}}.
\end{equation}
\end{example}

The following lemma provides a class of typical $\mu$-upper bound functions,
which has essentially appeared in \cite[(6.10)]{GriHuLau:comp}. Note that
\emph{Lemma \textup{\ref{lem:upper-bound-function}} and Example
\textup{\ref{exmp:upper-bound-function}} below, as well as
Remark \textup{\ref{rmk:upper-bound-function}} above, apply to any locally
compact separable metric space $(M,d)$ and any Radon measure $\mu$ on $M$}
(i.e., any Borel measure on $M$ that is finite on compact sets).

\begin{lemma}\label{lem:upper-bound-function}
Let $\Psi$ and $\Phi=\Phi_{\Psi}$ be as in Lemma \textup{\ref{lem:upper-bound-function-Psi}}.
Let $I\subset(0,\infty)$ be an interval, let $V,W$ be open subsets of $M$ and
let $F=F_{t}(x,y):I\times V\times W\to(0,\infty)$ be a Borel measurable function
satisfying \textup{(UB1)} and \textup{(UB2)} of Definition \textup{\ref{dfn:upper-bound-function}}
and the following \emph{$\Psi$-doubling condition \textup{(DB)$_{\Psi}$}}:
\begin{itemize}[label=\textup{(DB)$_{\Psi}$},align=left,leftmargin=*]
\item[\textup{(DB)$_{\Psi}$}]There exist $\alpha_{F},c_{F}\in(0,\infty)$ such that
	for any $(t,x,y),(s,z,w)\in I\times V\times W$ with $s\leq t$,
	\begin{equation}\label{eq:upper-bound-function-F-DBPsi}
		\frac{F_{s}(z,w)}{F_{t}(x,y)}
		\leq c_{F}\Bigl(\frac{t\vee\Psi(d(x,z))\vee\Psi(d(y,w))}{s}\Bigr)^{\alpha_{F}}.
	\end{equation}
\end{itemize}
Also let $c_{1},c_{2}\in(0,\infty)$ and define
$H=H_{t}(x,y):I\times V\times W\to(0,\infty)$ by
\begin{equation}\label{eq:upper-bound-function-H}
H_{t}(x,y):=F_{t}(x,y)\exp\bigl(-c_{1}\Phi(c_{2}d(x,y),t)\bigr).
\end{equation}
Then $F=F_{t}(x,y)$ and $H=H_{t}(x,y)$ are $\mu$-upper bound functions on $I\times V\times W$.
\end{lemma}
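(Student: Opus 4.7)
The plan is to check the three conditions (UB1), (UB2), (UB3) of Definition \ref{dfn:upper-bound-function} for both $F$ and $H$. Since (UB1) and (UB2) for $F$ are part of the hypotheses, only (UB3) needs to be verified for $F$. Because $\Phi\geq 0$ by Lemma \ref{lem:upper-bound-function-Psi}, we have $0\leq H\leq F$ pointwise, so (UB3) for $H$ follows from (UB3) for $F$ with the same choice of $\{h_n\},\{I_n\},\{V_n\},\{W_n\}$; thus for $H$ only (UB1) and (UB2) remain.

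To prove (UB3) for $F$, I would fix reference points $x_*\in V$ and $y_*\in W$. Using that $M$ is locally compact separable and $\mu$ is Radon, one can choose non-decreasing exhaustions $V_n\uparrow V$ of relatively compact open subsets of $V$ containing $x_*$, Borel $W_n\uparrow W$ with $\mu(W_n)<\infty$ and bounded diameter, and open $I_n\uparrow I$ with $\inf I_n>0$ such that some $t_n\in I$ satisfies $\sup I_n\leq t_n$ (taking $t_n$ near $\sup I$ when $\sup I<\infty$). Applying \textup{(DB)$_{\Psi}$} with reference triple $(t_n,x_*,y_*)$ and variable $(s,z,w)=(t,x,y)$ (so the hypothesis $s\leq t$ becomes $t\leq t_n$, which holds on $I_n$) yields, for every $(t,x,y)\in I_n\times V_n\times W_n$,
\[
F_t(x,y)\leq c_F\,F_{t_n}(x_*,y_*)\biggl(\frac{t_n\vee\Psi\bigl(\sup_{x\in V_n}d(x_*,x)\bigr)\vee\Psi\bigl(\sup_{y\in W_n}d(y_*,y)\bigr)}{\inf I_n}\biggr)^{\alpha_F}=:C_n<\infty.
\]
Setting $h_n:=C_n\,\ind{W_n}$ then gives $\int_{W_n}h_n\,d\mu=C_n\mu(W_n)<\infty$ and the pointwise bound required by (UB3).

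For (UB2) of $H$, Lemma \ref{lem:upper-bound-function-Psi} states that $\Phi(\cdot,t)$ is lower semi-continuous; composing with the continuous map $x\mapsto c_2 d(x,y)$ preserves lower semi-continuity, so $x\mapsto\exp(-c_1\Phi(c_2d(x,y),t))$ is upper semi-continuous and $[0,1]$-valued. Multiplying by the non-negative upper semi-continuous $F_t(\cdot,y)$ preserves upper semi-continuity. For (UB1) of $H$, Lemma \ref{lem:upper-bound-function-Psi} asserts that $\Phi(R,\cdot)$ is non-increasing; combined with its lower semi-continuity in $t$ (inherited from the joint lower semi-continuity of $\Phi$), the sandwich
\[
\Phi(R,t)\leq\liminf_{s\downarrow t}\Phi(R,s)\leq\limsup_{s\downarrow t}\Phi(R,s)\leq\Phi(R,t)
\]
establishes right-continuity of $\Phi(R,\cdot)$, hence $\lim_{s\downarrow t}\exp(-c_1\Phi(c_2d(x,y),s))=\exp(-c_1\Phi(c_2d(x,y),t))$. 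Combining with (UB1) for $F$ yields $\limsup_{s\downarrow t}H_s(x,y)\leq H_t(x,y)$.

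The only slightly delicate point is arranging the exhaustions in (UB3) to be compatible with the asymmetric condition $s\leq t$ in \textup{(DB)$_{\Psi}$}: for each $n$ a reference time $t_n\in I$ must sit above $\sup I_n$, requiring some care at the endpoints of $I$. This is routine, and I do not anticipate any essential obstacle beyond it.
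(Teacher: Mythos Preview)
Your argument is correct and follows essentially the same route as the paper. The paper verifies (UB1) and (UB2) for $H$ from those of $F$ and the lower semi-continuity of $\Phi$ (you spell out the right-continuity of $\Phi(R,\cdot)$ and the product-of-USC step that the paper leaves as ``immediate''), and for (UB3) the paper observes that \textup{(DB)$_{\Psi}$} makes $F$ bounded on each compact subset of $I\times V\times W$ and then invokes Remark~\ref{rmk:upper-bound-function}-(2); your explicit construction of $I_n,V_n,W_n,h_n$ is exactly the content of that remark unpacked by hand.
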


\begin{proof}
It is immediate to see that $H=H_{t}(x,y)$ is Borel measurable and satisfies
\textup{(UB1)} and \textup{(UB2)}, from the corresponding properties of $F=F_{t}(x,y)$
and the lower semi-continuity of $\Phi$. Also \textup{(DB)$_{\Psi}$} easily implies
that $F=F_{t}(x,y)$ and hence $H=H_{t}(x,y)$ are bounded on each compact subset of
$I\times V\times W$, so that they satisfy \textup{(UB3)}
by Remark \ref{rmk:upper-bound-function}-(2).
\end{proof}

\begin{example}\label{exmp:upper-bound-function}
Let $\Psi$ be as in Lemma \textup{\ref{lem:upper-bound-function-Psi}}.
\begin{itemize}[label=\textup{(1)},align=left,leftmargin=*]
\item[\textup{(1)}]A continuous function
	$F=F_{t}(x,y):(0,\infty)\times M\times M\to(0,\infty)$ of the form
	\begin{equation}\label{eq:upper-bound-function-F-ex1}
	F_{t}(x,y)=c_{3}t^{-\alpha_{1}}\bigl(\log(2+t^{-1})\bigr)^{\alpha_{2}}\bigl(\log(2+t)\bigr)^{\alpha_{3}}
	\end{equation}
	for some $c_{3},\alpha_{1}\in(0,\infty)$ and $\alpha_{2},\alpha_{3}\in\mathbb{R}$ clearly
	satisfies \textup{(UB1)}, \textup{(UB2)} and \textup{(DB)$_{\Psi}$}.
\item[\textup{(2)}]
	Let $R\in(0,\infty]$, let $V,W$ be open subsets of $M$ with
	$(\diam V)\vee(\diam W)\leq R$ and let $\nu$ be a Borel measure on $M$
	satisfying the \emph{volume doubling property}
	\begin{equation}\label{eq:VD}
	0<\nu(B(x,2r))\leq c_{\mathrm{vd}}\nu(B(x,r))<\infty
	\end{equation}
	for any $(x,r)\in(V\cup W)\times(0,R)$ for some $c_{\mathrm{vd}}\in(0,\infty)$.
	Then for each $c_{4}\in(0,\infty)$, the function
	$F=F_{t}(x,y):(0,\Psi(R)]\times V\times W\to(0,\infty)$
	($(0,\infty)$ in place of $(0,\Psi(R)]$ for $R=\infty$) defined by
	\begin{equation}\label{eq:upper-bound-function-F-vol}
	F_{t}(x,y):=c_{4}\nu\bigl(B(x,\Psi^{-1}(t))\bigr)^{-1/2}\nu\bigl(B(y,\Psi^{-1}(t))\bigr)^{-1/2}
	\end{equation}
	is easily proved to be upper semi-continuous and satisfy \textup{(DB)$_{\Psi}$}
	thanks to \eqref{eq:VD} and \eqref{eq:upper-bound-function-Psi}, and in particular
	it is Borel measurable and satisfies \textup{(UB1)} and \textup{(UB2)}.
\end{itemize}
\end{example}

\section{Localized upper bounds of heat kernels for diffusions}\label{sec:HKUB}

In this section, we state and prove the main theorem of this paper on deducing
heat kernel upper bounds for $\{\functionspace{P}_{t}\}_{t\in(0,\infty)}$
from those for $\{\functionspace{P}^{U}_{t}\}_{t\in(0,\infty)}$
(Theorem \ref{thm:HKUB-localized} below).
The arguments heavily rely on the decay estimate \eqref{eq:exit-probability}
for the exit probabilities $\mathbb{P}_{x}[\tau_{B(x,r)}\leq t]$, for which
reasonable sufficient conditions will be presented in the next section.
\emph{In the rest of this paper, we fix a homeomorphism
$\Psi:[0,\infty)\to[0,\infty)$ and $c_{\Psi},\beta_{1},\beta_{2}\in(0,\infty)$
with $1<\beta_{1}\leq\beta_{2}$ satisfying \eqref{eq:upper-bound-function-Psi},
and $\Phi=\Phi_{\Psi}$ denotes the function given by \eqref{eq:upper-bound-function-Phi}.}

\emph{Throughout this section, we fix an arbitrary properly exceptional set
$N\in\sigmafield{B}(M)$ for $X$ such that for any
$x\in M\setminus N$,
\begin{equation}\label{eq:X-continuous}
\mathbb{P}_{x}\bigl[[0,\zeta)\ni t\mapsto X_{t}\in M\textrm{ is continuous}\bigr]=1,
\end{equation}}%
where
$\{\textrm{$[0,\zeta)\ni t\mapsto X_{t}\in M$ is continuous}\}\in\sigmafield{F}_{\infty}$
by \cite[Chapter III, 13 and 33]{DM}. According to \cite[Theorem 4.5.1]{FOT},
such $N$ exists if and only if
$(\functionspace{E},\functionspace{F})$ is \emph{local},
i.e., $\functionspace{E}(u,v)=0$ for any $u,v\in\functionspace{F}$ with
$\supp_{\mu}[u],\supp_{\mu}[v]$ compact and $\supp_{\mu}[u]\cap\supp_{\mu}[v]=\emptyset$.
Here for $u\in\functionspace{B}(M)$ or its $\mu$-equivalence class,
$\supp_{\mu}[u]$ denotes its \emph{$\mu$-support} defined as
the smallest closed subset of $M$ such that
$u=0$ $\mu$-a.e.\ on $M\setminus\supp_{\mu}[u]$, which exists since
$M$ has a countable open base for its topology. Note that
$\supp_{\mu}[u]=\overline{u^{-1}(\mathbb{R}\setminus\{0\})}$ for $u\in C(M)$.

\begin{remark}\label{rmk:HKUB}
In fact, \emph{Theorems \textup{\ref{thm:HKUB-localized}},
\textup{\ref{thm:HKUB-localized-global}},
Propositions \textup{\ref{prop:HKUB-localized}} and
\textup{\ref{prop:HKUB-localized-difference}} below apply, without
any changes in the proofs, to any locally compact separable metric space $(M,d)$,
any $\sigma$-finite Borel measure $\mu$ on $M$, any Hunt process $X$ on
$(M,\sigmafield{B}(M))$ and any $N\in\sigmafield{B}(M)$ satisfying
\eqref{eq:properly-exceptional} and \eqref{eq:X-continuous}
for any $x\in M\setminus N$}.
\end{remark}

\begin{theorem}\label{thm:HKUB-localized}
Let $R\in(0,\infty)$, let $U$ be a non-empty open subset of $M$ with $\diam U\leq R$
and let $F=F_{t}(x,y):(0,\Psi(R)]\times U\times U\to(0,\infty)$ be a Borel measurable
function satisfying \textup{(DB)$_{\Psi}$} of Lemma \textup{\ref{lem:upper-bound-function}}
with $I=(0,\Psi(R)]$ and $V=W=U$.
Let $c,\gamma\in(0,\infty)$ and assume that the following two conditions
\textup{(DU)$_{F}^{U,R}$} and \textup{(P)$_{\Psi}^{U,R}$} are fulfilled:
\begin{itemize}[label=\textup{(DU)$_{F}^{U,R}$},align=left,leftmargin=*]
\item[\textup{(DU)$_{F}^{U,R}$}]
	For any $(t,x)\in(0,\Psi(R))\times(U\setminus N)$ and any $A\in\sigmafield{B}(U)$,
	\begin{equation}\label{eq:HKUB-localized-on-diag}
	\functionspace{P}^{U}_{t}(x,A)\leq\int_{A}F_{t}(x,y)\,d\mu(y).
	\end{equation}
\item[\textup{(P)$_{\Psi}^{U,R}$}]
	For any $(x,r)\in(U\setminus N)\times(0,R)$ with $B(x,r)\subset U$
	and any $t\in(0,\infty)$,
	\begin{equation}\label{eq:exit-probability}
	\mathbb{P}_{x}[\tau_{B(x,r)}\leq t]\leq c\exp(-\Phi(\gamma r,t)).
	\end{equation}
\end{itemize}
Let $\varepsilon\in(0,1)$ and set
$U^{\circ}_{\varepsilon R}:=\{x\in M\mid\inf_{y\in M\setminus U}d(x,y)>\varepsilon R\}$
\textup{(note that $U^{\circ}_{\varepsilon R}$ is an open subset of $U$)}.
Then there exists a Borel measurable function
$p=p_{t}(x,y):(0,\infty)\times(M\setminus N)\times U^{\circ}_{\varepsilon R}\to[0,\infty)$
such that for any $(t,x)\in(0,\infty)\times(M\setminus N)$ the following hold:
\begin{equation}\label{eq:HKUB-localized-existence}
\functionspace{P}_{t}(x,A)=\int_{A}p_{t}(x,y)\,d\mu(y)
	\qquad\textrm{for any }A\in\sigmafield{B}(U^{\circ}_{\varepsilon R}),
\end{equation}
and furthermore for any $y\in U^{\circ}_{\varepsilon R}$,
\begin{equation}\label{eq:HKUB-localized-off-diag-U}
p_{t}(x,y)\leq
	\begin{cases}
	c_{\varepsilon}F_{t}(x,y)\exp\bigl(-\Phi(\gamma_{\varepsilon}d(x,y),t)\bigr) &\textrm{if }t<\Psi(R)\textrm{ and }x\in U,\\
	c_{\varepsilon}(\inf_{U\times U}F_{(2t)\wedge\Psi(R)})\exp(-\Phi(\gamma_{\varepsilon}R,t)) &\textrm{if }t<\Psi(R)\textrm{ and }x\not\in U,\\
	c_{\varepsilon}(\inf_{U\times U}F_{\Psi(R)}) &\textrm{if }t\geq\Psi(R)
	\end{cases}
\end{equation}
for some $c_{\varepsilon}\in(0,\infty)$ explicit in
$c_{\Psi},\beta_{1},\beta_{2},c_{F},\alpha_{F},c,\gamma,\varepsilon$ and
$\gamma_{\varepsilon}:=\frac{1}{5}\varepsilon\gamma$.
\end{theorem}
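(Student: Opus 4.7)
The proof naturally splits into three stages corresponding to the two probabilistic iteration arguments announced in the introduction and a final application of Proposition~\ref{prop:HK-existence}.

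\textbf{Stage 1} (off-diagonal bound for $p^{U}$, to be captured as Proposition~\ref{prop:HKUB-localized}). The plan is to upgrade the on-diagonal bound (DU)$_{F}^{U,R}$ to an off-diagonal estimate
\begin{equation*}
\functionspace{P}^{U}_{t}(x,A)\leq\int_{A}c_{1}F_{t}(x,y)\exp\bigl(-\Phi(\gamma_{1}d(x,y),t)\bigr)\,d\mu(y),\qquad x\in U\setminus N,\ A\in\sigmafield{B}(U),
\end{equation*}
by an iteration entirely inside $U$. Given $x\in U\setminus N$ and a set $A$ separated from $x$ by distance $r$, decompose $\{X_{t}\in A,\ t<\tau_{U}\}$ according to whether $\tau_{B(x,r/3)}>t/2$: on that event $X_{t}\in B(x,r/3)$ is disjoint from $A$ and contributes nothing, while on its complement apply the strong Markov property at $\tau_{B(x,r/3)}$ to substitute the new starting point $X_{\tau_{B(x,r/3)}}$ and the remaining time $t-\tau_{B(x,r/3)}$, then plug in (DU)$_{F}^{U,R}$. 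Iterating this decomposition produces a series whose convergence is ensured by (P)$_{\Psi}^{U,R}$, with (DB)$_{\Psi}$ controlling the ratios $F_{t-s}(\cdot,\cdot)/F_{t}(x,y)$ that appear along the way. Note this stage uses only the behaviour of $X$ inside $U$ and nowhere invokes symmetry.

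\textbf{Stage 2} (from $p^{U}$ to $p$ via multiple Dynkin--Hunt, to be captured as Proposition~\ref{prop:HKUB-localized-difference}). Fix $x\in M\setminus N$ and $A\in\sigmafield{B}(U^{\circ}_{\varepsilon R})$, and apply Theorem~\ref{thm:multiple-DH} with the buffer set $B:=U^{\circ}_{\varepsilon R/2}$, which is an open set containing $U^{\circ}_{\varepsilon R}$ and satisfying $\overline{B}\subset U$:
\begin{equation*}
\functionspace{P}_{t}(x,A)=\functionspace{P}^{U}_{t}(x,A)+\sum_{n\in\mathbb{N}}\mathbb{E}_{x}\bigl[\ind{\{\sigma_{n}\leq t\}}\functionspace{P}^{U}_{t-\sigma_{n}}(X_{\sigma_{n}},A)\bigr].
\end{equation*}
Bound each summand by Stage~1, noting that $X_{\sigma_{n}}\in\overline{B}$ and that for $y\in A\subset U^{\circ}_{\varepsilon R}$ the triangle inequality guarantees $d(X_{\sigma_{n}},y)\geq\varepsilon R/2$. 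To bound $\mathbb{P}_{x}[\sigma_{n}\leq t]$, iterate (P)$_{\Psi}^{U,R}$ via the strong Markov property at each $\sigma_{k}$: between $\sigma_{k}$ and $\tau_{k+1}$ the process restarts in $\overline{B}$ and must exit the ball $B(X_{\sigma_{k}},\varepsilon R/2)\subset U$, so each excursion contributes a factor of the form $c\exp(-\Phi(\gamma\varepsilon R/2,t_{k}))$ with $\sum_{k}t_{k}\leq t$. Summing over $n$, optimising the split of $t$ among the excursions, and absorbing the growth of $F_{t-\sigma_{n}}$ into a constant multiple of $F_{t}$ using (DB)$_{\Psi}$, one obtains the right-hand side of \eqref{eq:HKUB-localized-off-diag-U}. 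The case analysis in \eqref{eq:HKUB-localized-off-diag-U} arises naturally: for $t<\Psi(R)$ and $x\in U$ both terms appear and one gets $\Phi(\gamma_{\varepsilon}d(x,y),t)$; for $x\notin U$ one has $\tau_{U}=0$ so only the excursion sum contributes, and the first hop from $x$ to $B$ costs at least $\Phi(\gamma_{\varepsilon}R,t)$; for $t\geq\Psi(R)$ the exponential factor saturates to a constant and (DB)$_{\Psi}$ reduces $F_{t}$ to a multiple of $\inf_{U\times U}F_{\Psi(R)}$.

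\textbf{Stage 3} (Borel density). The pointwise upper bound on $\functionspace{P}_{t}(x,A)$ obtained in Stage~2 is exactly condition~(1) of Proposition~\ref{prop:HK-existence}, applied with $I=(0,\infty)$, $V=M\setminus N$, $W=U^{\circ}_{\varepsilon R}$, the ambient open set taken as $M$ itself (so that $\functionspace{P}^{M}=\functionspace{P}$), and $H$ equal to the right-hand side of \eqref{eq:HKUB-localized-off-diag-U}. That proposition then yields the desired Borel measurable density $p=p_{t}(x,y)$.

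The main obstacle is the convergence analysis in Stage~2: one must verify that the exponential decay in $n$ of $\mathbb{P}_{x}[\sigma_{n}\leq t]$ dominates both the potential growth of $F_{t-\sigma_{n}}(X_{\sigma_{n}},y)$ as $\sigma_{n}\uparrow t$ and the additional off-diagonal exponential $\exp(-\Phi(\gamma_{1}d(X_{\sigma_{n}},y),t-\sigma_{n}))$, and that the final exponent can be rearranged into $\Phi(\gamma_{\varepsilon}d(x,y),t)$. The loss from $\gamma$ to $\gamma_{\varepsilon}=\varepsilon\gamma/5$ reflects the several triangle-inequality splittings (distance $\varepsilon R/2$ from $\overline{B}$ to $\partial U$; distance $\varepsilon R/2$ from $X_{\sigma_{n}}$ to $y$; and the splitting of the total time $t$ across consecutive excursions), each of which forces a further fractional reduction of the sharp exponent provided by (P)$_{\Psi}^{U,R}$.
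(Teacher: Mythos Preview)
Your three-stage architecture matches the paper exactly, including the choice of buffer $B=U^{\circ}_{(\varepsilon/2)R}$ in Stage~2 and the appeal to Proposition~\ref{prop:HK-existence} in Stage~3. The gap is in Stage~1.

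The decomposition you propose there does not work as written. On the event $\{\tau_{B(x,r/3)}>t/2\}$ it is \emph{not} true that $X_{t}\in B(x,r/3)$; that conclusion would require $\tau_{B(x,r/3)}>t$, but then on the complement $\{\tau_{B(x,r/3)}\leq t\}$ the residual time $t-\tau_{B(x,r/3)}$ can be arbitrarily small, so that $F_{t-\tau}(X_{\tau},y)$ blows up and cannot be absorbed by the single exit-probability factor. More fundamentally, any iteration built on exit times from balls centred at $x$ (or at subsequent restart points $z_{k}\in\partial B(x,kr/n)$) runs into the localisation obstacle that (P)$_{\Psi}^{U,R}$ only applies when the ball in question lies in $U$: since $x$ may sit arbitrarily close to $\partial U$, there is no guarantee that $B(z_{k},\rho)\subset U$ for the radii $\rho$ you would need.

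The paper's Proposition~\ref{prop:HKUB-localized} resolves this by iterating around the \emph{target} instead. Fixing $y_{0}\in U^{\circ}_{\varepsilon R}$ (so that $B(y_{0},\varepsilon R)\subset U$ automatically), one introduces a shrinking family $r_{n}=r+2^{-n/(2\beta_{2})}r$ and the \emph{entrance} times $\sigma_{n}:=\dot{\sigma}_{B(y_{0},r_{n})}$. The key observation is that if $X_{t}\in B(y_{0},r)$ then some $n$ satisfies $\sigma_{n+1}\leq\frac{1}{2}(\sigma_{n}+t)$; one then applies strong Markov twice, at $\sigma_{n}$ and at $\sigma_{n+1}$, which simultaneously guarantees a residual time $\geq\frac{1}{2}(t-\sigma_{n})$ (so (DU)$_{F}^{U,R}$ and (DB)$_{\Psi}$ control the $F$-ratio) and a restart point on $\partial B(y_{0},r_{n})$ (so (P)$_{\Psi}^{U,R}$ is applicable to the crossing from $\partial B(y_{0},r_{n})$ to $\partial B(y_{0},r_{n+1})$, the relevant balls all lying inside $B(y_{0},4r)\subset U$). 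Your Stage~1 would need to be replaced by this argument.

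Two smaller points. In Stage~2 the paper bounds $\mathbb{P}_{x}[\sigma_{n}\leq t]$ not by a product of excursion factors but by a pigeonhole: if $\sigma_{n}\leq t$ then some $\tau_{U}\circ\theta_{\sigma_{k}}\leq t/(n-1)$, and a single application of (P)$_{\Psi}^{U,R}$ at $X_{\sigma_{k}}\in\partial B$ yields $\exp\bigl(-\Phi(\frac{1}{2}\varepsilon\gamma R,\,t/(n-1))\bigr)$. For $t\geq\Psi(R)$ your ``exponential factor saturates'' is not quite the mechanism, since $F_{t}$ is not even defined there; the paper instead uses the semigroup identity $\functionspace{P}_{t}=\functionspace{P}_{t-\Psi(R)/2}\functionspace{P}_{\Psi(R)/2}$ to reduce to the already-established case $t=\Psi(R)/2$.
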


In light of the equivalence stated in Proposition \ref{prop:HK-existence} and
the examples of Borel measurable functions $F=F_{t}(x,y)$ satisfying
\textup{(DB)$_{\Psi}$} in Example \ref{exmp:upper-bound-function},
\textup{(DU)$_{F}^{U,R}$} of Theorem \ref{thm:HKUB-localized} amounts to an
\emph{on-diagonal} upper bound of the heat kernel $p^{U}=p^{U}_{t}(x,y)$ for
$\{\functionspace{P}^{U}_{t}\}_{t\in(0,\infty)}$.
Note that \emph{the two conditions \textup{(DU)$_{F}^{U,R}$} and
\textup{(P)$_{\Psi}^{U,R}$} involve only the part $X^{U}$ of $X$ on $U$
and hence are independent of the behavior of $X$ after exiting $U$},
on account of \eqref{eq:transition-function-part} and the obvious fact that
$\tau_{B}(\omega)=\inf\{t\in[0,\infty)\mid X^{U}_{t}(\omega)\in U_{\cemetery}\setminus B\}$
for $B\subset U$ and $\omega\in\Omega$.

\begin{remark}\label{rmk:HKUB-localized}
Theorem \ref{thm:HK-existence} tells us that \textup{(DU)$_{F}^{U,R}$}
of Theorem \ref{thm:HKUB-localized} is implied,
at the price of replacing $N$ with a larger properly exceptional set for $X$,
by its \emph{``$\mu$-a.e."} counterpart for the Markovian semigroup
$\{T^{U}_{t}\}_{t\in(0,\infty)}$ provided $F=F_{t}(x,y)$ is a
$\mu$-upper bound function on $(0,\Psi(R))\times U\times U$.
Remember, though, that \emph{we have proved Theorem \textup{\ref{thm:HK-existence}}
only for a Radon measure $\mu$ on $M$ with full support and a $\mu$-symmetric
Hunt process $X$ on $(M,\sigmafield{B}(M))$ whose Dirichlet form
$(\functionspace{E},\functionspace{F})$ is regular on $L^{2}(M,\mu)$};
recall Remark \ref{rmk:Dirichlet-form} in this connection.
\end{remark}

We also have a global version of Theorem \ref{thm:HKUB-localized}
for the case where its assumptions are valid on $B(y_{0},\frac{R'}{2})$ for any
$(y_{0},R')\in M\times(0,\infty)$ with $R'\leq R$, as follows. Set $\Psi(\infty):=\infty$.

\begin{theorem}\label{thm:HKUB-localized-global}
Let $\delta\in(0,1]$, let $R\in(0,\infty]$ satisfy $R\geq\delta\diam M$
and let $F=F_{t}(x,y):(0,\Psi(R)]\times M\times M\to(0,\infty)$ be a Borel
measurable function satisfying \textup{(DB)$_{\Psi}$} of
Lemma \textup{\ref{lem:upper-bound-function}} with $I=(0,\Psi(R)]$ and $V=W=M$
\textup{($(0,\infty)$ in place of $(0,\Psi(R)]$ for $R=\infty$)}.
Let $c,\gamma\in(0,\infty)$ and assume that the two conditions
\textup{(DU)$_{F}^{B(y_{0},R'/2),R'}$} and \textup{(P)$_{\Psi}^{B(y_{0},R'/2),R'}$}
from Theorem \textup{\ref{thm:HKUB-localized}} are fulfilled for any
$(y_{0},R')\in M\times(0,\infty)$ with $R'\leq R$.
Then there exists a Borel measurable function
$p=p_{t}(x,y):(0,\infty)\times(M\setminus N)\times M\to[0,\infty)$
such that for any $(t,x)\in(0,\infty)\times(M\setminus N)$,
\eqref{eq:HKUB-localized-existence} with $\sigmafield{B}(M)$ in place of
$\sigmafield{B}(U^{\circ}_{\varepsilon R})$ holds and
\begin{equation}\label{eq:HKUB-localized-off-diag}
p_{t}(x,y)\leq
	\begin{cases}
	c'\delta^{-\beta_{2}\alpha_{F}}F_{t}(x,y)\exp\bigl(-\Phi(\gamma'_{\delta}d(x,y),t)\bigr) &\textrm{if }t<\Psi(R),\\
	c'\delta^{-\beta_{2}\alpha_{F}}(\inf_{M\times M}F_{\Psi(R)}) &\textrm{if }t\geq\Psi(R)
	\end{cases}
\end{equation}
for any $y\in M$ for some $c'\in(0,\infty)$ explicit in
$c_{\Psi},\beta_{1},\beta_{2},c_{F},\alpha_{F},c,\gamma$
and $\gamma'_{\delta}:=\frac{1}{40}\delta\gamma$.
\end{theorem}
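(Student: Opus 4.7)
The strategy is to reduce Theorem~\ref{thm:HKUB-localized-global} to Theorem~\ref{thm:HKUB-localized} by covering $M$ with a countable family of balls of diameter at most $R$, applying the localized theorem to each to obtain local heat kernels, patching these into a single global function, and converting the local bounds into the desired global one via \textup{(DB)$_{\Psi}$} combined with the bound $\diam M\leq R/\delta$ implied by $R\geq\delta\diam M$.

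First I treat the case $R<\infty$, so that $\diam M\leq R/\delta<\infty$. Fix $\varepsilon:=1/8$. By separability of $(M,d)$, I choose a countable family $\{y_{i}\}_{i\in\mathbb{N}}\subset M$ such that $M=\bigcup_{i\in\mathbb{N}}V_{i}$, where $V_{i}:=B(y_{i},R/4)$, and set $U_{i}:=B(y_{i},R/2)$. Then $\diam U_{i}\leq R$ and $V_{i}\subset(U_{i})^{\circ}_{\varepsilon R}$, since for any $z\in V_{i}$ and $w\in M\setminus U_{i}$ we have $d(z,w)\geq R/2-d(z,y_{i})>R/4>\varepsilon R$. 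By hypothesis \textup{(DU)$_{F}^{U_{i},R}$} and \textup{(P)$_{\Psi}^{U_{i},R}$} hold, so Theorem~\ref{thm:HKUB-localized} supplies a Borel measurable function $p^{(i)}:(0,\infty)\times(M\setminus N)\times(U_{i})^{\circ}_{\varepsilon R}\to[0,\infty)$ satisfying \eqref{eq:HKUB-localized-existence} and \eqref{eq:HKUB-localized-off-diag-U}. I patch these by setting $p_{t}(x,y):=p^{(i(y))}_{t}(x,y)$, where $i(y):=\min\{j\in\mathbb{N}\mid y\in V_{j}\}$; decomposing any $A\in\sigmafield{B}(M)$ as the disjoint union $\bigsqcup_{i\in\mathbb{N}}A_{i}$ with $A_{i}:=A\cap V_{i}\setminus\bigcup_{j<i}V_{j}\subset(U_{i})^{\circ}_{\varepsilon R}$ and summing the kernel identities for the $p^{(i)}$ yields \eqref{eq:HKUB-localized-existence} on all of $\sigmafield{B}(M)$.

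For the bound \eqref{eq:HKUB-localized-off-diag}, I apply \eqref{eq:HKUB-localized-off-diag-U} with $i=i(y)$. With $\varepsilon=1/8$ one has $\gamma_{\varepsilon}=\gamma/40\geq\gamma\delta/40=\gamma'_{\delta}$ (using $\delta\leq 1$), while $\diam M\leq R/\delta$ forces $\gamma_{\varepsilon}R\geq\gamma_{\varepsilon}\delta d(x,y)=\gamma'_{\delta}d(x,y)$; hence the exponential factor in every sub-case of \eqref{eq:HKUB-localized-off-diag-U} is dominated by $\exp(-\Phi(\gamma'_{\delta}d(x,y),t))$. The multiplicative factor $\delta^{-\beta_{2}\alpha_{F}}$ emerges from \textup{(DB)$_{\Psi}$} combined with $\Psi(d(z,w))\leq\Psi(R/\delta)\leq c_{\Psi}\delta^{-\beta_{2}}\Psi(R)$ for $z,w\in M$: e.g.\ in the case $t\geq\Psi(R)$, \textup{(DB)$_{\Psi}$} at time $\Psi(R)$ yields, for every $(z^{*},w^{*})\in M\times M$, $F_{\Psi(R)}(y_{i},y_{i})\leq c_{F}c_{\Psi}^{\alpha_{F}}\delta^{-\beta_{2}\alpha_{F}}F_{\Psi(R)}(z^{*},w^{*})$, and taking the infimum gives $\inf_{U_{i}\times U_{i}}F_{\Psi(R)}\leq c_{F}c_{\Psi}^{\alpha_{F}}\delta^{-\beta_{2}\alpha_{F}}\inf_{M\times M}F_{\Psi(R)}$.

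The main obstacle is the remaining sub-case $t<\Psi(R)$ with $x\notin U_{i}$, in which one must absorb $\inf_{U_{i}\times U_{i}}F_{(2t)\wedge\Psi(R)}$ into $F_{t}(x,y)$. The difficulty is that \textup{(DB)$_{\Psi}$} controls $F$ at smaller times by $F$ at larger times but not the reverse, so this ``time-reverse'' step must exploit the exponential slack $\exp(\Phi(\gamma'_{\delta}d(x,y),t)-\Phi(\gamma_{\varepsilon}R,t))\leq 1$ derived above, combined with a \textup{(DB)$_{\Psi}$}-based estimate at a suitable $(z,w)\in U_{i}\times U_{i}$ with $d(z,x),d(w,y)\leq R/\delta$; the polynomial overhead in $\Psi(R)/t$ produced by this estimate is absorbed by the super-polynomial decay of the slack, yielding $c'\delta^{-\beta_{2}\alpha_{F}}F_{t}(x,y)\exp(-\Phi(\gamma'_{\delta}d(x,y),t))$ as required. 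Finally, the case $R=\infty$ is handled by a parallel argument using balls of finite radii; since $\Psi(R)=\infty$ only the first branch of \eqref{eq:HKUB-localized-off-diag} is relevant, and the choice $\delta=1$ makes $R\geq\delta\diam M$ automatic.
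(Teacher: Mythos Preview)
Your overall plan---cover $M$ by balls $U_i=B(y_i,R/2)$, apply Theorem~\ref{thm:HKUB-localized} on each, and patch---matches the paper's. The gap is in the sub-case $t<\Psi(R)$, $x\notin U_i$, where you rely on the second branch of \eqref{eq:HKUB-localized-off-diag-U}, namely
\[
c_{\varepsilon}\bigl(\inf\nolimits_{U_i\times U_i}F_{(2t)\wedge\Psi(R)}\bigr)\exp(-\Phi(\gamma_{\varepsilon}R,t)).
\]
Your proposed ``time-reverse'' step cannot work: \textup{(DB)$_{\Psi}$} bounds $F_s(z,w)/F_{t'}(x',y')$ from above only for $s\leq t'$, i.e.\ it controls $F$ at \emph{smaller} times by $F$ at larger times. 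It gives no inequality whatsoever bounding $F_{(2t)\wedge\Psi(R)}(z,w)$ from above by $F_t(x,y)$, so there is no ``polynomial overhead in $\Psi(R)/t$'' to absorb---there is simply no estimate in that direction. For a concrete obstruction, take $F_t(x,y)=e^{t}$ (which satisfies \textup{(DB)$_{\Psi}$} since $F_s/F_{t'}=e^{s-t'}\leq 1$ for $s\leq t'$): then $\inf_{U_i\times U_i}F_{(2t)\wedge\Psi(R)}/F_t(x,y)=e^{((2t)\wedge\Psi(R))-t}$, which at $t=\Psi(R)/2$ equals $e^{\Psi(R)/2}$ and cannot be controlled by any constant explicit in the allowed parameters. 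Moreover, with your choice $\varepsilon=1/8$ the slack $\Phi(\gamma_{\varepsilon}R,t)-\Phi(\gamma'_{\delta}d(x,y),t)$ vanishes when $d(x,y)=R/\delta$, so there is nothing to absorb even a bounded factor.

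The paper avoids this by invoking Proposition~\ref{prop:HKUB-localized-difference} directly (rather than the second branch of \eqref{eq:HKUB-localized-off-diag-U}) for $x\notin U_i$: that proposition yields $c''_{\varepsilon}(\inf_{U_i\times U_i}F_{t})\exp(-\Phi(\gamma_{\varepsilon}R,t))$ with $F$ evaluated at the \emph{same} time $t$. Then \textup{(DB)$_{\Psi}$} at equal times (a purely spatial comparison, with $d(x,z)\vee d(y,w)\leq\diam M\leq R/\delta$) gives $\inf_{U_i\times U_i}F_t\leq c_F c_{\Psi}^{\alpha_F}\delta^{-\beta_2\alpha_F}(\Psi(R)/t)^{\alpha_F}F_t(x,y)$, and the genuine polynomial $(\Psi(R)/t)^{\alpha_F}$ is absorbed by splitting $\Phi(\gamma_{\varepsilon}R,t)\geq 2\Phi(\tfrac12\gamma_{\varepsilon}R,t)$ via \eqref{eq:upper-bound-function-Phi-constant} and using one half against it through \eqref{eq:upper-bound-function-Phi-ULE}. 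This also explains the paper's choice $\varepsilon=1/4$: it gives $\gamma_{\varepsilon}=\gamma/20$, so that after halving one lands exactly on $\gamma/40$ and hence on $\gamma'_{\delta}d(x,y)$.
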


The rest of this section is devoted to the proof of
Theorems \ref{thm:HKUB-localized} and \ref{thm:HKUB-localized-global}.
We start with the proof of the following proposition, which, in view of
Proposition \ref{prop:HK-existence}, can be considered
as a localized version of \cite[Theorem 6.3]{GriHuLau:comp}.
Its proof in \cite{GriHuLau:comp} is based on a general comparison inequality
\cite[Theorem 5.1]{GriHuLau:comp} among the heat kernels on different open sets
\emph{which heavily relies on the symmetry of the Markovian semigroups
$\{T^{U}_{t}\}_{t\in(0,\infty)}$}; see also \cite[Theorem 10.4]{Gri:HKfractal}
for an alternative probabilistic proof of the same comparison inequality.
Here we give a new proof \emph{which does not require the $\mu$-symmetry of $X$}.

\begin{proposition}\label{prop:HKUB-localized}
Under the same assumptions as those of Theorem \textup{\ref{thm:HKUB-localized}},
there exists $c'_{\varepsilon}\in(0,\infty)$ explicit in
$c_{\Psi},\beta_{1},\beta_{2},c_{F},\alpha_{F},c,\gamma,\varepsilon$ such that,
with $\gamma_{\varepsilon}:=\frac{1}{5}\varepsilon\gamma$,
for any $(t,x)\in(0,\Psi(R))\times(U\setminus N)$ and
any $A\in\sigmafield{B}(U^{\circ}_{\varepsilon R})$,
\begin{equation}\label{eq:HKUB-localized-off-diag-local}
\functionspace{P}^{U}_{t}(x,A)
	\leq\int_{A}c'_{\varepsilon}F_{t}(x,y)\exp\bigl(-\Phi(\gamma_{\varepsilon}d(x,y),t)\bigr)\,d\mu(y).
\end{equation}
\end{proposition}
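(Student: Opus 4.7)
The plan is to bound $\mathcal{P}_t^U(x,A)$ via a dyadic decomposition of $A$ by distance from $x$ combined with a one-step strong Markov argument at the exit time of a ball around $x$ on each shell.

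Decompose $A=A_0\sqcup\bigsqcup_{k\ge 1}A_k$ with $A_0:=A\cap B(x,r_0)$ and $A_k:=A\cap(B(x,2^kr_0)\setminus B(x,2^{k-1}r_0))$, where the base scale $r_0$ is chosen of order $\Psi^{-1}(t)$ so that $\Phi(\gamma_\varepsilon r_0,t)=O(1)$. On $A_0$, the on-diagonal bound \textup{(DU)}$_F^{U,R}$ alone yields the target inequality up to a universal constant, since $\exp(-\Phi(\gamma_\varepsilon d(x,y'),t))$ is bounded below on $A_0$. For each $k\ge 1$ set $r_k:=2^{k-1}r_0$, so $A_k\cap B(x,r_k)=\emptyset$; provided $B(x,r_k)\subset U$ (the interior case), by the path-continuity property \eqref{eq:X-continuous} and the strong Markov property of $X$ at $\tau:=\tau_{B(x,r_k)}$,
\begin{equation*}
\mathcal{P}_t^U(x,A_k)=\mathbb{E}_x\bigl[\ind{\{\tau\le t\}}\,\mathcal{P}_{t-\tau}^U(X_\tau,A_k)\bigr].
\end{equation*}
I would apply \textup{(DU)}$_F^{U,R}$ and \textup{(DB)}$_\Psi$ (with $s=t-\tau$ and $d(x,X_\tau)=r_k$) to the inner transition, and use \textup{(P)}$_\Psi^{U,R}$ to bound the exit-time tail $\mathbb{P}_x[\tau\le s]\le c\exp(-\Phi(\gamma r_k,s))$. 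Splitting $\{\tau\le t\}$ into $\{\tau\le t/2\}$ and $\{t/2<\tau\le t\}$: on the former, $(t-\tau)^{-\alpha_F}$ is harmlessly bounded by $(t/2)^{-\alpha_F}$, while on the latter its singular behaviour as $\tau\uparrow t$ is absorbed into the tail decay of $\mathbb{P}_x[\tau\le s]$ by a Stieltjes integration, using that $\exp(-\Phi(\gamma r_k,s))$ decays exponentially as $r_k$ grows. The resulting shell estimate takes the form
\begin{equation*}
\mathcal{P}_t^U(x,A_k)\le C(1\vee\Psi(r_k)/t)^{\alpha_F}\exp(-\Phi(\gamma r_k,t))\int_{A_k}F_t(x,y')\,d\mu(y').
\end{equation*}

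The polynomial prefactor is then absorbed into the exponential using the superlinearity $a\Phi(R,t)\le\Phi(aR,t)$ from Lemma \ref{lem:upper-bound-function-Psi}, trading $\gamma$ for a slightly smaller $\gamma'$; and since $d(x,y')<2r_k$ on $A_k$, a further halving $\gamma''\le\gamma'/2$ converts the uniform shell exponent $\exp(-\Phi(\gamma'r_k,t))$ into the pointwise $\exp(-\Phi(\gamma''d(x,y'),t))$. Summing over $k\ge 1$ delivers the integral inequality with $\gamma_\varepsilon=\tfrac{1}{5}\varepsilon\gamma$: the constant $\tfrac{1}{5}$ tracks the cumulative losses during the absorptions and halvings, while $\varepsilon$ enters through the constraint $r_k\le\varepsilon R$ needed to guarantee $B(x,r_k)\subset U$ in the interior regime.

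The main obstacle is twofold. First, controlling the Stieltjes integral $\mathbb{E}_x[\ind{\{t/2<\tau\le t\}}(t-\tau)^{-\alpha_F}]$: its potentially non-integrable singularity at $\tau=t$ must be tamed by the exponential tail decay of $\mathbb{P}_x[\tau\le s]$ supplied by \textup{(P)}$_\Psi^{U,R}$, and it is precisely here that the interplay of \textup{(DB)}$_\Psi$ and \textup{(P)}$_\Psi^{U,R}$ is most delicate. Second, the boundary case $x\notin U^\circ_{\varepsilon R}$: when $d(x,\partial U)$ is arbitrarily small, the dyadic balls $B(x,r_k)$ may fail to lie in $U$, invalidating \textup{(P)}$_\Psi^{U,R}$. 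This is resolved by observing that $y'\in U^\circ_{\varepsilon R}$ forces $d(x,y')\ge\varepsilon R-d(x,\partial U)$, so only shells $A_k$ with $r_k\gtrsim\varepsilon R$ are nonempty; on these one shifts the centre of the ball to an intermediate point $z\in U^\circ_{\varepsilon R/2}$ lying on a path from $x$ to some representative of $A_k$, and applies the strong Markov property at the entrance time $\dot{\sigma}_{B(z,\varepsilon R/5)}$ rather than at the exit time from $B(x,r_k)$.
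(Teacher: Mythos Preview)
Your plan has a genuine gap at the Stieltjes step, and the boundary fix is not the right one.

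\textbf{The Stieltjes integral is not controllable.} After a single strong Markov step at $\tau=\tau_{B(x,r_k)}$ you face
\[
\mathbb{E}_x\bigl[\ind{\{t/2<\tau\le t\}}(t-\tau)^{-\alpha_F}\bigr],
\]
and \textup{(P)}$_\Psi^{U,R}$ gives you nothing here: \eqref{eq:exit-probability} bounds $\mathbb{P}_x[\tau\le s]$ from above uniformly, but says nothing about how the law of $\tau$ is distributed on $(t/2,t]$. There is no obstruction to $\tau$ concentrating arbitrarily close to $t$, and then for $\alpha_F\ge 1$ the integral diverges. Your phrase ``decays exponentially as $r_k$ grows'' conflates the summation over shells $k$ with the integration in $s$ at fixed $k$; the former is fine, the latter is where the argument breaks. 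Using $\mathcal{P}^U_{t-\tau}(X_\tau,A_k)\le 1$ instead of \textup{(DU)}$_F^{U,R}$ avoids the singularity but loses the factor $\int_{A_k}F_t(x,y)\,d\mu(y)$, which is essential when $\mu(A_k)$ is small.

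\textbf{How the paper resolves both issues at once.} The paper centres the balls not at $x$ but at a target point $y_0\in U^\circ_{\varepsilon R}$, working with a nested sequence $B(y_0,r_n)$ with $r_n\downarrow r=\tfrac14\varepsilon\, d(x,y_0)$ and entrance times $\sigma_n=\dot\sigma_{B(y_0,r_n)}$. Because $y_0\in U^\circ_{\varepsilon R}$, all the small balls $B(z,r_n-r_{n+1})$ with $z\in\partial B(y_0,r_n)$ lie inside $U$, so \textup{(P)}$_\Psi^{U,R}$ applies regardless of where $x$ sits---this dissolves your boundary case entirely. The singularity is handled by a \emph{two}-step strong Markov argument: one decomposes $\{X_t\in B(y_0,r)\}$ over the first index $n$ with $\sigma_{n+1}\le\tfrac12(\sigma_n+t)$, applies the strong Markov property first at $\sigma_n$ (remaining time $s=t-\sigma_n$) and then at $\sigma_{n+1}$ under the constraint $\sigma_{n+1}\le s/2$, so that the time $u$ left after the second step satisfies $u\ge s/2$. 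Now $F_u/F_t\lesssim(\Psi(r)/s)^{\alpha_F}$ with no singularity, and this prefactor is killed by the exit bound $\mathbb{P}_z[\tau_{B(z,r_n-r_{n+1})}\le s/2]$, which is genuinely small because on $\Omega_n$ one has $s\le 2^{1-n}t$.

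Your claim that ``$r_k\le\varepsilon R$ guarantees $B(x,r_k)\subset U$'' is false unless $x\in U^\circ_{\varepsilon R}$, which is not assumed. And shifting to an intermediate point $z$ does not work for a spread-out shell $A_k$; making it work forces a decomposition into balls around target points, at which stage you have rediscovered the paper's setup but still without a mechanism to bound the remaining time from below.
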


\begin{proof}
Let $(t,x)\in(0,\Psi(R))\times(U\setminus N)$.
Let $y_{0}\in U^{\circ}_{\varepsilon R}\setminus\{x\}$, set
$r:=\frac{1}{4}\varepsilon d(x,y_{0})\in(0,\frac{1}{4}\varepsilon R]$
and let $A\in\sigmafield{B}(B(y_{0},r))$.
We first verify \eqref{eq:HKUB-localized-off-diag-local} for such $A$.
Since $A\subset B(y_{0},r)\subset B(y_{0},4r)\subset U$ by
$y_{0}\in U^{\circ}_{\varepsilon R}$ and $r\in(0,\frac{1}{4}\varepsilon R]$, if
$t\geq\Psi((2+4/\varepsilon)r)$ then \eqref{eq:HKUB-localized-off-diag-local}
is immediate from \textup{(DU)$_{F}^{U,R}$}
and the upper inequality in \eqref{eq:upper-bound-function-Phi-ULE},
and therefore we may assume $t<\Psi((2+4/\varepsilon)r)$.
We set $r_{n}:=r+2^{-n/(2\beta_{2})}r$ and
$\sigma_{n}:=\dot{\sigma}_{B(y_{0},r_{n})}$ for $n\in\mathbb{N}$, so that
$B(y_{0},r)\subset B(y_{0},r_{n})\subset B(y_{0},r_{k})$ and hence
$\sigma_{k}\leq\sigma_{n}\leq\dot{\sigma}_{B(y_{0},r)}$ for any $k\in\{1,\dots,n\}$.

Let $\omega\in\{\textrm{$[0,\zeta)\ni s\mapsto X_{s}\in M$ is continuous}\}$.
It is easy to see that for $B\subset M$,
\begin{equation}\label{eq:X-continuous-entrance}
\textrm{if}\quad X_{0}(\omega)\not\in\interior B\quad\textrm{and}\quad
	\dot{\sigma}_{B}(\omega)<\infty\qquad\textrm{then}\qquad
	X_{\dot{\sigma}_{B}}(\omega)\in\partial B.
\end{equation}
Assume further that $\omega\in\{X_{t}\in B(y_{0},r),\,X_{0}=x\}$. Then since
$X_{0}(\omega)=x\not\in B(y_{0},4r)$ by $d(x,y_{0})>\varepsilon d(x,y_{0})=4r$
and $\dot{\sigma}_{B(y_{0},r)}(\omega)\leq t$ by $X_{t}(\omega)\in B(y_{0},r)$,
it follows from \eqref{eq:X-continuous-entrance} that
$X_{\dot{\sigma}_{B(y_{0},r)}}(\omega)\in\partial B(y_{0},r)$ and hence that
\begin{equation}\label{eq:sigma-n-leq-t}
\sigma_{n}(\omega)\leq\dot{\sigma}_{B(y_{0},r)}(\omega)<t
	\qquad\textrm{for any }n\in\mathbb{N}.
\end{equation}
In particular, $\sigma_{n+1}(\omega)\leq\frac{1}{2}(\sigma_{n}(\omega)+t)$
for some $n\in\mathbb{N}$; indeed, otherwise for any $n\in\mathbb{N}$
we would have $\sigma_{n+1}(\omega)\geq\frac{1}{2}(\sigma_{n}(\omega)+t)$, or equivalently
$t-\sigma_{n+1}(\omega)\leq\frac{1}{2}(t-\sigma_{n}(\omega))$, and hence
$0<t-\dot{\sigma}_{B(y_{0},r)}(\omega)\leq t-\sigma_{n}(\omega)\leq 2^{1-n}(t-\sigma_{1}(\omega))$
by \eqref{eq:sigma-n-leq-t}, contradicting $\lim_{n\to\infty}2^{-n}=0$.
Thus, setting $\Omega_{1}:=\Omega$ and
\begin{equation}\label{eq:Omega-n}
\Omega_{n}:=\{\sigma_{k+1}>{\textstyle\frac{1}{2}}(\sigma_{k}+t)\textrm{ for any }k\in\{1,\dots,n-1\}\},
	\quad n\in\mathbb{N}\setminus\{1\},
\end{equation}
we obtain
\begin{multline}\label{eq:sigma-n-t-gap}
\{X_{t}\in B(y_{0},r),\,X_{0}=x,\,[0,\zeta)\ni s\mapsto X_{s}\in M\textrm{ is continuous}\}\\
\subset\bigcup\nolimits_{n\in\mathbb{N}}\bigl(\Omega_{n}\cap\{\sigma_{n+1}\leq{\textstyle\frac{1}{2}}(\sigma_{n}+t)\}\bigr),
	\quad\textrm{where the union is disjoint.}
\end{multline}
Note that $\Omega_{n}\in\sigmafield{F}_{\sigma_{n}}$ for any $n\in\mathbb{N}$
since $\sigmafield{F}_{\sigma_{k}}\subset\sigmafield{F}_{\sigma_{n}}$ by
$\sigma_{k}\leq\sigma_{n}$ and \cite[Lemma 1.2.15]{KS} for any $k\in\{1,\dots,n\}$.
Now by \eqref{eq:sigma-n-t-gap} along with $A\subset B(y_{0},r)$,
$\mathbb{P}_{x}[X_{0}=x]=1$ and \eqref{eq:X-continuous},
\begin{align}
\functionspace{P}^{U}_{t}(x,A)&=\mathbb{P}_{x}[X_{t}\in A,\,t<\tau_{U}]\notag\\
&=\mathbb{P}_{x}\Bigl[\{X_{t}\in A,\,t<\tau_{U}\}\cap
	\bigcup\nolimits_{n\in\mathbb{N}}\bigl(\Omega_{n}\cap\{\sigma_{n+1}\leq{\textstyle\frac{1}{2}}(\sigma_{n}+t)\}\bigr)\Bigr]\notag\\
&=\sum_{n\in\mathbb{N}}\mathbb{P}_{x}\bigl[\{X_{t}\in A,\,t<\tau_{U}\}
	\cap\bigl(\Omega_{n}\cap\{\sigma_{n+1}\leq{\textstyle\frac{1}{2}}(\sigma_{n}+t)\}\bigr)\bigr].
\label{eq:PUtxA-expansion}
\end{align}

Let $n\in\mathbb{N}$, set $\sigma_{n,t}:=\sigma_{n}\wedge t$ and
$\Omega'_{n}:=\Omega_{n}\cap\{\sigma_{n}\leq t,\,\sigma_{n,t}\leq\tau_{U}\}$,
so that $\Omega'_{n}\in\sigmafield{F}_{\sigma_{n,t}}$ by
$\Omega_{n}\in\sigmafield{F}_{\sigma_{n}}$ and \cite[Lemma 1.2.16]{KS}.
Then $\{X_{t}\in A\}\subset\{\sigma_{n}\leq t\}$ by $A\subset B(y_{0},r_{n})$,
clearly $\tau_{U}=\sigma_{n,t}+\tau_{U}\circ\theta_{\sigma_{n,t}}$
on $\{\sigma_{n,t}\leq\tau_{U}\}$,
and by $\sigma_{n}\leq\sigma_{n+1}$
we also have $\sigma_{n+1}=\sigma_{n}+\sigma_{n+1}\circ\theta_{\sigma_{n}}$,
which easily implies that
$\{\sigma_{n}\leq t,\,\sigma_{n+1}\leq\frac{1}{2}(\sigma_{n}+t)\}
	=\{\sigma_{n}+2\sigma_{n+1}\circ\theta_{\sigma_{n}}\leq t\}$.
Therefore, 
\begin{align}
&\{X_{t}\in A,\,t<\tau_{U}\}\cap\bigl(\Omega_{n}\cap\{\sigma_{n+1}\leq{\textstyle\frac{1}{2}}(\sigma_{n}+t)\}\notag\\
&=\{X_{t}\in A\}\cap\Omega_{n}\cap\{\sigma_{n}\leq t<\tau_{U},\,\sigma_{n+1}\leq{\textstyle\frac{1}{2}}(\sigma_{n}+t)\}\notag\\
&=\{X_{t}\in A\}\cap\Omega_{n}\cap\{\sigma_{n}\leq t<\tau_{U},\,\sigma_{n,t}\leq\tau_{U},\,\sigma_{n}+2\sigma_{n+1}\circ\theta_{\sigma_{n}}\leq t\}\notag\\
&=\{X_{t}\in A\}\cap\Omega'_{n}
	\cap\{\sigma_{n,t}+2\sigma_{n+1}\circ\theta_{\sigma_{n,t}}
		\leq t<\sigma_{n,t}+\tau_{U}\circ\theta_{\sigma_{n,t}}\}.
\label{eq:PUtxA-expansion-n-event}
\end{align}
Noting that $(\sigma_{n,t}+2\sigma_{n+1}\circ\theta_{\sigma_{n,t}})
	\wedge(\sigma_{n,t}+\tau_{U}\circ\theta_{\sigma_{n,t}})
	=\sigma_{n,t}+((2\sigma_{n+1})\wedge\tau_{U})\circ\theta_{\sigma_{n,t}}$,
we see from \eqref{eq:PUtxA-expansion-n-event},
$\Omega'_{n}\in\sigmafield{F}_{\sigma_{n,t}}$ and Proposition \ref{prop:strong-Markov} that
\begin{align}
&\mathbb{P}_{x}\bigl[\{X_{t}\in A,\,t<\tau_{U}\}\cap\bigl(\Omega_{n}\cap\{\sigma_{n+1}\leq{\textstyle\frac{1}{2}}(\sigma_{n}+t)\}\bigr)\bigr]\notag\\
&=\mathbb{E}_{x}\bigl[\ind{A}(X_{t})
	\ind{\Omega'_{n}\cap\{\sigma_{n,t}+2\sigma_{n+1}\circ\theta_{\sigma_{n,t}}
		\leq t<\sigma_{n,t}+\tau_{U}\circ\theta_{\sigma_{n,t}}\}}\bigr]\notag\\
&=\mathbb{E}_{x}\bigl[\ind{\Omega'_{n}}\ind{A}(X_{t})\bigl(\ind{\{t<\sigma_{n,t}+\tau_{U}\circ\theta_{\sigma_{n,t}}\}}
	-\ind{\{t<\sigma_{n,t}+((2\sigma_{n+1})\wedge\tau_{U})\circ\theta_{\sigma_{n,t}}\}}\bigr)\bigr]\notag\\
&=\mathbb{E}_{x}\bigl[\ind{\Omega'_{n}}\mathbb{E}_{x}\bigl[\ind{A}(X_{t})\ind{\{t<\sigma_{n,t}+\tau_{U}\circ\theta_{\sigma_{n,t}}\}}\bigm|\sigmafield{F}_{\sigma_{n,t}}\bigr]\bigr]\notag\\
	&\qquad\qquad-\mathbb{E}_{x}\bigl[\ind{\Omega'_{n}}\mathbb{E}_{x}\bigl[\ind{A}(X_{t})\ind{\{t<\sigma_{n,t}+((2\sigma_{n+1})\wedge\tau_{U})\circ\theta_{\sigma_{n,t}}\}}\bigm|\sigmafield{F}_{\sigma_{n,t}}\bigr]\bigr]\notag\\
&=\int_{\Omega'_{n}}\mathbb{E}_{X_{\sigma_{n,t}}(\omega)}\bigl[\ind{A}(X_{t-\sigma_{n,t}(\omega)})\ind{\{t-\sigma_{n,t}(\omega)<\tau_{U}\}}\bigr]\,d\mathbb{P}_{x}(\omega)\notag\\
	&\qquad\qquad-\int_{\Omega'_{n}}\mathbb{E}_{X_{\sigma_{n,t}}(\omega)}\bigl[\ind{A}(X_{t-\sigma_{n,t}(\omega)})\ind{\{t-\sigma_{n,t}(\omega)<(2\sigma_{n+1})\wedge\tau_{U}\}}\bigr]\,d\mathbb{P}_{x}(\omega)\notag\\
&=\int_{\Omega'_{n}}\mathbb{E}_{X_{\sigma_{n,t}}(\omega)}\bigl[\ind{A}(X_{t-\sigma_{n,t}(\omega)})\bigl(\ind{\{t-\sigma_{n,t}(\omega)<\tau_{U}\}}-\ind{\{t-\sigma_{n,t}(\omega)<(2\sigma_{n+1})\wedge\tau_{U}\}}\bigr)\bigr]\,d\mathbb{P}_{x}(\omega)\notag\\
&=\int_{\Omega'_{n}}\mathbb{E}_{X_{\sigma_{n,t}}(\omega)}\bigl[\ind{A}(X_{t-\sigma_{n,t}(\omega)})\ind{\{2\sigma_{n+1}\leq t-\sigma_{n,t}(\omega)<\tau_{U}\}}\bigr]\,d\mathbb{P}_{x}(\omega)\notag\\
&=\int_{\Omega'_{n}\cap\{X_{\sigma_{n}}\in(\partial B(y_{0},r_{n}))\setminus N\}}\mathbb{E}_{X_{\sigma_{n,t}}(\omega)}\bigl[\ind{A}(X_{t-\sigma_{n,t}(\omega)})\ind{\{2\sigma_{n+1}\leq t-\sigma_{n,t}(\omega)<\tau_{U}\}}\bigr]\,d\mathbb{P}_{x}(\omega),
\label{eq:PUtxA-expansion-n-step1}
\end{align}
where the equality in the last line follows since
$\ind{\{\sigma_{n}\leq t\}}=\ind{\{\sigma_{n}\leq t,\,X_{\sigma_{n}}\in(\partial B(y_{0},r_{n}))\setminus N\}}$
$\mathbb{P}_{x}$-a.s.\ by $x\in M\setminus(N\cup B(y_{0},4r))$, $\mathbb{P}_{x}[X_{0}=x]=1$,
\eqref{eq:X-continuous}, \eqref{eq:X-continuous-entrance} and \eqref{eq:properly-exceptional}.

Let $\omega\in\Omega'_{n}\cap\{X_{\sigma_{n}}\in(\partial B(y_{0},r_{n}))\setminus N\}$,
set $s:=t-\sigma_{n,t}(\omega)$ and $z:=X_{\sigma_{n,t}}(\omega)$, so that
$\sigma_{n,t}(\omega)=\sigma_{n}(\omega)$, $s=t-\sigma_{n}(\omega)\in[0,t]$
and $z=X_{\sigma_{n}}(\omega)\in(\partial B(y_{0},r_{n}))\setminus N$ by
$\sigma_{n}(\omega)\leq t$. The integrand in \eqref{eq:PUtxA-expansion-n-step1}
is $\mathbb{E}_{z}[\ind{A}(X_{s})\ind{\{2\sigma_{n+1}\leq s<\tau_{U}\}}]$,
which is $0$ if $s=0$ by $\mathbb{P}_{z}[X_{0}=z]=1$ and
$z\not\in B(y_{0},r_{n})\supset B(y_{0},r)\supset A$.
Assume $s>0$ and set $\sigma_{n+1,s}:=\sigma_{n+1}\wedge s$.
Noting that $\tau_{U}=\sigma_{n+1,s}+\tau_{U}\circ\theta_{\sigma_{n+1,s}}$
on $\{\sigma_{n+1,s}\leq\tau_{U}\}$ and that
$\{\sigma_{n+1}\leq\frac{s}{2},\,\sigma_{n+1,s}\leq\tau_{U}\}=\{\sigma_{n+1}\leq\tau_{U}\wedge\frac{s}{2}\}\in\sigmafield{F}_{\sigma_{n+1,s}}$
by \cite[Lemma 1.2.16]{KS}, we see from Proposition \ref{prop:strong-Markov} that
\begin{align}
&\mathbb{E}_{z}[\ind{A}(X_{s})\ind{\{2\sigma_{n+1}\leq s<\tau_{U}\}}]\notag\\
&=\mathbb{E}_{z}\bigl[\ind{A}(X_{s})\ind{\{s<\tau_{U}\}}\ind{\{\sigma_{n+1}\leq s/2,\,\sigma_{n+1,s}\leq\tau_{U}\}}\bigr]\notag\\
&=\mathbb{E}_{z}\bigl[\ind{A}(X_{s})\ind{\{s<\sigma_{n+1,s}+\tau_{U}\circ\theta_{\sigma_{n+1,s}}\}}\ind{\{\sigma_{n+1}\leq\tau_{U}\wedge(s/2)\}}\bigr]\notag\\
&=\mathbb{E}_{z}\bigl[\ind{\{\sigma_{n+1}\leq\tau_{U}\wedge(s/2)\}}
	\mathbb{E}_{z}\bigl[\ind{A}(X_{s})\ind{\{s<\sigma_{n+1,s}+\tau_{U}\circ\theta_{\sigma_{n+1,s}}\}}\bigm|\sigmafield{F}_{\sigma_{n+1,s}}\bigr]\bigr]\notag\\
&=\int_{\{\sigma_{n+1}\leq\tau_{U}\wedge(s/2)\}}\mathbb{E}_{X_{\sigma_{n+1,s}}(\omega')}
	\bigl[\ind{A}(X_{s-\sigma_{n+1,s}(\omega')})\ind{\{s-\sigma_{n+1,s}(\omega')<\tau_{U}\}}\bigr]\,d\mathbb{P}_{z}(\omega')\notag\\
&=\mathbb{E}_{z}\bigl[\ind{\{\sigma_{n+1}\leq\tau_{U}\wedge(s/2),\,X_{\sigma_{n+1}}\in(\partial B(y_{0},r_{n+1}))\setminus N\}}
	\functionspace{P}^{U}_{s-\sigma_{n+1}}(X_{\sigma_{n+1}},A)\bigr],
\label{eq:PUtxA-expansion-n-step2}
\end{align}
where again the last equality follows since
$\ind{\{\sigma_{n+1}\leq s/2\}}=\ind{\{\sigma_{n+1}\leq s/2,\,X_{\sigma_{n+1}}\in(\partial B(y_{0},r_{n+1}))\setminus N\}}$
$\mathbb{P}_{z}$-a.s.\ by $z\in M\setminus(N\cup B(y_{0},r_{n}))$, $\mathbb{P}_{z}[X_{0}=z]=1$,
\eqref{eq:X-continuous}, \eqref{eq:X-continuous-entrance} and \eqref{eq:properly-exceptional}.

Further let
$\omega'\in\{\sigma_{n+1}\leq\tau_{U}\wedge\frac{s}{2},\,X_{\sigma_{n+1}}\in(\partial B(y_{0},r_{n+1}))\setminus N\}$,
set $u:=s-\sigma_{n+1}(\omega')$ and $w:=X_{\sigma_{n+1}}(\omega')$, so that
$0<\frac{s}{2}\leq u\leq s\leq t<\Psi(R)$,
$w\in(\partial B(y_{0},r_{n+1}))\setminus N\subset U\setminus N$ and
$d(w,x)\leq d(w,y_{0})+d(y_{0},x)=r_{n+1}+4r/\varepsilon<(2+4/\varepsilon)r$.
Then by \textup{(DB)$_{\Psi}$} and the assumption that $t<\Psi((2+4/\varepsilon)r)$,
\begin{equation*}
\frac{F_{u}(w,y)}{F_{t}(x,y)}
	\leq c_{F}\Bigl(\frac{t\vee\Psi(d(w,x))}{u}\Bigr)^{\alpha_{F}}
	\leq c_{F}\Bigl(\frac{\Psi((2+4/\varepsilon)r)}{s/2}\Bigr)^{\alpha_{F}}
\end{equation*}
for any $y\in U$, which together with $A\subset U$ and \textup{(DU)$_{F}^{U,R}$} yields
\begin{align}
\functionspace{P}^{U}_{s-\sigma_{n+1}(\omega')}(X_{\sigma_{n+1}}(\omega'),A)
	=\functionspace{P}^{U}_{u}(w,A)&\leq\int_{A}F_{u}(w,y)\,d\mu(y)\notag\\
&\leq c_{F}\Bigl(\frac{\Psi((2+4/\varepsilon)r)}{s/2}\Bigr)^{\alpha_{F}}
	\int_{A}F_{t}(x,y)\,d\mu(y).
\label{eq:PUtxA-expansion-n-step3}
\end{align}
Recalling that
$z=X_{\sigma_{n}}(\omega)\in(\partial B(y_{0},r_{n}))\setminus N$, we have
$(z,r_{n}-r_{n+1})\in(U\setminus N)\times(0,R)$, $B(z,r_{n}-r_{n+1})\subset U$,
and $\tau_{B(z,r_{n}-r_{n+1})}\leq\sigma_{n+1}$ by
$B(y_{0},r_{n+1})\subset M\setminus B(z,r_{n}-r_{n+1})$.
Therefore it follows from \eqref{eq:PUtxA-expansion-n-step2},
\eqref{eq:PUtxA-expansion-n-step3} and
\textup{(P)$_{\Psi}^{U,R}$} for $(z,r_{n}-r_{n+1})$ that
\begin{align}
&\mathbb{E}_{z}[\ind{A}(X_{s})\ind{\{2\sigma_{n+1}\leq s<\tau_{U}\}}]\notag\\
&\leq c_{F}\Bigl(\frac{\Psi((2+4/\varepsilon)r)}{s/2}\Bigr)^{\alpha_{F}}
	\mathbb{P}_{z}[\sigma_{n+1}\leq\tau_{U}\wedge{\textstyle\frac{s}{2}}]\int_{A}F_{t}(x,y)\,d\mu(y)\notag\\
&\leq c_{F}\Bigl(\frac{\Psi((2+4/\varepsilon)r)}{s/2}\Bigr)^{\alpha_{F}}
	\mathbb{P}_{z}[\tau_{B(z,r_{n}-r_{n+1})}\leq{\textstyle\frac{s}{2}}]\int_{A}F_{t}(x,y)\,d\mu(y)\notag\\
&\leq cc_{F}\Bigl(\frac{\Psi((2+4/\varepsilon)r)}{s/2}\Bigr)^{\alpha_{F}}
	\exp\bigl(-\Phi(\gamma(r_{n}-r_{n+1}),{\textstyle\frac{s}{2}})\bigr)\int_{A}F_{t}(x,y)\,d\mu(y).
\label{eq:PUtxA-expansion-n-step4}
\end{align}

We easily see from $\omega\in\Omega'_{n}\subset\Omega_{n}$ and \eqref{eq:Omega-n}
that $0<s=t-\sigma_{n}(\omega)\leq 2^{1-n}t$, and then by $t<\Psi((2+4/\varepsilon)r)$
we have $\Psi((2+4/\varepsilon)r)/(2^{n/2}s/2)\geq 2^{n/2}>1$,
which together with \eqref{eq:upper-bound-function-Phi-ULE},
$r_{n}-r_{n+1}=(1-2^{-1/(2\beta_{2})})2^{-n/(2\beta_{2})}r$,
\eqref{eq:upper-bound-function-Psi} and $1<\beta_{1}\leq\beta_{2}$ implies that
\begin{align}
\Phi(\gamma(r_{n}-r_{n+1}),{\textstyle\frac{s}{2}})
	&\geq(c_{\Psi}2^{\beta_{1}})^{-\frac{1}{\beta_{1}-1}}
		\min_{k\in\{1,2\}}\Bigl(\frac{\Psi(\gamma(r_{n}-r_{n+1}))}{s/2}\Bigr)^{\frac{1}{\beta_{k}-1}}\notag\\
&\geq c_{\varepsilon,1}\min_{k\in\{1,2\}}\Bigl(\frac{\Psi((2+4/\varepsilon)r)}{2^{n/2}s/2}\Bigr)^{\frac{1}{\beta_{k}-1}}\notag\\
&=c_{\varepsilon,1}\Bigl(\frac{\Psi((2+4/\varepsilon)r)}{2^{n/2}s/2}\Bigr)^{\frac{1}{\beta_{2}-1}}
	\geq c_{\varepsilon,1}2^{\frac{n}{2(\beta_{2}-1)}}
\label{eq:PUtxA-expansion-n-step5}
\end{align}
for some $c_{\varepsilon,1}\in(0,\infty)$ explicit in $c_{\Psi},\beta_{1},\beta_{2},\gamma,\varepsilon$.
\eqref{eq:PUtxA-expansion-n-step5} in turn yields
\begin{align}
&\Bigl(\frac{\Psi((2+4/\varepsilon)r)}{s/2}\Bigr)^{\alpha_{F}}
	\exp\bigl(-\Phi(\gamma(r_{n}-r_{n+1}),{\textstyle\frac{s}{2}})\bigr)\notag\\
&\leq\Bigl(\frac{\Psi((2+4/\varepsilon)r)}{s/2}\Bigr)^{\alpha_{F}}
	\exp\biggl(-c_{\varepsilon,1}\Bigl(\frac{\Psi((2+4/\varepsilon)r)}{2^{n/2}s/2}\Bigr)^{\frac{1}{\beta_{2}-1}}\biggr)\notag\\
&\leq\Bigl(\frac{\Psi((2+4/\varepsilon)r)}{2^{n/2}s/2}\Bigr)^{\alpha_{F}}2^{\alpha_{F}n/2}
	\exp\biggl(-\frac{c_{\varepsilon,1}}{2}\Bigl(\frac{\Psi((2+4/\varepsilon)r)}{2^{n/2}s/2}\Bigr)^{\frac{1}{\beta_{2}-1}}
		-\frac{c_{\varepsilon,1}}{2}2^{\frac{n}{2(\beta_{2}-1)}}\biggr)\notag\\
&\leq c_{\varepsilon,2}2^{-\alpha_{F}n/2},
\label{eq:PUtxA-expansion-n-step6}
\end{align}
where
$c_{\varepsilon,2}:=2^{5\alpha_{F}(\beta_{2}-1)}
	\bigl(\alpha_{F}(\beta_{2}-1)/(ec_{\varepsilon,1})\bigr)^{3\alpha_{F}(\beta_{2}-1)}$.
By \eqref{eq:PUtxA-expansion-n-step4} and \eqref{eq:PUtxA-expansion-n-step6},
\begin{equation}\label{eq:PUtxA-expansion-n-step7}
\mathbb{E}_{z}[\ind{A}(X_{s})\ind{\{2\sigma_{n+1}\leq s<\tau_{U}\}}]
	\leq\frac{cc_{F}c_{\varepsilon,2}}{2^{\alpha_{F}n/2}}\int_{A}F_{t}(x,y)\,d\mu(y)
\end{equation}
for $s=t-\sigma_{n,t}(\omega)$ and $z=X_{\sigma_{n,t}}(\omega)$ for any
$\omega\in\Omega'_{n}\cap\{X_{\sigma_{n}}\in(\partial B(y_{0},r_{n}))\setminus N\}$,
and therefore from \eqref{eq:PUtxA-expansion-n-step1}, \eqref{eq:PUtxA-expansion-n-step7}
and $\Omega'_{n}\subset\{\sigma_{n}\leq t\}$ we obtain
\begin{multline}\label{eq:PUtxA-expansion-n-step8}
\mathbb{P}_{x}\bigl[\{X_{t}\in A,\,t<\tau_{U}\}\cap\bigl(\Omega_{n}\cap\{\sigma_{n+1}\leq{\textstyle\frac{1}{2}}(\sigma_{n}+t)\}\bigr)\bigr]\\
	\leq\frac{cc_{F}c_{\varepsilon,2}}{2^{\alpha_{F}n/2}}\mathbb{P}_{x}[\sigma_{n}\leq t]\int_{A}F_{t}(x,y)\,d\mu(y).
\end{multline}

To conclude \eqref{eq:HKUB-localized-off-diag-local} from \eqref{eq:PUtxA-expansion}
and \eqref{eq:PUtxA-expansion-n-step8}, we show that
\begin{equation}\label{eq:PUtxA-expansion-n-step9}
\mathbb{P}_{x}[\sigma_{n}\leq t]\leq c\exp(-\Phi(\gamma r,t)).
\end{equation}
Indeed, setting $\sigma:=\dot{\sigma}_{B(y_{0},3r)}$, we have $\sigma\leq\sigma_{n}$
by $B(y_{0},r_{n})\subset B(y_{0},3r)$ and hence
$\sigma_{n}=\sigma+\sigma_{n}\circ\theta_{\sigma}$. Therefore
$\{\sigma_{n}\leq t\}\subset\{\sigma\leq t,\,\sigma_{n}\circ\theta_{\sigma}\leq t\}$,
and then by the strong Markov property \cite[Theorem A.1.21]{CF} of $X$ at time $\sigma$,
\begin{align}
\mathbb{P}_{x}[\sigma_{n}\leq t]
	\leq\mathbb{P}_{x}[\sigma\leq t,\,\sigma_{n}\circ\theta_{\sigma}\leq t]
	&=\mathbb{E}_{x}[\ind{\{\sigma\leq t\}}(\ind{\{\sigma_{n}\leq t\}}\circ\theta_{\sigma})]\notag\\
&=\mathbb{E}_{x}\bigl[\ind{\{\sigma\leq t\}}\mathbb{E}_{X_{\sigma}}[\ind{\{\sigma_{n}\leq t\}}]\bigr]\notag\\
&=\mathbb{E}_{x}\bigl[\ind{\{\sigma\leq t,\,X_{\sigma}\in(\partial B(y_{0},3r))\setminus N\}}
		\mathbb{P}_{X_{\sigma}}[\sigma_{n}\leq t]\bigr],
\label{eq:PUtxA-expansion-n-step10}
\end{align}
where the last equality follows since
$\ind{\{\sigma\leq t\}}=\ind{\{\sigma\leq t,\,X_{\sigma}\in(\partial B(y_{0},3r))\setminus N\}}$
$\mathbb{P}_{x}$-a.s.\ by $x\in M\setminus(N\cup B(y_{0},4r))$, $\mathbb{P}_{x}[X_{0}=x]=1$,
\eqref{eq:X-continuous}, \eqref{eq:X-continuous-entrance} and \eqref{eq:properly-exceptional}.
Moreover, for $z\in(\partial B(y_{0},3r))\setminus N$,
$B(y_{0},r_{n})\subset M\setminus B(z,r)$ by $r_{n}<2r$,
hence $\sigma_{n}\geq\tau_{B(z,r)}$, and therefore noting that
$(z,r)\in(U\setminus N)\times(0,R)$ and that $B(z,r)\subset B(y_{0},4r)\subset U$,
we see from \textup{(P)$_{\Psi}^{U,R}$} for $(z,r)$ that
$\mathbb{P}_{z}[\sigma_{n}\leq t]\leq\mathbb{P}_{z}[\tau_{B(z,r)}\leq t]
	\leq c\exp(-\Phi(\gamma r,t))$,
which together with \eqref{eq:PUtxA-expansion-n-step10} yields
\eqref{eq:PUtxA-expansion-n-step9}.

Now \eqref{eq:HKUB-localized-off-diag-local} with
$c'_{\varepsilon}:=c^{2}c_{F}c_{\varepsilon,2}/(2^{\alpha_{F}/2}-1)$ is
immediate from \eqref{eq:PUtxA-expansion}, \eqref{eq:PUtxA-expansion-n-step8},
\eqref{eq:PUtxA-expansion-n-step9} and the fact that
$d(x,y)\leq d(x,y_{0})+d(y_{0},y)<4r/\varepsilon+r<5r/\varepsilon$
for any $y\in A$ by $A\subset B(y_{0},r)$.

Finally, we prove \eqref{eq:HKUB-localized-off-diag-local} for general
$A\in\sigmafield{B}(U^{\circ}_{\varepsilon R})$. Note that
\eqref{eq:HKUB-localized-off-diag-local} holds for $A=\{x\}$ by
\textup{(DU)$_{F}^{U,R}$}. In particular, \eqref{eq:HKUB-localized-off-diag-local}
is valid for any $A\in\sigmafield{B}(U^{\circ}_{\varepsilon R})$ if
$U^{\circ}_{\varepsilon R}\setminus\{x\}=\emptyset$, and thus we may assume
$U^{\circ}_{\varepsilon R}\setminus\{x\}\not=\emptyset$. Let $\{y_{k}\}_{k\in\mathbb{N}}$
be a countable dense subset of $U^{\circ}_{\varepsilon R}\setminus\{x\}$
and set $B_{0}:=U^{\circ}_{\varepsilon R}\cap\{x\}$,
$B_{1}:=B(y_{1},\frac{1}{4}\varepsilon d(x,y_{1}))$ and
$B_{k}:=B(y_{k},\frac{1}{4}\varepsilon d(x,y_{k}))\setminus\bigcup_{j=1}^{k-1}B(y_{j},\frac{1}{4}\varepsilon d(x,y_{j}))$
for $k\in\mathbb{N}\setminus\{1\}$. Then
$\{B_{k}\}_{k\in\mathbb{N}\cup\{0\}}\subset\sigmafield{B}(U)$, and it is easy to
see that $U^{\circ}_{\varepsilon R}\subset\bigcup_{k\in\mathbb{N}\cup\{0\}}B_{k}$,
where the union is disjoint. Now for any $A\in\sigmafield{B}(U^{\circ}_{\varepsilon R})$,
since $A\cap B_{0}\in\{\emptyset,\{x\}\}$ and
$A\cap B_{k}\in\sigmafield{B}\bigl(B(y_{k},\frac{1}{4}\varepsilon d(x,y_{k}))\bigr)$
for $k\in\mathbb{N}$, we have already proved \eqref{eq:HKUB-localized-off-diag-local}
with $A\cap B_{k}$ in place of $A$ for any $k\in\mathbb{N}\cup\{0\}$, and therefore
\begin{align}
\functionspace{P}^{U}_{t}(x,A)
	&=\functionspace{P}^{U}_{t}\Bigl(x,\bigcup\nolimits_{k\in\mathbb{N}\cup\{0\}}(A\cap B_{k})\Bigr)
	=\sum_{k\in\mathbb{N}\cup\{0\}}\functionspace{P}^{U}_{t}(x,A\cap B_{k})\notag\\
&\leq\sum_{k\in\mathbb{N}\cup\{0\}}\int_{A\cap B_{k}}c'_{\varepsilon}F_{t}(x,y)\exp\bigl(-\Phi(\gamma_{\varepsilon}d(x,y),t)\bigr)\,d\mu(y)\notag\\
&=\int_{A}c'_{\varepsilon}F_{t}(x,y)\exp\bigl(-\Phi(\gamma_{\varepsilon}d(x,y),t)\bigr)\,d\mu(y)
\label{eq:HKUB-localized-off-diag-local-end}
\end{align}
by monotone convergence, completing the proof of Proposition \ref{prop:HKUB-localized}.
\end{proof}

Theorems \ref{thm:HKUB-localized} and \ref{thm:HKUB-localized-global}
are easy consequences of Propositions \ref{prop:HK-existence},
\ref{prop:HKUB-localized}, and \ref{prop:HKUB-localized-difference} below.

\begin{proposition}\label{prop:HKUB-localized-difference}
Under the same assumptions as those of Theorem \textup{\ref{thm:HKUB-localized}},
there exists $c''_{\varepsilon}\in(0,\infty)$ explicit in
$c_{\Psi},\beta_{1},\beta_{2},c_{F},\alpha_{F},c,\gamma,\varepsilon$ such that, with
$\gamma_{\varepsilon}:=\frac{1}{5}\varepsilon\gamma$,
for any $(t,x)\in(0,\Psi(R))\times(M\setminus N)$ and
any $A\in\sigmafield{B}(U^{\circ}_{\varepsilon R})$,
\begin{equation}\label{eq:HKUB-localized-difference}
\functionspace{P}_{t}(x,A)\leq\functionspace{P}^{U}_{t}(x,A)
	+c''_{\varepsilon}(\inf\nolimits_{U\times U}F_{t})
	\exp(-\Phi(\gamma_{\varepsilon}R,t))\mu(A).
\end{equation}
\end{proposition}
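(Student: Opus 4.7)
The plan is to apply the multiple Dynkin-Hunt formula (Theorem \ref{thm:multiple-DH}) with the choice $B := U^\circ_{\varepsilon R}$. The required inclusion $\overline{B} \subset U$ is immediate since every $y \in \overline{B}$ is a limit of points at distance exceeding $\varepsilon R$ from $M \setminus U$, so $d(y, M \setminus U) \geq \varepsilon R > 0$; and for $A \in \sigmafield{B}(U^\circ_{\varepsilon R})$ the function $u := \ind{A}$ vanishes outside $B$. Theorem \ref{thm:multiple-DH} then yields
\begin{equation*}
\functionspace{P}_t(x, A) - \functionspace{P}^U_t(x, A)
	= \sum_{n \in \mathbb{N}} \mathbb{E}_x\bigl[\ind{\{\sigma_n \leq t\}} \functionspace{P}^U_{t-\sigma_n}(X_{\sigma_n}, A)\bigr]
\end{equation*}
with $\tau_n, \sigma_n$ as in \eqref{eq:multiple-DH-stopping-times}, so it suffices to bound this series by $c''_\varepsilon (\inf_{U \times U} F_t) \exp(-\Phi(\gamma_\varepsilon R, t)) \mu(A)$.

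For each $n$, the continuity \eqref{eq:X-continuous} of paths from $x \in M \setminus N$, the properly exceptional property \eqref{eq:properly-exceptional}, and the right-continuity of $X$ at $\sigma_n$ together force $X_{\sigma_n} \in \overline{B} \setminus N \subset U \setminus N$, $\mathbb{P}_x$-a.s.\ on $\{\sigma_n \leq t\}$. I will then apply Proposition \ref{prop:HKUB-localized} at $(t - \sigma_n, X_{\sigma_n}, A)$, combined with the doubling hypothesis \textup{(DB)$_\Psi$} used with arbitrary reference points in $U$ (exploiting $\diam U \leq R$ and $t < \Psi(R)$), to obtain a bound of the form
\begin{equation*}
\functionspace{P}^U_{t-\sigma_n}(X_{\sigma_n}, A)
	\leq c'_\varepsilon c_F \bigl(\Psi(R)/(t - \sigma_n)\bigr)^{\alpha_F}
		\bigl(\inf\nolimits_{U\times U} F_t\bigr) \mu(A),
\end{equation*}
keeping the Proposition \ref{prop:HKUB-localized} weight $\exp(-\Phi(\gamma_\varepsilon d(X_{\sigma_n}, y), t - \sigma_n))$ available for use where it helps.

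The target decay $\exp(-\Phi(\gamma_\varepsilon R, t))$ will be extracted from the probabilistic content of \textup{(P)$_\Psi^{U,R}$} through the following key observation: for any $z \in \overline{B} \setminus N$ we have $d(z, M \setminus U) \geq \varepsilon R$, so $B(z, \varepsilon R) \subset U$ and $\tau_{B(z, \varepsilon R)} \leq \tau_U \leq \sigma_1$ under $\mathbb{P}_z$, whence \textup{(P)$_\Psi^{U,R}$} gives
\begin{equation*}
\mathbb{P}_z[\sigma_1 \leq s] \leq \mathbb{P}_z[\tau_{B(z, \varepsilon R)} \leq s]
	\leq c \exp\bigl(-\Phi(\gamma \varepsilon R, s)\bigr).
\end{equation*}
Noting that $\sigma_{n+1} = \sigma_n + \sigma_1 \circ \theta_{\sigma_n}$ and iterating the strong Markov property at $\sigma_1, \sigma_2, \ldots$ with $X_{\sigma_n} \in \overline{B} \setminus N$ a.s., I obtain tail estimates for $\sigma_n$ that decay geometrically in $n$.

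The main obstacle will be combining these ingredients so that simultaneously (a) the polynomial blow-up $(t - \sigma_n)^{-\alpha_F}$ is absorbed into the exponential from \textup{(P)$_\Psi^{U,R}$}, and (b) the resulting series over $n$ converges to a quantity of order $\exp(-\Phi(\gamma_\varepsilon R, t))$. For (a) I plan to use the sub-Gaussian lower bound \eqref{eq:upper-bound-function-Phi-ULE} for $\Phi$ in the same manner as in the derivation of \eqref{eq:PUtxA-expansion-n-step6}, showing that $a^{\alpha_F} \exp(-\Phi(\cdot, \cdot))$ is dominated by a constant multiple of $\exp(-c\Phi(\cdot, \cdot))$. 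For (b), the point is that on $\{\sigma_n \leq t\}$ some increment $\sigma_k - \sigma_{k-1}$ is at most $t/n$, so the strong Markov property at $\sigma_{k-1}$ together with the above exit estimate bounds the corresponding probability by $c \exp(-\Phi(\gamma \varepsilon R, t/n))$, a super-geometric decay in $n$ that sums, after absorbing (a), to the desired $c''_\varepsilon (\inf_{U \times U} F_t) \exp(-\Phi(\gamma_\varepsilon R, t)) \mu(A)$.
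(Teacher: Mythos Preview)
Your overall strategy---multiple Dynkin--Hunt formula, Proposition~\ref{prop:HKUB-localized} at the re-entry points, pigeonhole on the increments $\sigma_{k+1}-\sigma_k$ for the series in $n$---matches the paper's. But your choice $B := U^\circ_{\varepsilon R}$ creates a genuine obstacle at step~(a) that the paper avoids by taking instead $B := U^\circ_{(\varepsilon/2)R}$.

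The problem is this. With your $B$, path continuity forces $X_{\sigma_n}\in\partial U^\circ_{\varepsilon R}$, i.e.\ $d(X_{\sigma_n},M\setminus U)=\varepsilon R$. But $A\in\sigmafield{B}(U^\circ_{\varepsilon R})$ may contain points $y$ with $d(y,M\setminus U)$ only marginally larger than $\varepsilon R$, and hence with $d(X_{\sigma_n},y)$ arbitrarily small. On that part of $A$ the weight $\exp\bigl(-\Phi(\gamma_\varepsilon d(X_{\sigma_n},y),\,t-\sigma_n)\bigr)$ from Proposition~\ref{prop:HKUB-localized} provides no decay as $s:=t-\sigma_n\downarrow 0$, leaving you with the raw expectation $\mathbb{E}_x\bigl[\ind{\{\sigma_n\leq t\}}(\Psi(R)/(t-\sigma_n))^{\alpha_F}\bigr]$. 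The exponential you extract from \textup{(P)$_\Psi^{U,R}$} bounds $\mathbb{P}_x[\sigma_n\leq t]$ and the increment lengths, but carries no information about $t-\sigma_n$; the trick of \eqref{eq:PUtxA-expansion-n-step6} does not transfer, because there the polynomial and the absorbing exponential share the same time variable $s$, whereas here they do not.

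The paper's remedy is to take $B=U^\circ_{(\varepsilon/2)R}$, strictly larger than the region containing $A$. Then $X_{\sigma_n}\in\partial U^\circ_{(\varepsilon/2)R}$ satisfies $d(X_{\sigma_n},M\setminus U)=\tfrac{1}{2}\varepsilon R$, while every $y\in A\subset U^\circ_{\varepsilon R}$ has $d(y,M\setminus U)>\varepsilon R$; the triangle inequality then forces $d(X_{\sigma_n},y)>\tfrac{1}{2}\varepsilon R$. This uniform gap gives
\[
\exp\bigl(-\Phi(\gamma_\varepsilon d(X_{\sigma_n},y),s)\bigr)
	\leq\exp\bigl(-\Phi(\tfrac{1}{2}\varepsilon\gamma_\varepsilon R,s)\bigr),
\]
which \emph{does} depend on $s$ and absorbs $(\Psi(R)/s)^{\alpha_F}$ uniformly in $s\in(0,t]$, yielding a bound on $\functionspace{P}^U_s(X_{\sigma_n},A)$ that is uniform in $s$ (the paper's \eqref{eq:multiple-DH-apply-remainder-proof1}). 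A secondary point: your displayed estimate $\mathbb{P}_z[\sigma_1\leq s]\leq c\exp(-\Phi(\gamma\varepsilon R,s))$ requires $z\in\overline{B}$, so it does not directly cover the $n=1$ term for a general starting point $x\in M\setminus N$; the paper introduces a further intermediate layer $B'=U^\circ_{(\varepsilon/4)R}$ to handle this.
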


\begin{proof}
If $U=M$, then \eqref{eq:HKUB-localized-difference} is trivially valid since
$X_{t}=X^{U}_{t}$ and hence $\functionspace{P}_{t}=\functionspace{P}^{U}_{t}$
for any $t\in[0,\infty)$. Therefore we may assume $U\not=M$.
Set $B:=U^{\circ}_{(\varepsilon/2)R}$, so that $B$ is open in $M$ and
$\overline{B}\subset U$, and define $\sigmafield{F}_{*}$-stopping times $\tau_{n}$
and $\sigma_{n}$, $n\in\mathbb{N}$, by \eqref{eq:multiple-DH-stopping-times}.
For each $n\in\mathbb{N}$, as noted at the beginning of the proof of
Theorem \ref{thm:multiple-DH}, on $\{\sigma_{n}<\infty\}$ we have
$X_{\sigma_{n}}\in\overline{B}\subset U$, $\tau_{n}\leq\sigma_{n}<\zeta$,
hence $X_{\tau_{n}}\in M\setminus U$ and $\tau_{n}<\sigma_{n}$
by the sample path right-continuity of $X$, and we also easily see that
\begin{equation}\label{eq:multiple-DH-sigma-n-boundary}
X_{\sigma_{n}}\in(\partial B)\setminus N\quad\textrm{on}\quad
	\{\sigma_{n}<\infty=\dot{\sigma}_{N},\,[0,\zeta)\ni t\mapsto X_{t}\in M\textrm{ is continuous}\}.
\end{equation}

Let $(t,x)\in(0,\Psi(R))\times(M\setminus N)$ and
$A\in\sigmafield{B}(U^{\circ}_{\varepsilon R})$. Since
$\ind{A}|_{M\setminus B}=0$ by $A\subset U^{\circ}_{\varepsilon R}\subset B$,
from Theorem \ref{thm:multiple-DH} with $u=\ind{A}$ we obtain
\begin{equation}\label{eq:multiple-DH-apply}
\functionspace{P}_{t}(x,A)=\functionspace{P}^{U}_{t}(x,A)
	+\sum_{n\in\mathbb{N}}\mathbb{E}_{x}\bigl[\ind{\{\sigma_{n}\leq t\}}
		\functionspace{P}^{U}_{t-\sigma_{n}}(X_{\sigma_{n}},A)\bigr].
\end{equation}

Noting \eqref{eq:multiple-DH-sigma-n-boundary}, to estimate each term of the series
in \eqref{eq:multiple-DH-apply} let $s\in[0,t]$, $z\in(\partial B)\setminus N$ and
let $c'_{\varepsilon}\in(0,\infty)$ and $\gamma_{\varepsilon}=\frac{1}{5}\varepsilon\gamma$
be as in Proposition \ref{prop:HKUB-localized}. We claim that
\begin{equation}\label{eq:multiple-DH-apply-remainder-proof1}
\functionspace{P}^{U}_{s}(z,A)\leq c'_{\varepsilon}c_{F}c_{\varepsilon,3}
	(\inf\nolimits_{U\times U}F_{t})\mu(A)
\end{equation}
for some $c_{\varepsilon,3}\in(0,\infty)$ explicit in
$c_{\Psi},\beta_{1},\beta_{2},\alpha_{F},\gamma,\varepsilon$.
Indeed, \eqref{eq:multiple-DH-apply-remainder-proof1} trivially holds for $s=0$
since $\functionspace{P}^{U}_{0}(z,A)=\mathbb{P}_{z}[X_{0}\in A,\,0<\tau_{U}]=0$
by $\mathbb{P}_{z}[X_{0}=z]=1$ and $z\not\in B\supset A$, and thus we may assume
$s\in(0,t]$. Then $s\in(0,\Psi(R))$, $z\in U\setminus N$ by $\overline{B}\subset U$,
and hence an application of Proposition \ref{prop:HKUB-localized} yields
\eqref{eq:HKUB-localized-off-diag-local} with $(s,z)$ in place of $(t,x)$.
Let $y\in U^{\circ}_{\varepsilon R}$ and $x_{0},y_{0}\in U$.
By \textup{(DB)$_{\Psi}$}, $0<s\leq t<\Psi(R)$ and $\diam U\leq R$ we have
$F_{s}(z,y)\leq c_{F}(\Psi(R)/s)^{\alpha_{F}}F_{t}(x_{0},y_{0})$,
and furthermore we easily see from
$z\in\partial B=\partial U^{\circ}_{(\varepsilon/2)R}$ that
$d(z,y)>\frac{1}{2}\varepsilon R$, so that
$\exp\bigl(-\Phi(\gamma_{\varepsilon}d(z,y),s)\bigr)
	\leq\exp\bigl(-\Phi(\frac{1}{2}\varepsilon\gamma_{\varepsilon}R,s)\bigr)$
by the monotonicity of $\Phi(\cdot,s)$.
These facts, \eqref{eq:upper-bound-function-Phi-ULE}
and \eqref{eq:upper-bound-function-Psi} together imply that
\begin{align*}
&F_{s}(z,y)\exp\bigl(-\Phi(\gamma_{\varepsilon}d(z,y),s)\bigr)\\
&\leq c_{F}\Bigl(\frac{\Psi(R)}{s}\Bigr)^{\alpha_{F}}F_{t}(x_{0},y_{0})
	\exp\bigl(-\Phi({\textstyle\frac{1}{2}}\varepsilon\gamma_{\varepsilon}R,s)\bigr)\\
&\leq c_{F}\Bigl(\frac{\Psi(R)}{s}\Bigr)^{\alpha_{F}}F_{t}(x_{0},y_{0})
	\exp\biggl(-(c_{\Psi}2^{\beta_{1}})^{-\frac{1}{\beta_{1}-1}}
		\min_{k\in\{1,2\}}\Bigl(\frac{\Psi(\frac{1}{2}\varepsilon\gamma_{\varepsilon}R)}{s}\Bigr)^{\frac{1}{\beta_{k}-1}}\biggr)\\
&\leq c_{F}c_{\varepsilon,3}F_{t}(x_{0},y_{0}),
\end{align*}
and taking the infimum in $(x_{0},y_{0})\in U\times U$ shows that for any $y\in U^{\circ}_{\varepsilon R}$,
\begin{equation}\label{eq:multiple-DH-apply-remainder-proof2}
F_{s}(z,y)\exp\bigl(-\Phi(\gamma_{\varepsilon}d(z,y),s)\bigr)
	\leq c_{F}c_{\varepsilon,3}(\inf\nolimits_{U\times U}F_{t}).
\end{equation}
Then \eqref{eq:multiple-DH-apply-remainder-proof1} is immediate from
\eqref{eq:HKUB-localized-off-diag-local} with $(s,z)$ in place of $(t,x)$,
$A\subset U^{\circ}_{\varepsilon R}$ and \eqref{eq:multiple-DH-apply-remainder-proof2}.

Let $n\in\mathbb{N}$. By \eqref{eq:properly-exceptional},
\eqref{eq:X-continuous}, \eqref{eq:multiple-DH-sigma-n-boundary} and
\eqref{eq:multiple-DH-apply-remainder-proof1},
\begin{align}
\mathbb{E}_{x}\bigl[\ind{\{\sigma_{n}\leq t\}}\functionspace{P}^{U}_{t-\sigma_{n}}(X_{\sigma_{n}},A)\bigr]
	&=\mathbb{E}_{x}\bigl[\ind{\{\sigma_{n}\leq t,\,X_{\sigma_{n}}\in(\partial B)\setminus N\}}\functionspace{P}^{U}_{t-\sigma_{n}}(X_{\sigma_{n}},A)\bigr]\notag\\
&\leq c'_{\varepsilon}c_{F}c_{\varepsilon,3}(\inf\nolimits_{U\times U}F_{t})\mu(A)
	\mathbb{P}_{x}[\sigma_{n}\leq t],
\label{eq:multiple-DH-apply-remainder-proof3}
\end{align}
and we need to estimate $\mathbb{P}_{x}[\sigma_{n}\leq t]$. Recall that
$\tau_{n}\leq\sigma_{n}\leq\tau_{n+1}=\sigma_{n}+\tau_{U}\circ\theta_{\sigma_{n}}$
as mentioned in the proof of Theorem \ref{thm:multiple-DH}. Assume $n\geq 2$.
For each $\omega\in\{\sigma_{n}\leq t\}$, since
$0\leq\sigma_{k}(\omega)\leq\sigma_{n}(\omega)\leq t$
for any $k\in\{1,\dots,n\}$, we have
$t\geq\sigma_{n}(\omega)\geq\sigma_{n}(\omega)-\sigma_{1}(\omega)
	=\sum_{k=1}^{n-1}(\sigma_{k+1}(\omega)-\sigma_{k}(\omega))$
and therefore
$\tau_{U}\circ\theta_{\sigma_{k}}(\omega)=\tau_{k+1}(\omega)-\sigma_{k}(\omega)
	\leq\sigma_{k+1}(\omega)-\sigma_{k}(\omega)\leq\frac{t}{n-1}$
for some $k\in\{1,\dots,n-1\}$. Thus
$\{\sigma_{n}\leq t\}\subset\bigcup_{k=1}^{n-1}\{\sigma_{k}\leq t,\,\tau_{U}\circ\theta_{\sigma_{k}}\leq\frac{t}{n-1}\}$
and hence
\begin{equation}\label{eq:multiple-DH-apply-remainder-proof4}
\mathbb{P}_{x}[\sigma_{n}\leq t]
	\leq\mathbb{P}_{x}\Biggl[\bigcup_{k=1}^{n-1}\{\sigma_{k}\leq t,\,\tau_{U}\circ\theta_{\sigma_{k}}\leq{\textstyle\frac{t}{n-1}}\}\Biggr]
	\leq\sum_{k=1}^{n-1}\mathbb{P}_{x}[\sigma_{k}\leq t,\,\tau_{U}\circ\theta_{\sigma_{k}}\leq{\textstyle\frac{t}{n-1}}].
\end{equation}
Furthermore by using first the strong Markov property \cite[Theorem A.1.21]{CF}
of $X$ at time $\sigma_{k}$ and then \eqref{eq:properly-exceptional},
\eqref{eq:X-continuous} and \eqref{eq:multiple-DH-sigma-n-boundary}
we see that for any $k\in\{1,\dots,n-1\}$,
\begin{align}
\mathbb{P}_{x}[\sigma_{k}\leq t,\,\tau_{U}\circ\theta_{\sigma_{k}}\leq{\textstyle\frac{t}{n-1}}]
	&=\mathbb{E}_{x}[\ind{\{\sigma_{k}\leq t\}}(\ind{\{\tau_{U}\leq t/(n-1)\}}\circ\theta_{\sigma_{k}})]\notag\\
&=\mathbb{E}_{x}\bigl[\ind{\{\sigma_{k}\leq t\}}\mathbb{P}_{X_{\sigma_{k}}}[\tau_{U}\leq{\textstyle\frac{t}{n-1}}]\bigr]\notag\\
&=\mathbb{E}_{x}\bigl[\ind{\{\sigma_{k}\leq t,\,X_{\sigma_{k}}\in(\partial B)\setminus N\}}\mathbb{P}_{X_{\sigma_{k}}}[\tau_{U}\leq{\textstyle\frac{t}{n-1}}]\bigr]\notag\\
&\leq c\exp\bigl(-\Phi({\textstyle\frac{1}{2}}\varepsilon\gamma R,{\textstyle\frac{t}{n-1}})\bigr);
\label{eq:multiple-DH-apply-remainder-proof5}
\end{align}
here the last inequality follows from the fact that for any
$z\in(\partial B)\setminus N$, $\tau_{B(z,\varepsilon R/2)}\leq\tau_{U}$
by $B(z,\frac{1}{2}\varepsilon R)\subset U$ and hence
$\mathbb{P}_{z}[\tau_{U}\leq{\textstyle\frac{t}{n-1}}]
	\leq\mathbb{P}_{z}[\tau_{B(z,\varepsilon R/2)}\leq{\textstyle\frac{t}{n-1}}]
	\leq c\exp\bigl(-\Phi({\textstyle\frac{1}{2}}\varepsilon\gamma R,{\textstyle\frac{t}{n-1}})\bigr)$
by \textup{(P)$_{\Psi}^{U,R}$} for $(z,\frac{1}{2}\varepsilon R)$.
Also, by the monotonicity of $\Phi(\cdot,\frac{t}{n-1})$ and
$\Phi(\gamma_{\varepsilon}R,\cdot)$, \eqref{eq:upper-bound-function-Phi-constant},
\eqref{eq:upper-bound-function-Phi-ULE}, \eqref{eq:upper-bound-function-Psi},
$1<\beta_{1}\leq\beta_{2}$ and $t<\Psi(R)$,
for some $c_{\varepsilon,4}\in(0,\infty)$ explicit in
$c_{\Psi},\beta_{1},\beta_{2},\gamma_{\varepsilon}$,
\begin{align}
\Phi({\textstyle\frac{1}{2}}\varepsilon\gamma R,{\textstyle\frac{t}{n-1}})
	&\geq\Phi(2\gamma_{\varepsilon}R,{\textstyle\frac{t}{n-1}})
	\geq 2\Phi(\gamma_{\varepsilon}R,{\textstyle\frac{t}{n-1}})\notag\\
&\geq\Phi(\gamma_{\varepsilon}R,t)+(c_{\Psi}2^{\beta_{1}})^{-\frac{1}{\beta_{1}-1}}
	\min_{k\in\{1,2\}}\Bigl(\frac{\Psi(\gamma_{\varepsilon}R)}{t/(n-1)}\Bigr)^{\frac{1}{\beta_{k}-1}}\notag\\
&\geq\Phi(\gamma_{\varepsilon}R,t)+c_{\varepsilon,4}(n-1)^{\frac{1}{\beta_{2}-1}}.
\label{eq:multiple-DH-apply-remainder-proof6}
\end{align}
From \eqref{eq:multiple-DH-apply-remainder-proof4},
\eqref{eq:multiple-DH-apply-remainder-proof5} and
\eqref{eq:multiple-DH-apply-remainder-proof6} we conclude that
for any $n\in\mathbb{N}\setminus\{1\}$,
\begin{align}
\mathbb{P}_{x}[\sigma_{n}\leq t]
	&\leq c(n-1)e^{-c_{\varepsilon,4}(n-1)^{\frac{1}{\beta_{2}-1}}}\exp(-\Phi(\gamma_{\varepsilon}R,t))\notag\\
&\leq cc_{\varepsilon,5}(n-1)^{-2}\exp(-\Phi(\gamma_{\varepsilon}R,t)),
\label{eq:multiple-DH-apply-remainder-proof7}
\end{align}
where $c_{\varepsilon,5}:=\bigl(3(\beta_{2}-1)/(ec_{\varepsilon,4})\bigr)^{3(\beta_{2}-1)}$.
For $\mathbb{P}_{x}[\sigma_{1}\leq t]$, set $B':=U^{\circ}_{(\varepsilon/4)R}$
and $\sigma:=\dot{\sigma}_{B',\tau_{U}}=\dot{\sigma}_{B',\tau_{1}}$
(recall Definition \ref{dfn:entrance-exit-after-sigma}), so that we have
\eqref{eq:multiple-DH-sigma-n-boundary} with $B'$ and $\sigma$ in place of $B$ and
$\sigma_{n}$, respectively, by substituting $\frac{1}{2}\varepsilon$ for $\varepsilon$.
Noting that $\sigma_{1}=\sigma+\dot{\sigma}_{B}\circ\theta_{\sigma}$ by
$B\subset B'$ and thus that
$\{\sigma_{1}\leq t\}\subset\{\sigma\leq t,\,\dot{\sigma}_{B}\circ\theta_{\sigma}\leq t\}$,
from the strong Markov property \cite[Theorem A.1.21]{CF} of
$X$ at time $\sigma$, \eqref{eq:properly-exceptional}, \eqref{eq:X-continuous}
and \eqref{eq:multiple-DH-sigma-n-boundary} we obtain
\begin{align}
\mathbb{P}_{x}[\sigma_{1}\leq t]
	\leq\mathbb{P}_{x}[\sigma\leq t,\,\dot{\sigma}_{B}\circ\theta_{\sigma}\leq t]
	&=\mathbb{E}_{x}[\ind{\{\sigma\leq t\}}(\ind{\{\dot{\sigma}_{B}\leq t\}}\circ\theta_{\sigma})]\notag\\
&=\mathbb{E}_{x}\bigl[\ind{\{\sigma\leq t\}}\mathbb{P}_{X_{\sigma}}[\dot{\sigma}_{B}\leq t]\bigr]\notag\\
&=\mathbb{E}_{x}\bigl[\ind{\{\sigma\leq t,\,X_{\sigma}\in(\partial B')\setminus N\}}\mathbb{P}_{X_{\sigma}}[\dot{\sigma}_{B}\leq t]\bigr]\notag\\
&\leq c\exp(-\Phi(\gamma_{\varepsilon}R,t));
\label{eq:multiple-DH-apply-remainder-proof8}
\end{align}
here, similarly to \eqref{eq:multiple-DH-apply-remainder-proof5},
the last inequality holds since for any
$z\in(\partial B')\setminus N$, $\tau_{B(z,\varepsilon R/4)}\leq\dot{\sigma}_{B}$
by $B(z,\frac{1}{4}\varepsilon R)\subset U\setminus B$ and hence
$\mathbb{P}_{z}[\dot{\sigma}_{B}\leq t]
	\leq\mathbb{P}_{z}[\tau_{B(z,\varepsilon R/4)}\leq t]
	\leq c\exp(-\Phi(\gamma_{\varepsilon}R,t))$
by \textup{(P)$_{\Psi}^{U,R}$} for $(z,\frac{1}{4}\varepsilon R)$ and
the monotonicity of $\Phi(\cdot,t)$.

Now \eqref{eq:HKUB-localized-difference} with
$c''_{\varepsilon}:=cc'_{\varepsilon}c_{F}c_{\varepsilon,3}(2c_{\varepsilon,5}+1)$
is immediate from \eqref{eq:multiple-DH-apply},
\eqref{eq:multiple-DH-apply-remainder-proof3},
\eqref{eq:multiple-DH-apply-remainder-proof7} and
\eqref{eq:multiple-DH-apply-remainder-proof8},
completing the proof of Proposition \ref{prop:HKUB-localized-difference}.
\end{proof}

\begin{proof}[Proof of Theorem \textup{\ref{thm:HKUB-localized}}]
Let $c'_{\varepsilon},c''_{\varepsilon}\in(0,\infty)$ be as in
Propositions \ref{prop:HKUB-localized} and \ref{prop:HKUB-localized-difference},
respectively, and let $\gamma_{\varepsilon}:=\frac{1}{5}\varepsilon\gamma$.
We show that Theorem \ref{thm:HKUB-localized} can be concluded from
Proposition \ref{prop:HK-existence} applied to $I=(0,\infty)$,
$V=M\setminus N$, $W=U^{\circ}_{\varepsilon R}$, $M$ in place of $U$, and
$H=H_{t}(x,y):(0,\infty)\times(M\setminus N)\times U^{\circ}_{\varepsilon R}\to[0,\infty)$
given by
\begin{equation}\label{eq:HKUB-localized-dfn-H}
H_{t}(x,y):=
	\begin{cases}
	(c'_{\varepsilon}+c''_{\varepsilon})F_{t}(x,y)\exp\bigl(-\Phi(\gamma_{\varepsilon}d(x,y),t)\bigr)
		&\textrm{if }t<\Psi(R)\textrm{ and }x\in U,\\
	c''_{\varepsilon}c_{F}2^{\alpha_{F}}(\inf_{U\times U}F_{\Psi(R)/2^{n}})\exp(-\Phi(\gamma_{\varepsilon}R,t))
		&\textrm{if }\frac{\Psi(R)}{2^{n+1}}\leq t<\frac{\Psi(R)}{2^{n}}\textrm{ and }x\not\in U,\\
	c_{\varepsilon}(\inf_{U\times U}F_{\Psi(R)}) &\textrm{if }t\geq\Psi(R),
	\end{cases}
\end{equation}
where $n\in\mathbb{N}\cup\{0\}$ in the second line and
$c_{\varepsilon}:=((c'_{\varepsilon}+c''_{\varepsilon})c_{F}2^{\alpha_{F}})
	\vee(c''_{\varepsilon}c_{F}^{2}2^{2\alpha_{F}})$.
Obviously $H=H_{t}(x,y)$ is Borel measurable, and by using \textup{(DB)$_{\Psi}$}
it is easily seen to be less than or equal to the right-hand side of
\eqref{eq:HKUB-localized-off-diag-U}, so that it remains to verify that
\begin{equation}\label{eq:HK-existence-global}
\functionspace{P}_{t}(x,A)\leq\int_{A}H_{t}(x,y)\,d\mu(y)
\end{equation}
for any $(t,x)\in(0,\infty)\times(M\setminus N)$ and any
$A\in\sigmafield{B}(U^{\circ}_{\varepsilon R})$. Note that by
\textup{(DB)$_{\Psi}$} and $\diam U\leq R$ we also have
\begin{equation}\label{eq:HKUB-localized-proof}
H_{\Psi(R)/2}(z,y)\leq c_{\varepsilon}(\inf\nolimits_{U\times U}F_{\Psi(R)})\qquad
	\textrm{for any }(z,y)\in(M\setminus N)\times U^{\circ}_{\varepsilon R}.
\end{equation}

Let $(t,x)\in(0,\infty)\times(M\setminus N)$ and $A\in\sigmafield{B}(U^{\circ}_{\varepsilon R})$.
If $t<\Psi(R)$ and $x\in U$, then \eqref{eq:HK-existence-global} easily follows from
Propositions \ref{prop:HKUB-localized} and \ref{prop:HKUB-localized-difference}
in view of the fact that $\Phi(\gamma_{\varepsilon}R,t)\geq\Phi(\gamma_{\varepsilon}d(x,y),t)$
for any $y\in U^{\circ}_{\varepsilon R}$ by $\diam U\leq R$ and the monotonicity of
$\Phi(\cdot,t)$. If $t<\Psi(R)$ and $x\not\in U$, then we see from \textup{(DB)$_{\Psi}$} that
$c''_{\varepsilon}(\inf\nolimits_{U\times U}F_{t})\exp(-\Phi(\gamma_{\varepsilon}R,t))\leq H_{t}(x,y)$
for any $y\in U^{\circ}_{\varepsilon R}$, which together with
Proposition \ref{prop:HKUB-localized-difference} and
$\functionspace{P}^{U}_{t}(x,A)=0$ immediately implies \eqref{eq:HK-existence-global}.

Now assume $t\geq\Psi(R)$. Since
$\functionspace{P}_{t-\Psi(R)/2}(x,N)=\mathbb{P}_{x}[X_{t-\Psi(R)/2}\in N]=0$
by \eqref{eq:properly-exceptional} and
\begin{equation*}
\functionspace{P}_{\Psi(R)/2}(z,A)\leq\int_{A}H_{\Psi(R)/2}(z,y)\,d\mu(y)\leq
	\int_{A}H_{t}(x,y)\,d\mu(y)
\end{equation*}
for any $z\in M\setminus N$ by the previous paragraph,
\eqref{eq:HKUB-localized-proof} and \eqref{eq:HKUB-localized-dfn-H},
from \eqref{eq:Pt-semigroup} we get
\begin{align*}
\functionspace{P}_{t}(x,A)
	=\functionspace{P}_{t-\Psi(R)/2}(\functionspace{P}_{\Psi(R)/2}\ind{A})(x)
	&=\int_{M\setminus N}\functionspace{P}_{\Psi(R)/2}(z,A)\functionspace{P}_{t-\Psi(R)/2}(x,dz)\\
&\leq\int_{M\setminus N}\biggl(\int_{A}H_{t}(x,y)\,d\mu(y)\biggr)\functionspace{P}_{t-\Psi(R)/2}(x,dz)\\
&\leq\int_{A}H_{t}(x,y)\,d\mu(y).
\end{align*}

Thus \eqref{eq:HK-existence-global} has been proved and hence
Theorem \ref{thm:HKUB-localized} follows from Proposition \ref{prop:HK-existence}.
\end{proof}

\begin{proof}[Proof of Theorem \textup{\ref{thm:HKUB-localized-global}}]
Define $H=H_{t}(x,y):(0,\infty)\times(M\setminus N)\times M\to[0,\infty)$
by the right-hand side of \eqref{eq:HKUB-localized-off-diag}, so that it is
clearly Borel measurable. Thanks to Proposition \ref{prop:HK-existence},
it suffices to show \eqref{eq:HK-existence-global} for any
$(t,x)\in(0,\infty)\times(M\setminus N)$ and any $A\in\sigmafield{B}(M)$
for some $c'\in(0,\infty)$ explicit in
$c_{\Psi},\beta_{1},\beta_{2},c_{F},\alpha_{F},c,\gamma$.
For applications of Theorem \ref{thm:HKUB-localized} and
Proposition \ref{prop:HKUB-localized-difference},
we remark that for any $(y_{0},R')\in M\times(0,\infty)$,
\begin{equation}\label{eq:U-ball}
\textrm{if we set}\quad U:=B(y_{0},{\textstyle\frac{R'}{2}})
	\quad\textrm{then}\quad\diam U\leq R'\quad\textrm{and}\quad
	B(y_{0},{\textstyle\frac{R'}{4}})\subset U^{\circ}_{(1/4)R'}.
\end{equation}

Let $(t,x)\in(0,\infty)\times(M\setminus N)$. If $R=\infty$, then for any
$A\in\sigmafield{B}(M)$ and any $n\in\mathbb{N}$ with $n>\Psi^{-1}(t)$,
in view of \eqref{eq:U-ball} we can apply Theorem \ref{thm:HKUB-localized} with
$n,B(x,\frac{n}{2}),\frac{1}{4}$ in place of $R,U,\varepsilon$ respectively
and $A\cap B(x,\frac{n}{4})$ in place of $A$ in
\eqref{eq:HKUB-localized-existence} and obtain
\begin{equation*}
\functionspace{P}_{t}(x,A\cap B(x,{\textstyle\frac{n}{4}}))
	\leq\int_{A\cap B(x,n/4)}H_{t}(x,y)\,d\mu(y)
\end{equation*}
with $c'=c_{1/4}$, which yields \eqref{eq:HK-existence-global}
by using monotone convergence to let $n\to\infty$.

Thus we may assume $R<\infty$.
Let $y_{0}\in M$ and $A\in\sigmafield{B}(B(y_{0},\frac{R}{4}))$.
We claim that \eqref{eq:HK-existence-global} holds for such $A$.
Indeed, setting $U:=B(y_{0},\frac{R}{2})$,
we have \eqref{eq:HK-existence-global} with $H_{t}(x,y)$ replaced by
\begin{equation}\label{eq:HKUB-localized-global-Htilde}
\widetilde{H}_{t}(x,y):=
	\begin{cases}
	c_{1/4}F_{t}(x,y)\exp\bigl(-\Phi(\frac{1}{20}\gamma d(x,y),t)\bigr) &\textrm{if }t<\Psi(R)\textrm{ and }x\in U,\\
	c''_{1/4}(\inf\nolimits_{U\times U}F_{t})\exp(-\Phi(\frac{1}{20}\gamma R,t))
		&\textrm{if }t<\Psi(R)\textrm{ and }x\not\in U,\\
	c_{1/4}(\inf_{U\times U}F_{\Psi(R)}) &\textrm{if }t\geq\Psi(R)
	\end{cases}
\end{equation}
since Theorem \ref{thm:HKUB-localized} and
Proposition \ref{prop:HKUB-localized-difference}
with $\varepsilon=\frac{1}{4}$ are applicable by \eqref{eq:U-ball}
and $\functionspace{P}^{U}_{t}(x,A)=0$ if $x\not\in U$.
Moreover, if $t\leq\Psi(R)$ then for any $y\in M$ and any $z,w\in U$,
\begin{align}\label{eq:HKUB-localized-global-Rfinite1}
\inf\nolimits_{U\times U}F_{t}\leq F_{t}(z,w)
	&\leq c_{F}\Bigl(\frac{t\vee\Psi(d(x,z))\vee\Psi(d(y,w))}{t}\Bigr)^{\alpha_{F}}F_{t}(x,y)\notag\\
&\leq c_{F}\Bigl(\frac{\Psi(\delta^{-1}R)}{t}\Bigr)^{\alpha_{F}}F_{t}(x,y)\notag\\
&\leq c_{F}c_{\Psi}^{\alpha_{F}}\delta^{-\beta_{2}\alpha_{F}}
	\Bigl(\frac{\Psi(R)}{t}\Bigr)^{\alpha_{F}}F_{t}(x,y)
\end{align}
by \textup{(DB)$_{\Psi}$}, $\diam M\leq\delta^{-1}R$ and
\eqref{eq:upper-bound-function-Psi} (even if $x\in N$), hence
\begin{equation}\label{eq:HKUB-localized-global-Rfinite2}
c_{1/4}(\inf\nolimits_{U\times U}F_{\Psi(R)})
	\leq c_{1/4}c_{F}c_{\Psi}^{\alpha_{F}}\delta^{-\beta_{2}\alpha_{F}}
		(\inf\nolimits_{M\times M}F_{\Psi(R)}),
\end{equation}
and we also easily see from \eqref{eq:HKUB-localized-global-Rfinite1},
\eqref{eq:upper-bound-function-Phi-constant},
\eqref{eq:upper-bound-function-Phi-ULE}, \eqref{eq:upper-bound-function-Psi}
and $R\geq\delta\diam M$ that
\begin{align}
&c''_{1/4}(\inf\nolimits_{U\times U}F_{t})\exp(-\Phi({\textstyle\frac{1}{20}}\gamma R,t))\notag\\
&\leq c''_{1/4}c_{F}c_{\Psi}^{\alpha_{F}}\delta^{-\beta_{2}\alpha_{F}}
	\Bigl(\frac{\Psi(R)}{t}\Bigr)^{\alpha_{F}}F_{t}(x,y)\exp(-2\Phi({\textstyle\frac{1}{40}}\gamma R,t))\notag\\
&\leq c''\delta^{-\beta_{2}\alpha_{F}}F_{t}(x,y)\exp\bigl(-\Phi(\gamma'_{\delta}d(x,y),t)\bigr)
\label{eq:HKUB-localized-global-Rfinite3}
\end{align}
for any $y\in M$ for some $c''\in(0,\infty)$ explicit in
$c_{\Psi},\beta_{1},\beta_{2},c_{F},\alpha_{F},c,\gamma$.
Therefore putting $c':=c''\vee(c_{1/4}c_{F}c_{\Psi}^{\alpha_{F}})$,
we have $\widetilde{H}_{t}(x,y)\leq H_{t}(x,y)$ for any $y\in M$
by \eqref{eq:HKUB-localized-global-Htilde},
\eqref{eq:HKUB-localized-global-Rfinite2} and
\eqref{eq:HKUB-localized-global-Rfinite3}, and thus the inequality
\eqref{eq:HK-existence-global} follows from that with
$\widetilde{H}_{t}(x,y)$ in place of $H_{t}(x,y)$.

Now let $\{y_{k}\}_{k\in\mathbb{N}}$ be a countable dense
subset of $M$ and set $B_{1}:=B(y_{1},\frac{R}{4})$ and
$B_{k}:=B(y_{k},\frac{R}{4})\setminus\bigcup_{j=1}^{k-1}B(y_{j},\frac{R}{4})$
for $k\in\mathbb{N}\setminus\{1\}$, so that
$\{B_{k}\}_{k\in\mathbb{N}}\subset\sigmafield{B}(M)$ and
$M=\bigcup_{k\in\mathbb{N}}B_{k}$, where the union is disjoint.
Then for any $A\in\sigmafield{B}(M)$, for each $k\in\mathbb{N}$ we have
\eqref{eq:HK-existence-global} with $A\cap B_{k}$ in place of $A$ by the
previous paragraph and $A\cap B_{k}\in\sigmafield{B}(B(y_{k},\frac{R}{4}))$,
from which \eqref{eq:HK-existence-global} follows
in exactly the same way as \eqref{eq:HKUB-localized-off-diag-local-end},
completing the proof of Theorem \ref{thm:HKUB-localized-global}.
\end{proof}

\section{Exit probability estimates for diffusions}\label{sec:exit-probability}

As already mentioned at the beginning of Section \ref{sec:HKUB},
the purpose of this section is to provide reasonable sufficient conditions
for the exit probability estimate \textup{(P)$_{\Psi}^{U,R}$} of
Theorem \ref{thm:HKUB-localized}. Recall that since Section \ref{sec:HKUB}
we have fixed $\Psi,c_{\Psi},\beta_{1},\beta_{2}$ and
$\Phi=\Phi_{\Psi}$ as in Lemma \ref{lem:upper-bound-function-Psi}.

\emph{In this section, we fix an arbitrary properly exceptional set
$N\in\sigmafield{B}(M)$ for $X$ satisfying
\begin{equation}\label{eq:X-continuous-no-killing-inside}
\textrm{both}\quad\eqref{eq:X-continuous}\quad\textrm{and}\quad
	\mathbb{P}_{x}[\zeta<\infty,\,X_{\zeta-}\in M]=0
\end{equation}
for any $x\in M\setminus N$,}
where $X_{\zeta-}(\omega):=X_{\zeta(\omega)-}(\omega)$
($X_{0-}:=X_{0}$, $X_{\infty-}:=\cemetery=X_{\infty}$), so that
$X_{\zeta-}:\Omega\to M_{\cemetery}$ is
$\sigmafield{F}_{\infty}/\sigmafield{B}(M_{\cemetery})$-measurable by the
left-continuity of $[0,\infty)\ni t\mapsto X_{t-}(\omega)\in M_{\cemetery}$.
By \cite[Theorem 4.5.3]{FOT}, such $N$ exists if and only if
$(\functionspace{E},\functionspace{F})$ is \emph{strongly local},
i.e., $\functionspace{E}(u,v)=0$ for any $u,v\in\functionspace{F}$
with $\supp_{\mu}[u],\supp_{\mu}[v]$ compact and $u=c$ $\mu$-a.e.\ on a
neighborhood of $\supp_{\mu}[v]$ for some $c\in\mathbb{R}$.

Below we will also consider the situation where the set $N$
fixed above satisfies
\begin{equation}\label{eq:X-continuous-conservative}
\textrm{both}\quad\eqref{eq:X-continuous}\quad\textrm{and}\quad
	\mathbb{P}_{x}[\zeta<\infty]=0
\end{equation}
for any $x\in M\setminus N$,
more strongly than \eqref{eq:X-continuous-no-killing-inside}.
By \cite[Theorem 4.5.1 and Exercise 4.5.1]{FOT},
such a properly exceptional set $N\in\sigmafield{B}(M)$ for $X$ exists if and
only if $(\functionspace{E},\functionspace{F})$ is local and \emph{conservative},
i.e., $T_{t}\ind{}=\ind{}$ $\mu$-a.e.\ for any (or equivalently, for some)
$t\in(0,\infty)$, where $\ind{}:=\ind{M}$; recall
(see, e.g., \cite[(1.1.9) and (1.1.11)]{CF}) that
for a Markovian bounded linear operator $T:L^{2}(M,\mu)\to L^{2}(M,\mu)$,
$T|_{L^{2}(M,\mu)\cap L^{\infty}(M,\mu)}$ can be uniquely extended to
a linear operator $T:L^{\infty}(M,\mu)\to L^{\infty}(M,\mu)$ such that
$\lim_{n\to\infty}Tu_{n}=Tu$ $\mu$-a.e.\ for any $u\in L^{\infty}(M,\mu)$
and any $\{u_{n}\}_{n\in\mathbb{N}}\subset L^{\infty}(M,\mu)$ with
$u_{n}\leq u_{n+1}$ $\mu$-a.e.\ for any $n\in\mathbb{N}$ and
$\lim_{n\to\infty}u_{n}=u$ $\mu$-a.e.

\begin{remark}\label{rmk:exit-probability}
In fact, \emph{Theorems \textup{\ref{thm:exit-probability}} and
\textup{\ref{thm:exit-time-probability}} below apply, without any changes in
the proofs, to any locally compact separable metric space $(M,d)$,
any Hunt process $X$ on $(M,\sigmafield{B}(M))$ and any
$N\in\sigmafield{B}(M)$ satisfying \eqref{eq:properly-exceptional} and
\eqref{eq:X-continuous-no-killing-inside} for any $x\in M\setminus N$}.
\end{remark}

The main result of this section is the following theorem, which is a localized version
of an unpublished result \cite[Theorem 9.1]{Gri:HKfractal} by the first named author.
We refer the reader to \cite[Subsection 5.4]{GriHu:Upper} for an alternative
analytic approach. We set $e^{-\infty}:=0$.

\begin{theorem}\label{thm:exit-probability}
Let $U$ be a non-empty open subset of $M$ and let $R\in(0,\infty]$.
Then among the following seven conditions, the latter six \textup{(2)--(7)}
are equivalent and imply \textup{(1)}:
\begin{itemize}[label=\textup{(1)},align=left,leftmargin=*]
\item[\textup{(1)}]
	There exist $\varepsilon\in(0,\frac{1}{2})$ and $\delta\in(0,\infty)$ such that
	for any $(x,r)\in(U\setminus N)\times(0,R)$ with $B(x,r)\subset U$
	and $\overline{B(x,r)}$ compact and for any $t\in(0,\delta\Psi(r)]$,
	\begin{equation}\label{eq:exit-probability1}
		\mathbb{P}_{x}[X_{t}\in M_{\cemetery}\setminus B(x,r)]\leq\varepsilon.
	\end{equation}
\item[\textup{(2)}]
	There exist $\varepsilon\in(0,1)$ and $\delta\in(0,\infty)$ such that
	for any $(x,r)\in(U\setminus N)\times(0,R)$ with $B(x,r)\subset U$
	and $\overline{B(x,r)}$ compact,
	\begin{equation}\label{eq:exit-probability2}
		\mathbb{P}_{x}[\tau_{B(x,r)}\leq\delta\Psi(r)]\leq\varepsilon.
	\end{equation}
\item[\textup{(3)}]
	There exists $\varepsilon\in(0,\infty)$ such that
	for any $(x,r)\in(U\setminus N)\times(0,R)$ with $B(x,r)\subset U$
	and $\overline{B(x,r)}$ compact,
	\begin{equation}\label{eq:exit-probability3}
		\mathbb{E}_{x}[\tau_{B(x,r)}\wedge\Psi(r)]\geq\varepsilon\Psi(r).
	\end{equation}
\item[\textup{(4)}]
	There exist $\varepsilon\in(0,1)$ and $\delta\in(0,\infty)$ such that
	for any $(x,r)\in(U\setminus N)\times(0,R)$ with $B(x,r)\subset U$
	and $\overline{B(x,r)}$ compact,
	\begin{equation}\label{eq:exit-probability4}
		\mathbb{E}_{x}\Bigl[\exp\Bigl(-\frac{\tau_{B(x,r)}}{\delta\Psi(r)}\Bigr)\Bigr]\leq\varepsilon.
	\end{equation}
\item[\textup{(5)}]
	There exist $c,\gamma\in(0,\infty)$ such that for any
	$(x,r)\in(U\setminus N)\times(0,R)$ with $B(x,r)\subset U$
	and $\overline{B(x,r)}$ compact and for any $\lambda\in(0,\infty)$,
	\begin{equation}\label{eq:exit-probability5}
		\mathbb{E}_{x}[e^{-\lambda\tau_{B(x,r)}}]\leq c\exp\Bigl(-\frac{\gamma r}{\Psi^{-1}(\lambda^{-1})}\Bigr).
	\end{equation}
\item[\textup{(6)}]
	There exist $c,\gamma\in(0,\infty)$ such that for any
	$(x,r)\in(U\setminus N)\times(0,R)$ with $B(x,r)\subset U$
	and $\overline{B(x,r)}$ compact and for any $t\in(0,\infty)$,
	\begin{equation}\label{eq:exit-probability6}
		\mathbb{P}_{x}[\tau_{B(x,r)}\leq t]\leq c\exp(-\Phi(\gamma r,t)).
	\end{equation}
\item[\textup{(7)}]
	There exist $c,\gamma\in(0,\infty)$ such that for any
	$(x,r)\in(U\setminus N)\times(0,R)$ with $B(x,r)\subset U$
	and $\overline{B(x,r)}$ compact and for any $t\in(0,\infty)$,
	\begin{equation}\label{eq:exit-probability7}
		\mathbb{P}_{x}[\tau_{B(x,r)}\leq t]\leq
		c\exp\biggl(-\gamma\Bigl(\frac{\Psi(r)}{t}\Bigr)^{\frac{1}{\beta_{2}-1}}\biggr).
	\end{equation}
\end{itemize}

Moreover, if $N$ satisfies \eqref{eq:X-continuous-conservative} for any
$x\in M\setminus N$, then with ``and $\overline{B(x,r)}$ compact" all removed,
still the conditions \textup{(2)--(7)} are equivalent, imply \textup{(1)}
and are implied by the following condition \textup{(1)$'$}:
\begin{itemize}[label=\textup{(1)$'$},align=left,leftmargin=*]
\item[\textup{(1)$'$}]
	There exist $\varepsilon\in(0,\frac{1}{2})$ and $\delta\in(0,\infty)$ such that
	for any $(x,r)\in(\overline{U}\setminus N)\times(0,\frac{R}{2})$
	and any $t\in(0,\delta\Psi(r)]$,
	\begin{equation}\label{eq:exit-probability1'}
		\mathbb{P}_{x}[X_{t}\in M\setminus B(x,r)]\leq\varepsilon.
	\end{equation}
\end{itemize}
\end{theorem}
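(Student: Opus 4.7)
My plan is to prove the cycle \textup{(2)}$\Rightarrow$\textup{(3)}$\Rightarrow$\textup{(4)}$\Rightarrow$\textup{(5)}$\Rightarrow$\textup{(6)}$\Leftrightarrow$\textup{(7)}$\Rightarrow$\textup{(2)}, together with \textup{(6)}$\Rightarrow$\textup{(1)}. Most of these implications are routine given the elementary facts stored in Lemma~\ref{lem:upper-bound-function-Psi} and the strong Markov property. Specifically: \textup{(2)}$\Rightarrow$\textup{(3)} follows from $\mathbb{E}_{x}[\tau_{B(x,r)}\wedge\Psi(r)]\geq(1-\varepsilon)(\delta\wedge 1)\Psi(r)$; \textup{(3)}$\Rightarrow$\textup{(4)} follows from the scalar inequality $e^{-a/\delta}\leq 1-(1-e^{-1/\delta})(a\wedge 1)$ applied to $a=\tau_{B(x,r)}/\Psi(r)$; \textup{(5)}$\Rightarrow$\textup{(6)} follows from $\mathbb{P}_{x}[\tau_{B(x,r)}\leq t]\leq e^{\lambda t}\mathbb{E}_{x}[e^{-\lambda\tau_{B(x,r)}}]$ optimized over $\lambda\in(0,\infty)$, using the second expression for $\Phi$ in \eqref{eq:upper-bound-function-Phi}; \textup{(6)}$\Leftrightarrow$\textup{(7)} is immediate from the two inequalities in \eqref{eq:upper-bound-function-Phi-ULE} (adjusting constants); \textup{(7)}$\Rightarrow$\textup{(2)} follows by setting $t=\delta\Psi(r)$ with $\delta$ small enough that $ce^{-\gamma\delta^{-1/(\beta_{2}-1)}}<1$. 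Finally, \textup{(6)}$\Rightarrow$\textup{(1)} is immediate since, by \eqref{eq:X-continuous-no-killing-inside} and \eqref{eq:properly-exceptional} for the properly exceptional set $N$, on paths starting at $x\in M\setminus N$ the set $\{X_{t}\in M_{\cemetery}\setminus B(x,r)\}$ is contained in $\{\tau_{B(x,r)}\leq t\}$, so one takes $\delta$ small.

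The main obstacle is \textup{(4)}$\Rightarrow$\textup{(5)}, the Barlow-style chaining argument. Fix $(x,r)$ with $B(x,r)\subset U$ and $\overline{B(x,r)}$ compact, let $n\in\mathbb{N}$, set $\rho:=r/n$, and define $\sigmafield{F}_{*}$-stopping times inductively by $\sigma_{0}:=0$ and $\sigma_{k+1}:=\sigma_{k}+\tau_{B(X_{\sigma_{k}},\rho)}\circ\theta_{\sigma_{k}}$ for $k\in\{0,\dots,n-1\}$. On the full $\mathbb{P}_{x}$-probability event $\{\dot{\sigma}_{N}=\infty\}\cap\{[0,\zeta)\ni t\mapsto X_{t}\in M\textrm{ is continuous}\}$, an induction using the sample-path continuity and \eqref{eq:X-continuous-entrance} shows $d(X_{\sigma_{k}},x)\leq k\rho$, so $X_{\sigma_{k}}\in (B(x,r)\setminus N)$ and $B(X_{\sigma_{k}},\rho)\subset B(x,r)$ with compact closure for each $k\leq n-1$, and moreover $\tau_{B(x,r)}\geq\sigma_{n}$. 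Iterating the strong Markov property (Proposition~\ref{prop:strong-Markov}) together with \textup{(4)} applied at each $X_{\sigma_{k}}$ with parameter $\lambda:=(\delta\Psi(\rho))^{-1}$, we obtain
\begin{equation*}
\mathbb{E}_{x}\bigl[e^{-\lambda\tau_{B(x,r)}}\bigr]
	\leq\mathbb{E}_{x}[e^{-\lambda\sigma_{n}}]\leq\varepsilon^{n}.
\end{equation*}
Given an arbitrary $\lambda\in(0,\infty)$, we choose $n=n(\lambda)\in\mathbb{N}$ comparable to $r/\Psi^{-1}(1/(\lambda\delta))$ so that $\lambda$ is comparable to $(\delta\Psi(\rho))^{-1}$, and then use \eqref{eq:upper-bound-function-Psi} to absorb the $\delta$-dependence of $\Psi^{-1}(1/(\lambda\delta))$ into a multiplicative constant against $\Psi^{-1}(\lambda^{-1})$, yielding $\varepsilon^{n}\leq c\exp(-\gamma r/\Psi^{-1}(\lambda^{-1}))$ with constants $c,\gamma$ depending only on $\beta_{1},\beta_{2},c_{\Psi},\varepsilon,\delta$; for small $\lambda$ (where the natural $n$ would be $<1$) the bound is trivial from $\mathbb{E}_{x}[e^{-\lambda\tau_{B(x,r)}}]\leq 1$ after enlarging $c$.

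For the conservative addendum, one has to verify $\textup{(1)}'\Rightarrow\textup{(2)}$, a version of L\'{e}vy's maximal inequality: write
\begin{equation*}
\mathbb{P}_{x}[\tau_{B(x,r)}\leq t]
	\leq\mathbb{P}_{x}[\tau_{B(x,r/2)}\leq t,\,X_{t}\in B(x,r/2)]
	+\mathbb{P}_{x}[X_{t}\in M\setminus B(x,r/2)],
\end{equation*}
the second term being bounded by $\varepsilon$ via $\textup{(1)}'$ and \eqref{eq:X-continuous-conservative}. For the first term, conditioning via Proposition~\ref{prop:strong-Markov} at $\tau_{B(x,r/2)}\wedge t$ and using that on $\{\tau_{B(x,r/2)}\leq t\}$ the point $X_{\tau_{B(x,r/2)}}$ lies in $(\partial B(x,r/2))\setminus N\subset\overline{U}\setminus N$ (by \eqref{eq:X-continuous-entrance} and proper exceptionality), while $X_{t}\in B(x,r/2)$ forces $X_{t-\tau_{B(x,r/2)}}$ to be at distance $\geq r/2$ from its starting point, one bounds it again by $\varepsilon$ via $\textup{(1)}'$; choosing $\varepsilon<\frac{1}{4}$ and $\delta$ as in $\textup{(1)}'$ then yields \textup{(2)} with constant $2\varepsilon<1$. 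All other implications go through verbatim after dropping the compact-closure hypotheses, which are used in the chaining argument only to ensure that the intermediate balls have compact closure, a fact automatic once $X$ never reaches $\cemetery$ in finite time.
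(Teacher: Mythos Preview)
Your proof is essentially the same cycle as the paper's, with the same Barlow chaining for \textup{(4)}$\Rightarrow$\textup{(5)} (you use balls centred at the moving process rather than the paper's concentric balls $B(x,k\rho)$, but this is an inessential variant). Two points deserve correction.

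First, the direct implication \textup{(7)}$\Rightarrow$\textup{(6)} does \emph{not} follow from \eqref{eq:upper-bound-function-Phi-ULE}: when $\Psi(\gamma r)/t$ is large, $\Phi(\gamma r,t)$ can be of order $(\Psi(\gamma r)/t)^{1/(\beta_{1}-1)}$, which dominates the exponent $(\Psi(r)/t)^{1/(\beta_{2}-1)}$ appearing in \textup{(7)} whenever $\beta_{1}<\beta_{2}$. This is harmless for the theorem, since your cycle closes anyway via \textup{(7)}$\Rightarrow$\textup{(2)}$\Rightarrow\cdots\Rightarrow$\textup{(6)}; just drop the claim that \textup{(7)}$\Rightarrow$\textup{(6)} is immediate.

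Second, and more seriously, your argument for \textup{(1)$'$}$\Rightarrow$\textup{(2)} has a genuine slip. You condition at $\tau_{B(x,r/2)}$ and assert that ``$X_{t}\in B(x,r/2)$ forces $X_{t-\tau_{B(x,r/2)}}$ to be at distance $\geq r/2$ from its starting point''. But the starting point after the strong Markov property is $X_{\tau_{B(x,r/2)}}\in\partial B(x,r/2)$, and a point of $\partial B(x,r/2)$ can be arbitrarily close to points of $B(x,r/2)$; no lower bound on the displacement follows. The fix (and this is what the paper does) is to condition instead at $\tau_{B(x,r)}$: then $X_{\tau_{B(x,r)}}\in(\partial B(x,r))\setminus N\subset\overline{U}\setminus N$, and for any $z\in B(x,r/2)$ one has $d(z,X_{\tau_{B(x,r)}})\geq r-r/2=r/2$, so \textup{(1)$'$} with radius $r/2$ applies. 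You must also take $t\leq\delta\Psi(r/2)$ rather than $\delta\Psi(r)$ (hence the $\delta$ in \textup{(2)} picks up a factor $c_{\Psi}^{-1}2^{-\beta_{2}}$), and $\varepsilon<\tfrac12$ already suffices for $2\varepsilon<1$.
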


\begin{proof}
We follow \cite[Proof of Theorem 9.1]{Gri:HKfractal}; for the implications
\textup{(4)}$\Rightarrow$\textup{(5)}$\Rightarrow$\textup{(6)}$\Rightarrow$\textup{(7)}
see also \cite[Proofs of Lemma 3.14, Theorem 3.15 and Corollary 3.20]{GT}.

We treat the two cases simultaneously, one with ``and $\overline{B(x,r)}$ compact"
kept and without \eqref{eq:X-continuous-conservative} and the other with
``and $\overline{B(x,r)}$ compact" removed and \eqref{eq:X-continuous-conservative} assumed.
Let $(x,r)\in(U\setminus N)\times(0,R)$ satisfy $B(x,r)\subset U$
and set $\tau:=\tau_{B(x,r)}$. We assume in the former case that
$\overline{B(x,r)}$ is compact, while not in the latter case.
It easily follows either from \eqref{eq:X-continuous-no-killing-inside} and the
compactness of $\overline{B(x,r)}$ or from \eqref{eq:X-continuous-conservative},
together with $\mathbb{P}_{x}[X_{0}=x]=1$ and \eqref{eq:properly-exceptional}, that
\begin{equation}\label{eq:exit-ball-boundary}
\mathbb{P}_{x}\bigl[\tau_{B(x,\rho)}<\infty,\,X_{\tau_{B(x,\rho)}}\not\in(\partial B(x,\rho))\setminus N\bigr]=0
	\qquad\textrm{for any }\rho\in(0,r].
\end{equation}

\textup{(2)}$\Rightarrow$\textup{(3)}:
Since
$\mathbb{P}_{x}[\tau>\delta\Psi(r)]=1-\mathbb{P}_{x}[\tau\leq\delta\Psi(r)]\geq 1-\varepsilon$
by \eqref{eq:exit-probability2},
\begin{equation*}
\mathbb{E}_{x}[\tau\wedge\Psi(r)]
	\geq\mathbb{E}_{x}[(\tau\wedge\Psi(r))\ind{\{\tau>\delta\Psi(r)\}}]
	\geq(\delta\wedge 1)\Psi(r)\mathbb{P}_{x}[\tau>\delta\Psi(r)]
	\geq(1-\varepsilon)(\delta\wedge 1)\Psi(r).
\end{equation*}

\textup{(3)}$\Rightarrow$\textup{(4)}:
For $\lambda,t\in(0,\infty)$, by considering the case of $\tau\geq t$ and
that of $\tau\leq t$ separately we easily see that
$e^{-\lambda\tau}\leq 1-\lambda e^{-\lambda t}(\tau\wedge t)$,
and therefore for \emph{any} $\delta\in(0,\infty)$, setting $\lambda:=(\delta\Psi(r))^{-1}$
and $t:=\Psi(r)$, taking $\mathbb{E}_{x}[(\cdot)]$ and applying \eqref{eq:exit-probability3}, we obtain
\begin{equation*}
\mathbb{E}_{x}\Bigl[\exp\Bigl(-\frac{\tau}{\delta\Psi(r)}\Bigr)\Bigr]
	\leq 1-\frac{1}{\delta\Psi(r)}e^{-1/\delta}\mathbb{E}_{x}[\tau\wedge\Psi(r)]
	\leq 1-\frac{\varepsilon}{\delta}e^{-1/\delta}.
\end{equation*}

\textup{(4)}$\Rightarrow$\textup{(5)}:
Let $\lambda\in[(\delta\Psi(r))^{-1},\infty)$, set
$n:=\max\{k\in\mathbb{N}\mid\lambda\delta\Psi(r/k)\geq 1\}$ and $\rho:=r/n$.
Also set $B_{k}:=B(x,k\rho)$ and $\tau_{k}:=\tau_{B_{k}}$ for $k\in\{1,\dots,n\}$.
We claim that
\begin{equation}\label{eq:exit-probability-iteration}
\mathbb{E}_{x}[e^{-\lambda\tau_{k+1}}]\leq\varepsilon\mathbb{E}_{x}[e^{-\lambda\tau_{k}}]
	\qquad\textrm{for any }k\in\{1,\dots,n\}\textrm{ with }k<n.
\end{equation}
To see \eqref{eq:exit-probability-iteration}, let $k\in\{1,\dots,n\}$ satisfy $k<n$
and let $y\in(\partial B_{k})\setminus N$. Then obviously
$(y,\rho)\in(U\setminus N)\times(0,R)$,
$B(y,\rho)\subset B_{k+1}\subset B(x,r)\subset U$,
hence $\tau_{B(y,\rho)}\leq\tau_{k+1}$, and
$\overline{B(y,\rho)}$ is compact if $\overline{B(x,r)}$ is.
Thus \textup{(4)} applies to $(y,\rho)$, so that from
\eqref{eq:exit-probability4} we obtain
\begin{equation}\label{eq:exit-probability4-apply}
\mathbb{E}_{y}[e^{-\lambda\tau_{k+1}}]
	\leq\mathbb{E}_{y}[e^{-\lambda\tau_{B(y,\rho)}}]
	\leq\mathbb{E}_{y}\Bigl[\exp\Bigl(-\frac{\tau_{B(y,\rho)}}{\delta\Psi(\rho)}\Bigr)\Bigr]
	\leq\varepsilon,
\end{equation}
noting that $\lambda\geq(\delta\Psi(\rho))^{-1}$ by the choice of $n$.
Now since $\tau_{k+1}=\tau_{k}+\tau_{k+1}\circ\theta_{\tau_{k}}$,
it follows by the strong Markov property \cite[Theorem A.1.21]{CF} of $X$,
\eqref{eq:exit-ball-boundary} and \eqref{eq:exit-probability4-apply} that
\begin{align*}
\mathbb{E}_{x}[e^{-\lambda\tau_{k+1}}]
	=\mathbb{E}_{x}[e^{-\lambda\tau_{k}}(e^{-\lambda\tau_{k+1}}\circ\theta_{\tau_{k}})]
	&=\mathbb{E}_{x}\bigl[e^{-\lambda\tau_{k}}\mathbb{E}_{x}[e^{-\lambda\tau_{k+1}}\circ\theta_{\tau_{k}}\mid\sigmafield{F}_{\tau_{k}}]\bigr]\\
&=\mathbb{E}_{x}\bigl[e^{-\lambda\tau_{k}}\mathbb{E}_{X_{\tau_{k}}}[e^{-\lambda\tau_{k+1}}]\bigr]\\
&=\mathbb{E}_{x}\bigl[\ind{\{X_{\tau_{k}}\in(\partial B_{k})\setminus N\}}e^{-\lambda\tau_{k}}\mathbb{E}_{X_{\tau_{k}}}[e^{-\lambda\tau_{k+1}}]\bigr]\\
&\leq\varepsilon\mathbb{E}_{x}[e^{-\lambda\tau_{k}}].
\end{align*}

\begin{figure}[t]\centering
	\includegraphics[height=180pt]{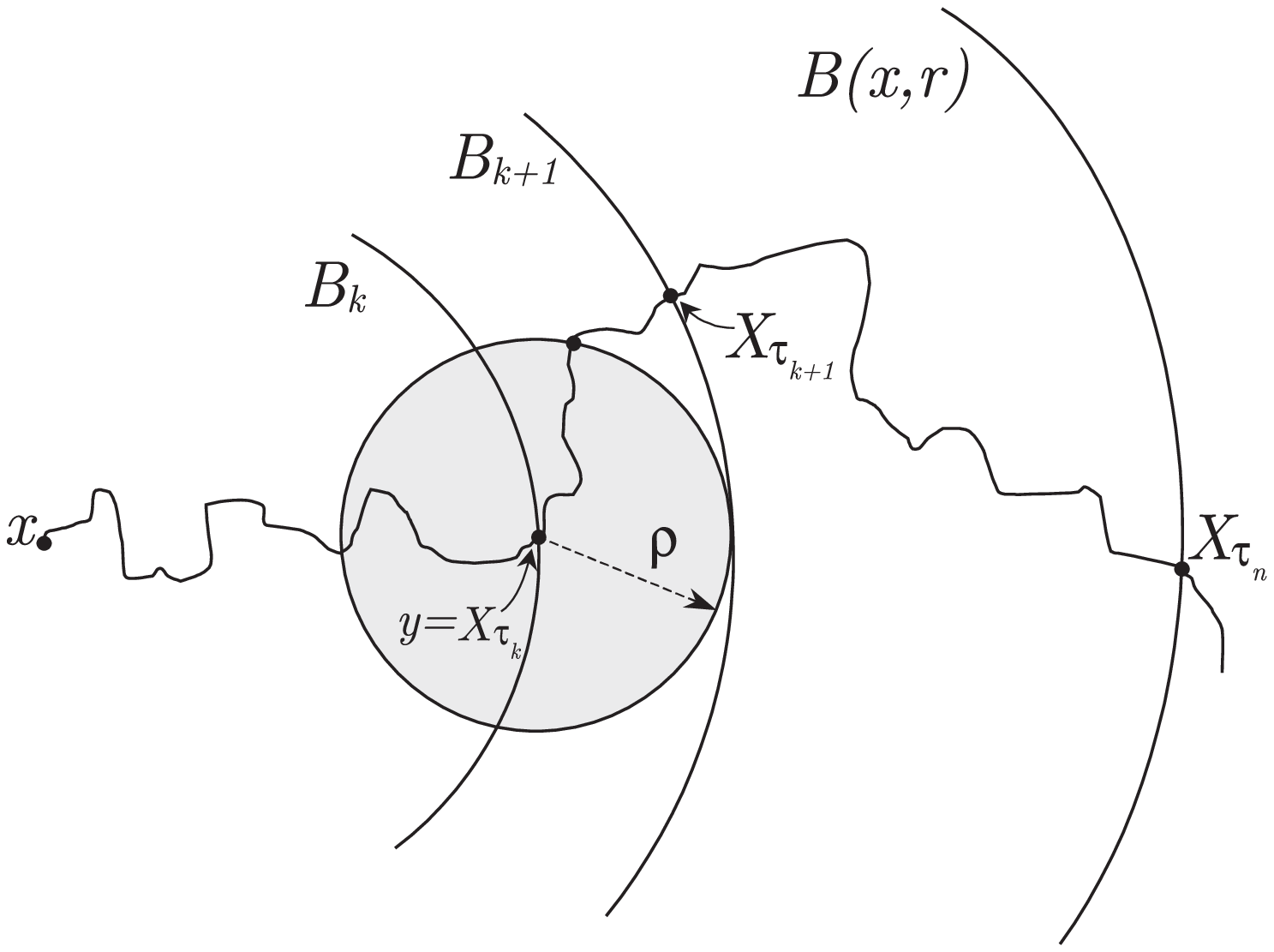}
	\caption{Proof of \textup{(4)}$\Rightarrow$\textup{(5)}: the exit times $\tau_{k}$, $\tau_{k+1}$ and $\tau_{B(y,\rho)}$}
	\label{fig:exit-probability-iteration}
\end{figure}
Thus we have proved \eqref{eq:exit-probability-iteration}, which together with
$\tau=\tau_{n}$ and \eqref{eq:exit-probability4} for $(x,\rho)$ yields
\begin{align*}
\mathbb{E}_{x}[e^{-\lambda\tau}]\leq\varepsilon^{n-1}\mathbb{E}_{x}[e^{-\lambda\tau_{1}}]
	\leq\varepsilon^{n-1}\mathbb{E}_{x}\Bigl[\exp\Bigl(-\frac{\tau_{1}}{\delta\Psi(\rho)}\Bigr)\Bigr]
	\leq\varepsilon^{n}
	<\varepsilon^{-1}\exp\Bigl(-\frac{\gamma r}{\Psi^{-1}(\lambda^{-1})}\Bigr),
\end{align*}
where $\gamma:=\eta\log(\varepsilon^{-1})$ with
$\eta:=(\delta/c_{\Psi})^{1/\beta_{1}}\wedge 1$ and the last inequality follows since
$1>\lambda\delta\Psi(\frac{r}{n+1})\geq\lambda\Psi(\frac{\eta r}{n+1})$
by the choice of $n$ and \eqref{eq:upper-bound-function-Psi} and hence
$n+1>\eta r/\Psi^{-1}(\lambda^{-1})$.

\eqref{eq:exit-probability5} therefore holds for $\lambda\in[(\delta\Psi(r))^{-1},\infty)$.
On the other hand, for $\lambda\in(0,(\delta\Psi(r))^{-1})$, since
$\lambda^{-1}>\delta\Psi(r)\geq\Psi(\eta r)$ by \eqref{eq:upper-bound-function-Psi}
and hence $\Psi^{-1}(\lambda^{-1})>\eta r$, we have
$\mathbb{E}_{x}[e^{-\lambda\tau}]\leq 1
	<e^{\gamma/\eta}\exp\bigl(-\gamma r/\Psi^{-1}(\lambda^{-1})\bigr)$,
completing the proof of \textup{(4)}$\Rightarrow$\textup{(5)}.

\textup{(5)}$\Rightarrow$\textup{(6)}:
For any $t,\lambda\in(0,\infty)$, we see from \eqref{eq:exit-probability5} that
\begin{equation*}
\mathbb{P}_{x}[\tau\leq t]=\mathbb{P}_{x}[e^{-\lambda t}\leq e^{-\lambda\tau}]
	\leq e^{\lambda t}\mathbb{E}_{x}[e^{-\lambda\tau}]
	\leq c\exp\Bigl(\lambda t-\frac{\gamma r}{\Psi^{-1}(\lambda^{-1})}\Bigr),
\end{equation*}
and taking the infimum of the right-hand side in $\lambda\in(0,\infty)$ shows
\eqref{eq:exit-probability6} in view of \eqref{eq:upper-bound-function-Phi}.

\textup{(6)}$\Rightarrow$\textup{(7)}:
Since $\Psi(\gamma r)\geq c_{\Psi}^{-1}(\gamma^{\beta_{2}}\wedge 1)\Psi(r)$ with
$\gamma\in(0,\infty)$ as in \textup{(6)} by \eqref{eq:upper-bound-function-Psi},
if $\Psi(\gamma r)\geq t$ then \eqref{eq:exit-probability7} is immediate from
\eqref{eq:exit-probability6} and the lower inequality in
\eqref{eq:upper-bound-function-Phi-ULE}, whereas if $\Psi(\gamma r)<t$ then we have
$\mathbb{P}_{x}[\tau\leq t]\leq 1\leq c'\exp\bigl(-\gamma'(\Psi(r)/t)^{\frac{1}{\beta_{2}-1}}\bigr)$
for some $c',\gamma'\in(0,\infty)$ explicit in $c_{\Psi},\beta_{1},\beta_{2},\gamma$.

\textup{(7)}$\Rightarrow$\textup{(2)},\textup{(1)}:
For \emph{any} $\varepsilon\in(0,c\wedge\frac{1}{2})$, setting
$\delta:=\bigl(\gamma/\log(c/\varepsilon)\bigr)^{\beta_{2}-1}\in(0,\infty)$,
for any $t\in(0,\delta\Psi(r)]$ we see from
$\{X_{t}\in M_{\cemetery}\setminus B(x,r)\}\subset\{\tau\leq\delta\Psi(r)\}$
and \eqref{eq:exit-probability7} that
\begin{align*}
\mathbb{P}_{x}[X_{t}\in M_{\cemetery}\setminus B(x,r)]
	\leq\mathbb{P}_{x}[\tau\leq\delta\Psi(r)]
	\leq c\exp\bigl(-\gamma\delta^{-\frac{1}{\beta_{2}-1}}\bigr)
	=\varepsilon.
\end{align*}

\emph{\textup{(1)$'$}$\Rightarrow$\textup{(2)} under \eqref{eq:X-continuous-conservative}}:
Note that \eqref{eq:exit-probability1'} is valid also for $t=0$ since $\mathbb{P}_{y}[X_{0}=y]=1$
for $y\in M$. Let $t:=c_{\Psi}^{-1}2^{-\beta_{2}}\delta\Psi(r)$, so that
$t\in(0,\delta\Psi(\frac{r}{2})]$ by \eqref{eq:upper-bound-function-Psi}.
We first show that
\begin{equation}\label{eq:exit-probability1'-proof1}
\mathbb{P}_{x}[\tau\leq t,\,X_{t}\in B(x,{\textstyle\frac{r}{2}})]\leq\varepsilon.
\end{equation}
Indeed, if $y\in(\partial B(x,r))\setminus N$, then
clearly $B(x,\frac{r}{2})\subset M\setminus B(y,\frac{r}{2})$,
$(y,\frac{r}{2})\in(\overline{U}\setminus N)\times(0,\frac{R}{2})$
by $B(x,r)\subset U$ and hence \textup{(1)$'$} applies to $(y,\frac{r}{2})$,
so that \eqref{eq:exit-probability1'} yields
\begin{equation}\label{eq:exit-probability1'-proof2}
\mathbb{P}_{y}[X_{s}\in B(x,{\textstyle\frac{r}{2}})]
	\leq\mathbb{P}_{y}[X_{s}\in M\setminus B(y,{\textstyle\frac{r}{2}})]
	\leq\varepsilon
\end{equation}
for any $y\in(\partial B(x,r))\setminus N$
and any $s\in[0,t]\subset[0,\delta\Psi(\frac{r}{2})]$.
Then since $\{\tau\leq t\}\in\sigmafield{F}_{\tau\wedge t}$ by \cite[Lemma 1.2.16]{KS},
it follows from Proposition \ref{prop:strong-Markov}, \eqref{eq:exit-ball-boundary}
and \eqref{eq:exit-probability1'-proof2} that
\begin{align*}
\mathbb{P}_{x}[\tau\leq t,\,X_{t}\in B(x,{\textstyle\frac{r}{2}})]
	&=\mathbb{E}_{x}[\ind{\{\tau\leq t\}}\ind{B(x,r/2)}(X_{t})]\\
&=\mathbb{E}_{x}\bigl[\ind{\{\tau\leq t\}}\mathbb{E}_{x}[\ind{B(x,r/2)}(X_{t})\mid\sigmafield{F}_{\tau\wedge t}]\bigr]\\
&=\int_{\{\tau\leq t\}}\mathbb{E}_{X_{\tau\wedge t}(\omega)}[\ind{B(x,r/2)}(X_{t-\tau(\omega)\wedge t})]\,d\mathbb{P}_{x}(\omega)\\
&=\int_{\{\tau\leq t,\,X_{\tau}\in(\partial B(x,r))\setminus N\}}
	\mathbb{P}_{X_{\tau}(\omega)}[X_{t-\tau(\omega)}\in B(x,{\textstyle\frac{r}{2}})]\,d\mathbb{P}_{x}(\omega)\\
&\leq\varepsilon\mathbb{P}_{x}[\tau\leq t,\,X_{\tau}\in(\partial B(x,r))\setminus N]\leq\varepsilon.
\end{align*}

Now noting that $\mathbb{P}_{x}[X_{t}=\cemetery]=0$ by \eqref{eq:X-continuous-conservative},
from \eqref{eq:exit-probability1'} for $(x,\frac{r}{2})$ and
\eqref{eq:exit-probability1'-proof1} we obtain
\begin{align*}
\mathbb{P}_{x}[\tau\leq t]
	&=\mathbb{P}_{x}[\tau\leq t,\,X_{t}=\cemetery]
	+\mathbb{P}_{x}[\tau\leq t,\,X_{t}\in M\setminus B(x,{\textstyle\frac{r}{2}})]
	+\mathbb{P}_{x}[\tau\leq t,\,X_{t}\in B(x,{\textstyle\frac{r}{2}})]\\
&\leq\mathbb{P}_{x}[X_{t}=\cemetery]+\mathbb{P}_{x}[X_{t}\in M\setminus B(x,{\textstyle\frac{r}{2}})]+\varepsilon\\
&\leq 2\varepsilon<1,
\end{align*}
which, in view of $t=c_{\Psi}^{-1}2^{-\beta_{2}}\delta\Psi(r)$,
completes the proof of \textup{(1)$'$}$\Rightarrow$\textup{(2)}
under \eqref{eq:X-continuous-conservative}.
\end{proof}

At the last of this paper, as an application of Theorem \ref{thm:exit-probability}
we state and prove a localized version of the well-known fact that the comparability
of the mean exit time $\mathbb{E}_{x}[\tau_{B(x,r)}]$ to $\Psi(r)$ implies the exit
probability estimate \eqref{eq:exit-probability6}. This fact was first observed by
M.\ T.\ Barlow as treated in \cite[Proof of Theorem 3.11]{Bar}, and
the proof below is also based on an idea of his in \cite{Bar}.

\begin{theorem}\label{thm:exit-time-probability}
Let $U$ be a non-empty open subset of $M$, let $R\in(0,\infty]$,
and assume that there exists $c_{\mathrm{E}}\in(0,\infty)$ such that
for any $(x,r)\in(U\setminus N)\times(0,2R)$,
\begin{equation}\label{eq:exit-time-upper}
\mathbb{E}_{x}[\tau_{B(x,r)}]\leq c_{\mathrm{E}}\Psi(r),
\end{equation}
and for any $(x,r)\in(U\setminus N)\times(0,R)$ with
$B(x,r)\subset U$ and $\overline{B(x,r)}$ compact,
\begin{equation}\label{eq:exit-time-lower}
\mathbb{E}_{x}[\tau_{B(x,r)}]\geq c_{\mathrm{E}}^{-1}\Psi(r).
\end{equation}
Then Theorem \textup{\ref{thm:exit-probability}-(6)} holds.

Moreover, additionally if $N$ satisfies \eqref{eq:X-continuous-conservative}
for any $x\in M\setminus N$ and if \eqref{eq:exit-time-lower} holds for
any $(x,r)\in(U\setminus N)\times(0,R)$ with $B(x,r)\subset U$, then
Theorem \textup{\ref{thm:exit-probability}-(6)} with ``and $\overline{B(x,r)}$ compact" removed holds.
\end{theorem}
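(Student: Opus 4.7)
The plan is to reduce the claim to the equivalent condition \textup{(2)} of Theorem \ref{thm:exit-probability}. Setting $\tau:=\tau_{B(x,r)}$, I seek $\delta\in(0,\infty)$ and $\varepsilon\in(0,1)$ depending only on $c_{\mathrm{E}},c_{\Psi},\beta_{2}$ such that $\mathbb{P}_{x}[\tau\leq\delta\Psi(r)]\leq\varepsilon$ uniformly over the admissible $(x,r)$; the equivalence proved in Theorem \ref{thm:exit-probability} then delivers \textup{(6)} automatically.

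The key observation is a one-step Markov decomposition of $\mathbb{E}_{x}[\tau]$ at the deterministic time $t:=\delta\Psi(r)$. Since $\{\tau>t\}\in\sigmafield{F}_{t}$ and $\tau=t+\tau\circ\theta_{t}$ on this event by the very definition of $\tau$, the ordinary Markov property at time $t$ gives
\begin{equation*}
\mathbb{E}_{x}[\tau\ind{\{\tau>t\}}]=t\,\mathbb{P}_{x}[\tau>t]+\mathbb{E}_{x}\bigl[\ind{\{\tau>t\}}\mathbb{E}_{X_{t}}[\tau_{B(x,r)}]\bigr].
\end{equation*}
On $\{\tau>t\}$ one has $X_{t}\in B(x,r)$ by the very definition of $\tau$, and $X_{t}\notin N$ $\mathbb{P}_{x}$-a.s.\ by \eqref{eq:properly-exceptional}. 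For any such $y:=X_{t}\in B(x,r)\setminus N$, the inclusion $B(x,r)\subset B(y,2r)$ yields $\tau_{B(x,r)}\leq\tau_{B(y,2r)}$ $\mathbb{P}_{y}$-a.s., so \eqref{eq:exit-time-upper} applied at radius $2r<2R$ (this is precisely why that hypothesis is stated up to $2R$) produces the uniform estimate $\mathbb{E}_{y}[\tau_{B(x,r)}]\leq c_{\mathrm{E}}\Psi(2r)\leq c_{\mathrm{E}}c_{\Psi}2^{\beta_{2}}\Psi(r)$ via \eqref{eq:upper-bound-function-Psi}. Combined with the trivial $\mathbb{E}_{x}[\tau\ind{\{\tau\leq t\}}]\leq t\mathbb{P}_{x}[\tau\leq t]$, the decomposition collapses to
\begin{equation*}
\mathbb{E}_{x}[\tau]\leq\delta\Psi(r)+c_{\mathrm{E}}c_{\Psi}2^{\beta_{2}}\Psi(r)\,\mathbb{P}_{x}[\tau>\delta\Psi(r)].
\end{equation*}

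Confronting this upper bound with the lower bound $\mathbb{E}_{x}[\tau]\geq c_{\mathrm{E}}^{-1}\Psi(r)$ from \eqref{eq:exit-time-lower} and choosing $\delta:=(2c_{\mathrm{E}})^{-1}$ forces
\begin{equation*}
\mathbb{P}_{x}[\tau>\delta\Psi(r)]\geq\frac{1}{2c_{\mathrm{E}}^{2}c_{\Psi}2^{\beta_{2}}},
\end{equation*}
which is exactly condition \textup{(2)} with $\varepsilon:=1-(2c_{\mathrm{E}}^{2}c_{\Psi}2^{\beta_{2}})^{-1}\in(0,1)$. For the second assertion, the argument goes through verbatim: nothing above invokes compactness of $\overline{B(x,r)}$, only the two exit-time bounds at radii $r$ and $2r$, so the strengthened hypotheses feed directly into the same computation, and the final appeal is to the conservative half of Theorem \ref{thm:exit-probability}, which supplies the equivalence \textup{(2)}$\Leftrightarrow$\textup{(6)} without the compactness proviso.

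I do not expect a serious obstacle. The one point warranting care is to track that the bound $\mathbb{E}_{X_{t}}[\tau_{B(x,r)}]\leq c_{\mathrm{E}}c_{\Psi}2^{\beta_{2}}\Psi(r)$ is genuinely uniform in $X_{t}\in B(x,r)\setminus N$, so that it may be factored out of the conditional expectation, and to confirm that restricting to $X_{t}\in B(x,r)\setminus N$ on $\{\tau>t\}$ costs nothing thanks to the properly exceptional property of $N$.
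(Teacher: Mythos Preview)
Your proof is correct and essentially identical to the paper's own argument: both set $t=\frac{1}{2}c_{\mathrm{E}}^{-1}\Psi(r)$, apply the Markov property at time $t$ together with the inclusion $B(x,r)\subset B(X_{t},2r)$ and \eqref{eq:exit-time-upper} at radius $2r$ to bound $\mathbb{E}_{x}[\tau]$ from above, confront this with \eqref{eq:exit-time-lower}, and obtain condition \textup{(2)} of Theorem~\ref{thm:exit-probability} with the same constant $\varepsilon=1-(c_{\mathrm{E}}^{2}c_{\Psi}2^{\beta_{2}+1})^{-1}$. The only cosmetic difference is that the paper writes the single inequality $\tau\leq t+(\tau\circ\theta_{t})\ind{\{t<\tau\}}$ in place of your explicit split into $\{\tau\leq t\}$ and $\{\tau>t\}$.
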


\begin{proof}
As in the proof of Theorem \ref{thm:exit-probability}, we treat the two cases
simultaneously, one with ``and $\overline{B(x,r)}$ compact" kept and without
\eqref{eq:X-continuous-conservative} and the other with ``and $\overline{B(x,r)}$ compact"
removed and \eqref{eq:X-continuous-conservative} assumed.
Let $(x,r)\in(U\setminus N)\times(0,R)$ satisfy $B(x,r)\subset U$.
We assume in the former case that
$\overline{B(x,r)}$ is compact, while not in the latter case. We claim that
\begin{equation}\label{eq:exit-time-probability-claim}
\mathbb{P}_{x}[\tau_{B(x,r)}\leq{\textstyle\frac{1}{2}}c_{\mathrm{E}}^{-1}\Psi(r)]
	\leq 1-(c_{\mathrm{E}}^{2}c_{\Psi}2^{\beta_{2}+1})^{-1},
\end{equation}
which together with the implication \textup{(2)}$\Rightarrow$\textup{(6)}
of Theorem \ref{thm:exit-probability} shows the assertions.

To see \eqref{eq:exit-time-probability-claim} we follow \cite[Proof of Lemma 3.16]{Bar}.
Set $\tau:=\tau_{B(x,r)}$ and $t:=\frac{1}{2}c_{\mathrm{E}}^{-1}\Psi(r)$.
By using \eqref{eq:exit-time-lower}, the obvious relation
$\tau\leq t+(\tau-t)\ind{\{t<\tau\}}=t+(\tau\circ\theta_{t})\ind{\{t<\tau\}}$
and the Markov property \cite[Theorem A.1.21]{CF} of $X$ at time $t$, we have
\begin{equation}\label{eq:exit-time-probability-proof}
2t=c_{\mathrm{E}}^{-1}\Psi(r)\leq\mathbb{E}_{x}[\tau]
	\leq t+\mathbb{E}_{x}[(\tau\circ\theta_{t})\ind{\{t<\tau\}}]
	=t+\mathbb{E}_{x}\bigl[\ind{\{t<\tau\}}\mathbb{E}_{X_{t}}[\tau]\bigr].
\end{equation}
Note that $X_{t}\in B(x,r)$ on $\{t<\tau\}$, that
$\mathbb{P}_{x}[X_{t}\in N]=0$ by \eqref{eq:properly-exceptional},
and that for any $y\in B(x,r)\setminus N$,
$\tau\leq\tau_{B(y,2r)}$ by $B(x,r)\subset B(y,2r)$ and hence
$\mathbb{E}_{y}[\tau]\leq\mathbb{E}_{y}[\tau_{B(y,2r)}]
	\leq c_{\mathrm{E}}\Psi(2r)\leq c_{\mathrm{E}}c_{\Psi}2^{\beta_{2}}\Psi(r)$
by \eqref{eq:exit-time-upper} and \eqref{eq:upper-bound-function-Psi}.
It follows from \eqref{eq:exit-time-probability-proof} and these facts that
\begin{equation*}
t\leq\mathbb{E}_{x}\bigl[\ind{\{t<\tau\}}\mathbb{E}_{X_{t}}[\tau]\bigr]
	=\mathbb{E}_{x}\bigl[\ind{\{t<\tau,\,X_{t}\in B(x,r)\setminus N\}}\mathbb{E}_{X_{t}}[\tau]\bigr]
	\leq c_{\mathrm{E}}c_{\Psi}2^{\beta_{2}}\Psi(r)(1-\mathbb{P}_{x}[\tau\leq t]),
\end{equation*}
which immediately implies \eqref{eq:exit-time-probability-claim}.
\end{proof}

\begin{acknowledgements}
Essential parts of this paper were written while the second named author was
visiting the University of Bonn from March to September 2014. He would like to
thank Kobe University for its financial and administrative supports for his visit.
He also would like to express his deepest gratitude toward the members of the
stochastics research groups of the University of Bonn for their heartfelt hospitality.

The authors would like to thank Dr.\ Sebastian Andres and Dr.\ Kouji Yano
for their valuable comments on earlier versions of the manuscript.
\end{acknowledgements}

\end{document}